\newtheorem{definition}{Definition}[section]
\newtheorem{lemma}[definition]{Lemma}
\newtheorem{proposition}[definition]{Proposition}
\newtheorem{question}[definition]{Question}
\newtheorem{theorem}[definition]{Theorem}
\newtheorem{remark}[definition]{Remark}
\theoremstyle{definition}
\newtheorem{example}[definition]{Example}
\newcommand{\N}{\mathbb{N}}
\newcommand{\Z}{\mathbb{Z}}
\newcommand{\D}{\mathbb{D}}
\newcommand{\0}{\mathtt{0}}
\newcommand{\1}{\mathtt{1}}
\newcommand{\2}{\mathtt{2}}
\newcommand{\ttd}{\mathtt{d}}
\newcommand{\Zp}{{\mathbb{Z}_{\geq 0}}}
\newcommand{\Zn}{{\mathbb{Z}_{< 0}}}
\newcommand{\Wmin}{{\mathtt{W}_{\min}}}
\newcommand{\Wmax}{{\mathtt{W}_{\max}}}
\newcommand{\Acal}{\mathcal{A}}
\newcommand{\Dcal}{\mathcal{D}}
\newcommand{\Fcal}{\mathcal{F}}
\newcommand{\Lcal}{\mathcal{L}}
\newcommand{\bn}{{\boldsymbol{n}}}
\DeclareMathOperator{\rep}{rep}
\DeclareMathOperator{\tail}{tail}
\DeclareMathOperator{\pad}{pad}
\DeclareMathOperator{\val}{val}
\DeclareMathOperator{\Per}{Per}
\newcommand{\valTwoC}{\val_{2c}}
\newcommand{\valFc}{\val_{\Fcal c}}
\newcommand{\repFc}{\rep_{\Fcal c}}
\begin{document}

\title{Dumont--Thomas complement numeration systems for $\Z$}

\author[S.~Labb\'e]{S\'ebastien Labb\'e}
\address[S.~Labb\'e]{Univ. Bordeaux, CNRS, Bordeaux INP, LaBRI, UMR 5800, F-33400 Talence, France}
\email{sebastien.labbe@labri.fr}
\urladdr{http://www.slabbe.org/}

\author[J.~Lep\v{s}ov\'a]{Jana Lep\v{s}ov\'a}
\address[J.~Lep\v{s}ov\'a]{FNSPE, CTU in Prague, Trojanova 13, 120 00 Praha, Czech Republic}
\email{jana.lepsova@labri.fr}

\makeatletter
\@namedef{subjclassname@2020}{\textup{2020} Mathematics Subject Classification}
\makeatother

\keywords{substitution \and numeration system \and automaton \and two's complement.}
\subjclass[2020]{Primary 11A63; Secondary 68Q45 \and 68R15 \and 37B10}


\maketitle

\begin{abstract}
We extend the well-known Dumont--Thomas numeration systems to $\mathbb{Z}$ using an approach inspired by the two's complement numeration system. Integers in $\mathbb{Z}$ are canonically represented by a finite word (starting with $\mathtt{0}$ when nonnegative and with $\mathtt{1}$ when negative). The systems are based on two-sided periodic points of substitutions as opposed to the right-sided fixed points. For every periodic point of a~substitution, we construct an automaton which returns the letter at position $n\in\mathbb{Z}$ of the periodic point when fed with the representation of $n$ in the corresponding numeration system. The numeration system naturally extends to $\mathbb{Z}^d$. We give an equivalent characterization of the numeration system in terms of a total order on a regular language. Lastly, using particular periodic points, we recover the well-known two's complement numeration system and the Fibonacci analogue of the two's complement numeration system.
\end{abstract}


\section{Introduction}

On a finite size memory representing unsigned integers with base-10 digits,
incrementing by 1 the largest representable number 
gives
\[
    \begin{array}{r}
    \mathtt{9999999999999}\\
                \mathtt{+1}\\
    \hline
    \mathtt{0000000000000}\\
    \end{array}
\]
if we ignore the overflow error caused by the propagation of the carry beyond the memory limit.
Therefore, it makes sense to identify the number $\mathtt{999\cdots 9}$ with
the value $-1$ since adding one to it gives zero.
Likewise, 
$\mathtt{999\cdots 98}$ can be identified with the value $-2$,
$\mathtt{999\cdots 97}$ with the value $-3$, and so on, just like negative $p$-adic integers
\cite{zbMATH07214196}.
This numeration system is called ten's-complement.
As instructively explained by Knuth \cite[\S 4.1]{MR3077153}, the same can be
done in an arbitrary integer base $b\geq2$. When $b=2$, it is called the two's
complement numeration system. This system is still used nowadays to
represent signed integers in the architecture of modern processors 
\cite[\S 4.2.1]{intel-dev-manual-2023} due to its efficiency at performing
arithmetic operations. 

In this article, we show that the concept of complement numeration systems goes
beyond numeration systems in an integer base. 
The theory of numeration systems studies and describes the various ways of
representing numbers (integers, real numbers, Gaussian integers, etc.) by sequences of digits 
\cite{MR942576,MR992303,MR1411227,MR1463527,MR1491655,MR2742574,rigo_formal_2014}.
One of these ways 
gives rise to the numeration systems based on substitutions which were proposed by Dumont
and Thomas \cite{MR1020484}.
The Dumont--Thomas numeration system associated with a substitution provides 
a~canonical representation for every nonnegative integer.
It may also be used to represent real numbers in a certain interval.
It turns out there exists a natural complement version of the Dumont--Thomas
numeration systems allowing to represent all integers in $\Z$ and not only
those that are nonnegative.

\begin{figure}[h]
\begin{center}
\begin{tikzpicture}[scale=1.6, >=latex]
    \node                       (O) at (.5,1.6) {};
    \node[above,rectangle,draw] (a) at (.5,1) {$a$};
    \node[above,rectangle,draw] (b) at (0,0) {$b$};
    \node[above,rectangle,draw] (c) at (1,0) {$c$};
    \draw[->] (O) to (a);
    \draw[->,loop left] (a) to node[left] {\scriptsize $\0$} (a);
    \draw[->] (a) to node[fill=white,inner sep=2pt] {\scriptsize $\1$} (b);
    \draw[->,bend left=15] (a) to node[fill=white,inner sep=2pt] {\scriptsize $\2$} (c);
    \draw[->] (b) to node[fill=white,inner sep=2pt] {\scriptsize $\0$} (c);
    \draw[->,bend left=15] (c) to node[fill=white,inner sep=2pt] {\scriptsize $\0$} (a);
    \draw[->,loop right] (c) to node[right] {\scriptsize $\1$} (c);
\end{tikzpicture}
\end{center}
    \caption{The graph associated to the substitution $a\mapsto abc, b\mapsto c, c\mapsto ac$.}
    \label{fig:intro-graph}
\end{figure}

In practical terms, the Dumont--Thomas numeration system can be defined by the set of
finite paths in a directed graph starting from some fixed vertex. 
For example, consider the directed graph shown in Figure~\ref{fig:intro-graph}
with vertices $a$, $b$ and $c$ where the outgoing edges of every vertex are
labeled with consecutive nonnegative integers starting with zero.
The set of paths of fixed length starting with some chosen vertex can be unfolded into a tree,
see Figure~\ref{fig:intro-tree} (left). 
A path in the tree is uniquely identified with the sequence of labels of its
edges starting from the root. Among the set of paths of a given length ordered lexicographically,
the $n$-th one can be regarded as a representation of the nonnegative
integer $n$. Considering arbitrarily long finite paths starting from the initial vertex in the
directed graph, we obtain a canonical representation of all nonnegative integers after
removing leading zeros in their representation (assuming the initial vertex has a
loop labeled with $\mathtt{0}$); see
Figure~\ref{fig:intro-tree} (right).
\begin{figure}[h]
\begin{minipage}{0.4\textwidth}
\begin{center}
\begin{tikzpicture}[xscale=.75, yscale=1.5, >=latex]
    \node                 (O)   at (0,.5) {};
    \node[rectangle,draw] (00a) at (0,0)  {$a$};
    \node[rectangle,draw] (10a) at (0,-1) {$a$};
    \node[rectangle,draw] (11b) at (1,-1) {$b$};
    \node[rectangle,draw] (12c) at (2,-1) {$c$};
    \node[rectangle,draw] (20a) at (0,-2) {$a$};
    \node[rectangle,draw] (21b) at (1,-2) {$b$};
    \node[rectangle,draw] (22c) at (2,-2) {$c$};
    \node[rectangle,draw] (23c) at (3,-2) {$c$};
    \node[rectangle,draw] (24a) at (4,-2) {$a$};
    \node[rectangle,draw] (25c) at (5,-2) {$c$};
    \node at (0,-2.5) {$0$};
    \node at (1,-2.5) {$1$};
    \node at (2,-2.5) {$2$};
    \node at (3,-2.5) {$3$};
    \node at (4,-2.5) {$4$};
    \node at (5,-2.5) {$5$};
    \draw[->] (O) to (00a);
    \draw[->] (00a) -- node[fill=white,inner sep=2pt] {\scriptsize $\0$} (10a);
    \draw[->] (00a) -- node[fill=white,inner sep=2pt] {\scriptsize $\1$} (11b);
    \draw[->] (00a) -- node[fill=white,inner sep=2pt] {\scriptsize $\2$} (12c);
    \draw[->] (10a) -- node[fill=white,inner sep=2pt] {\scriptsize $\0$} (20a);
    \draw[->] (10a) -- node[fill=white,inner sep=2pt] {\scriptsize $\1$} (21b);
    \draw[->] (10a) -- node[fill=white,inner sep=2pt] {\scriptsize $\2$} (22c);
    \draw[->] (11b) -- node[fill=white,inner sep=2pt] {\scriptsize $\0$} (23c);
    \draw[->] (12c) -- node[fill=white,inner sep=2pt] {\scriptsize $\0$} (24a);
    \draw[->] (12c) -- node[fill=white,inner sep=2pt] {\scriptsize $\1$} (25c);
\end{tikzpicture}
    \end{center}
    \end{minipage}
    \begin{minipage}{0.4\textwidth}
        \begin{center}
        \begin{tabular}{|c|c|c|}
                       $n$ & path & representation of $n$\\ \hline
                        0  & $\mathtt{00}$          & $\varepsilon$          \\
                        1  & $\mathtt{01}$          & $\mathtt{1}$           \\
                        2  & $\mathtt{02}$          & $\mathtt{2}$           \\
                        3  & $\mathtt{10}$          & $\mathtt{10}$          \\
                        4  & $\mathtt{20}$          & $\mathtt{20}$          \\
                        5  & $\mathtt{21}$          & $\mathtt{21}$          \\
        \end{tabular}
        \end{center}
    \end{minipage}
    \caption{The set of paths starting in state $a$ in the directed graph
    provide a~canonical representation of the nonnegative integers after
    removing leading zeroes.
    }
    \label{fig:intro-tree}
\end{figure}
We refer to such a numeration system as to the Dumont--Thomas numeration system for~$\N$
associated with the substitution $\mu:a\mapsto abc, b\mapsto c, c\mapsto ac$. 
The directed graph shown in Figure~\ref{fig:intro-tree} 
(as well as the automaton shown in Figure~\ref{fig:intro-graph})
is derived from the substitution $\mu$ following a well-known construction for automatic sequences \cite{MR1997038}: 
$\alpha\xrightarrow{i}\beta$ is an edge of the graph if and only if
$\beta$ is the $i$-th letter of the image of the letter $\alpha$, for every integer $i$ such that $0\leq i < \ell$ where $\ell$ is the length of the image of the letter $\alpha$.
Among other properties, this numeration system gives a direct description of
the right-infinite fixed point $\textbf{t} = \mu(\textbf{t})=abccacacabc\ldots$
of the substitution $\mu$ as an automatic sequence.

The Dumont--Thomas numeration systems were later explained using the
so-called prefix-suffix automata associated with primitive substitutions
\cite{MR1879663} by considering the cylinders of finite length words
\cite[Corollary 6.2]{MR1879663}.
The main motivation of Canterini and Siegel was
to prove that every dynamical system
generated by a substitution of Pisot type on $d$ letters admits 
a~minimal translation on the torus $\mathbb{T}^{d-1}$
as a
topological factor 
\cite{MR1852097}. As a consequence, 
they obtained a numeration system representing the
elements of $\mathbb{T}^{d-1}$ by infinite paths in a prefix-suffix automaton;
see \cite{MR1970385}.

In more generality, every regular language over a totally ordered alphabet
leads to what is called an \emph{abstract numeration system},
which may be used to represent nonnegative integers \cite{MR1799066}
or real numbers in an interval \cite{zbMATH01748010},
see also \cite[\S 7]{MR1905123}, \cite[\S 4]{MR2290784}, 
\cite{MR2282862}, \cite[\S 3]{MR2742574}, \cite{MR2799274}.

Recently, a~very general framework was proposed to 
extend the Dumont--Thomas numeration systems to all integers
based on the notion of coding prescription, which allows the image of
letters to be scattered words of nonconsecutive letters \cite{MR3805464}.
Another recent article extending these numeration systems concerns also
the $\beta$-numeration of real numbers in an interval \cite{MR4143682}.

\textbf{In this contribution.}
The extension of the Dumont--Thomas
numeration systems to all integers in $\Z$ that we propose
is inspired by integer base complement numeration systems; see
Definition~\ref{def:rep-not-prolongable}.
It is derived from the two-sided periodic points of substitutions
as opposed to the right-infinite fixed points. 
In a Dumont--Thomas complement numeration system, the representations of nonnegative integers
start with the digit $\0$ whereas the 
representations of negative integers start with the digit $\1$.
The proofs provided here follow as much as possible the approach originally
proposed by Dumont and Thomas \cite{MR1020484}.

The main results of this contribution are Theorem~\ref{th:periodic_point_automatic},
where we prove that two-sided periodic points of substitutions are automatic sequences
with respect to the Dumont--Thomas complement numeration systems for $\Z$,
and Theorem~\ref{thm:dumont-thomas-characterization-increasing},
where we characterize these numeration systems
by means 
of a total order
on the language recognized by an automaton.
Finally, we show that the well-known two's complement numeration system can be
constructed as a Dumont--Thomas complement numeration system for $\Z$ 
(Proposition~\ref{prop:rep_equality_2c})
and similarly for the
Fibonacci analogue of the two's complement numeration system
(Proposition~\ref{prop:rep_equality}).

Also, we extend the Dumont--Thomas complement numeration systems to $\Z^d$; see
Definition~\ref{def:num-sys-Zd}.
The need for extending the theory of numeration systems based on substitutions from $\N$
to $\Z$ and to $\Z^d$ for $d\geq 2$ was motivated by the study of aperiodic Wang tilings of the
plane. In \cite{MR4364231}, configurations in a particular aperiodic Wang shift
based on 16 Wang tiles were described by an automaton derived from a
two-dimensional substitution. The automaton takes as input the representation of a
position in $\Z^2$ using a Fibonacci analogue of the two's complement
numeration system and outputs the index of the Wang tile to place at this
position. 
This example belongs to a family of the Dumont--Thomas complement numeration systems for $\Z^2$.

The authors believe further extensions beyond Dumont--Thomas based on a single
substitution can be expected including $S$-adic sequences \cite{zbMATH01019492}.
For instance, 
in a Bratteli--Vershik diagram \cite{MR1194074,MR1322556,MR3791491},
one may think of the maximal path in the diagram as a representation of $-1$
and the minimal path as a representation of $0$.
The representation of the other negative and nonnegative integers can be deduced from 
the order of a Bratteli--Vershik
diagram and its natural successor map.

\subsection*{Structure of the article}
Preliminaries and notation are presented in Section~\ref{sec:prelim}.
Section~\ref{sec:dumont-thomas-N}
recalls numeration systems for $\N$ defined by Dumont and Thomas
and presents some extensions of their results.
In Section~\ref{sec:dumont-thomas-Z}, 
we extend a theorem of Dumont and Thomas to the
right-infinite and left-infinite periodic
points of substitutions.
We use it to define numeration systems for $\Z$
based on the two-sided periodic points of substitutions.
In Section~\ref{sec:examples}, we show some examples.
In Section~\ref{sec:periodic-as-automatic}, we describe periodic points of substitutions
as automatic sequences.
In Section~\ref{sec:NS-for-Zd}, 
we show how to extend the Dumont--Thomas numeration systems to $\Z^d$.
In Section~\ref{sec:total-order}, we present a total order
on $\{\0,\1\}\odot\Dcal^*$, where $\Dcal$ is some alphabet of integers
and $\odot$ is the concatenation of words within the monoid $\Dcal^*$.
We characterize the Dumont--Thomas complement numeration systems for $\Z$ with
respect to this total order.  In Section~\ref{sec:known-complement-NS}, we show
that the
well-known two's complement numeration system is an instance of a
Dumont--Thomas numeration system for $\Z$ and similarly for the Fibonacci
analogue of the two's complement numeration system.


\section{Preliminaries}\label{sec:prelim}

An \emph{alphabet} $A$ is a finite set and its elements $a\in A$
are called $\emph{letters}$. 
A \emph{finite word} $u = u_0 u_1 \cdots u_{n-1}$ is a concatenation of letters 
$u_i \in A$ for every $i\in \{0,1,\dots,n-1\}$ and $|u|$ denotes its \emph{length}.
When it is more convenient, we denote the $i$-th letter of $u$
by $u[i]$ instead of $u_i$.
The \emph{empty word} is denoted by $\varepsilon$.
The set of all finite words over the alphabet $A$ is denoted by $A^*$ 
and the set of all nonempty words over the alphabet $A$ is denoted by
$A^+ = A^*\setminus \{\varepsilon\}$.
We define the concatenation $\odot$ as the following binary operation:
\[
    \odot:A^*\times A^*\to A^*, u\odot v\mapsto uv.
\]
The set $A^*$ with the concatenation as operation
forms a monoid with $\varepsilon$ as the neutral element.

A \emph{morphism} over $A$ is a map $\eta:A^* \to A^*$ 
such that $\eta(u\odot v) = \eta(u)\odot\eta(v)$  for all words $u, v \in A^*$.
A \emph{substitution} $\eta:A^* \to A^*$ is a morphism such that 
$\eta(a)\in A^+$ is nonempty
for every $a\in A$
and there exists $a\in A$ such that $a$ is
\emph{growing}, that is,
$\lim_{k\to +\infty} |\eta^k(a)| = +\infty$.
A~morphism $\eta$ is said \emph{primitive} if 
there exists $k\in\N$ such that
for every $a,b\in A$ 
the letter $a$ appears in $\eta^k(b)$.
A~morphism $\eta$ is said \emph{$d$-uniform} for some nonnegative integer $d$
if $|\eta(a)|=d$ for every letter $a\in A$.

We call $u_0 u_1 u_2 \cdots \in A^\Zp$ a \emph{right-infinite word}
and $ \cdots  u_{-3} u_{-2} u_{-1} \in A^{\Zn}$ a \emph{left-infinite word}.
We call $u\in A^{\Z}$ a \emph{two-sided word} and
we separate by a vertical bar its elements $u_{-1}$ and $u_0$ to indicate the origin, i.e.,
$u = \cdots u_{-3} u_{-2} u_{-1} | u_{0} u_1 u_2 \cdots$.

Substitutions can be applied naturally to two-sided words $u\in A^{\Z}$ by setting
\[
\eta(\dots u_{-3} u_{-2} u_{-1} | u_0 u_1 u_2 \cdots) 
= \cdots\eta(u_{-3}) \eta(u_{-2}) \eta(u_{-1}) |\eta(u_0) \eta(u_1) \eta(u_2) \cdots.
\]
Let $\D\in\{\Z,\Zp,\Zn\}$.
A word $u\in A^\D$ is called a \emph{periodic point} of the substitution~$\eta$
if there exists an integer $p\geq 1$ such that $\eta^p(u)=u$,
and in this case, $p$ is called a \emph{period} of the periodic point.
The minimum integer $p\geq1$ such that $\eta^p(u)=u$ is called \emph{the} period of $u$.
A periodic point with period $p=1$ is called a \emph{fixed point} of $\eta$.
The set of periodic points of $\eta$ is denoted by
$\Per_\D(\eta)=\{u\in A^\D \mid \eta^p(u)=u\text{ for some }p\geq1\}$.
Since we are mostly interested in two-sided words in this contribution,
we omit the domain when $\D=\Z$ and we write
$\Per(\eta)=\Per_\Z(\eta)$.

If $u\in\Per(\eta)$ is a two-sided periodic point of a substitution $\eta$,
then we say that the pair of letters $u_{-1}|u_0$ is the \emph{seed} of $u$, 
see \cite[\S4.1]{MR3136260}.
If the seed letters of a two-sided periodic point are growing, then the
periodic point is defined entirely by its seed.
More precisely,
$u = \lim_{k\to +\infty} \eta^{pk}(u_{-1})|\eta^{pk}(u_0)$,
where $p$ is a period of $u$.


Let $u=u_0 u_1 \cdots\in\Per_\N(\eta)$ with a seed $u_0=a$.
The previous terminology is inspired by \cite{MR1879663},
where a prefix-suffix automaton is associated with $\eta$.
However, for our goal an automaton associated with $\eta$
as in \cite{MR2742574} is sufficient.
Let $\Dcal$ denote the alphabet
$\Dcal=\left\{\0,...,\max_{c\in A}|\eta(c)|-1\right\}$
whose elements are integers.
The set $\Dcal^*$ is a monoid for the operation $\odot$ of concatenation.
The deterministic finite automaton with output (DFAO)
associated to the substitution $\eta$ and letter $a$ is
the 5-tuple
$\Acal_{\eta,a} = (A,\Dcal,\delta,a,A)$,
where the transition function $\delta:A\times \Dcal \rightarrow A$ 
is a partial function 
such that $\delta(b,i)=c$ 
if and only if 
$c=w_i$ and $\eta(b)=w_0\dots w_{|\eta(b)|-1}$. 
The transition function $\delta$ is naturally extended to $A\times\Dcal^*$
by $\delta(b,\varepsilon) = b$ for every $b\in A$,
and, for every $b\in A$, $i\in\Dcal$ and $w\in \Dcal^*$,
$\delta(b,i\odot w) = \delta(\delta(b,i),w)$.
For some state $b \in A$ and word $w\in \Dcal^*$, we denote
$\Acal_{\eta,a}(b,w) = \delta(b,w)$. In particular, we denote
$\Acal_{\eta,a}(w) = \delta(a,w)$.
We let  $\Lcal(\Acal_{\eta,a})$ denote the words
accepted by the automaton $\Acal_{\eta,a}$
and for $q\in\N$ we denote $\Lcal_q(\Acal_{\eta,a})$
the set of words $w\in\Lcal(\Acal_{\eta,a})$ such that $|w|=q$.

\section{Dumont--Thomas numeration system for $\N$}\label{sec:dumont-thomas-N}

In this section, we recall Dumont--Thomas numeration system for $\N$, which was 
based on  substitutions having a right-infinite fixed point \cite{MR1020484}. 
It uses the definition of admissible sequences.

\begin{definition}[admissible sequence]
{\rm\cite{MR1020484}}
Let $\eta:A^*\to A^*$ be a substitution. 
Let $a\in A$ be a letter, $k$ an integer and, for each integer $i$, $0\leq i\leq k$,
$(m_i,a_i)$ be an element of $A^*\times A$. We say that the finite sequence 
$(m_i,a_i)_{i=0,\dots,k}$ is \emph{admissible with respect to $\eta$} if and only if, for all $i$, 
$1\leq i\leq k$,
$m_{i-1}a_{i-1}$ is a prefix of $\eta(a_i)$. We say that this sequence is
\emph{$a$-admissible with respect to $\eta$} if it is admissible with respect to $\eta$ and, moreover, $m_ka_k$ is a prefix of
$\eta(a)$.
\end{definition}

As done in \cite{MR1020484}, when the substitution is clear from the
context, we write that a sequence is \emph{admissible} or \emph{$a$-admissible} without
specifying the substitution.


Dumont and Thomas proved the following result, which we rewrite in our notation.

\begin{theorem}\label{thm:dumont-thomas}
    {\rm\cite[Theorem 1.5]{MR1020484}}
    Let $a\in A$ and let $\eta:A^*\to A^*$ be a substitution. 
    Let $u=\eta(u)$ be a right-infinite fixed point of $\eta$ with growing seed $u_0 = a$.
    For every integer $n\geq1$,
    there exists a unique integer $k=k(n)$ and a unique sequence
    $(m_i,a_i)_{i=0,\dots,k}$ such that
    \begin{itemize}
        \item this sequence is $a$-admissible and $m_k\neq\varepsilon$, 
        \item $u_0u_1\cdots u_{n-1} = \eta^k(m_k) \eta^{k-1}(m_{k-1}) \cdots \eta^0(m_0)$.
    \end{itemize}
\end{theorem}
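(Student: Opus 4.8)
The plan is to prove existence and uniqueness of the $a$-admissible decomposition by induction on the structure of the substitution images, mimicking the greedy/prefix-suffix mechanism that underlies Dumont--Thomas. Fix the right-infinite fixed point $u=\eta(u)$ with growing seed $u_0=a$, and let $n\geq1$. The key observation is that since $a$ is growing, the lengths $|\eta^k(a)|$ tend to infinity, so there is a well-defined largest $k$ with $|\eta^k(a)|\leq n-1 < |\eta^{k+1}(a)|$ (or a suitable boundary variant when $n$ is small); this $k$ will be the integer $k(n)$ in the statement, and $m_k\neq\varepsilon$ will follow from the strict inequality on the right.

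\medskip

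\textbf{Existence.} I would argue by a descending construction on the level $k$. Since $u=\eta(u)$, the prefix $u_0\cdots u_{n-1}$ is a prefix of $\eta^{k+1}(a)=\eta^k(\eta(a))$. Writing $\eta(a)=b_0b_1\cdots$, the word $\eta^{k+1}(a)$ factors as $\eta^k(b_0)\eta^k(b_1)\cdots$, so there is a unique index at which the length-$(n-1)$ prefix falls: one peels off a maximal block $\eta^k(m_k)$ with $m_ka_k$ a prefix of $\eta(a)$, leaving a remainder that is a proper prefix of $\eta^k(a_k)=\eta^{k-1}(\eta(a_k))$. One then recurses at level $k-1$ with the letter $a_k$ in the role of $a$, producing $(m_{k-1},a_{k-1})$ with $m_{k-1}a_{k-1}$ a prefix of $\eta(a_k)$, and so on down to level $0$. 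Each step shortens the remaining word's ``level'' by one, so the recursion terminates after exactly $k+1$ steps, yielding the factorization $u_0\cdots u_{n-1}=\eta^k(m_k)\cdots\eta^0(m_0)$ together with the admissibility conditions $m_{i-1}a_{i-1}$ prefix of $\eta(a_i)$ for each $i$ and $m_ka_k$ prefix of $\eta(a)$.

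\medskip

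\textbf{Uniqueness.} Suppose two $a$-admissible sequences $(m_i,a_i)$ and $(m_i',a_i')$, each with nonempty top coefficient, both decompose $u_0\cdots u_{n-1}$. I would first show the top length $k$ is forced: the condition $m_k\neq\varepsilon$ together with $a$-admissibility forces $|\eta^k(a)|\leq n-1<|\eta^{k+1}(a)|$, which pins down $k$ uniquely via the growth of $a$. Given equal top level, I compare the leading blocks: because $\eta^k(m_k)$ and $\eta^k(m_k')$ are both prefixes of the same word $u_0\cdots u_{n-1}$ obtained by applying $\eta^k$ to a prefix of $\eta(a)$, and since applying $\eta^k$ to distinct prefixes of $\eta(a)$ yields words of distinct lengths, we get $m_k=m_k'$ and hence $a_k=a_k'$ (the letter immediately following $m_k$ in $\eta(a)$). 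Stripping off this common block and descending to level $k-1$, the same argument applies to the remainder, giving $m_i=m_i'$ and $a_i=a_i'$ for all $i$ by downward induction.

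\medskip

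The main obstacle I anticipate is the bookkeeping at the boundary between levels, namely verifying that the remainder after peeling off $\eta^k(m_k)$ is genuinely a prefix of $\eta^{k-1}(\eta(a_k))$ of the correct (strictly smaller) effective length so that the recursion is well-posed and terminates at level $0$ with $m_0a_0$ a prefix of $\eta(a_1)$. This rests on the semigroup identity $\eta^{k}=\eta^{k-1}\circ\eta$ applied blockwise and on the injectivity of $w\mapsto|\eta^k(w)|$ restricted to prefixes of a fixed word; the latter is what makes both the block boundary and the digit $a_i$ uniquely determined at each level. Once these length-monotonicity facts are isolated as a short lemma, existence and uniqueness follow from essentially the same descending induction, and the hypothesis that $a$ is growing is exactly what guarantees the top level $k=k(n)$ exists and is finite for every $n\geq1$.
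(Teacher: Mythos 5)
Your overall strategy is the same as the paper's (and Dumont--Thomas's original): the paper does not reprove this theorem but cites it and records the three lemmas its proof rests on, namely Lemma~\ref{lem:admissible-sequence-sum} (the tail bound $\sum_{j=0}^{k}|\eta^j(m_j)|<|\eta^k(m_ka_k)|$), Lemma~\ref{lem:unique_admissible_sequence} (uniqueness) and Lemma~\ref{lem:prefix_recursion} (the peeling step); your descending construction for existence is exactly Lemma~\ref{lem:prefix_recursion} iterated, as in the paper's Lemma~\ref{lem:when-m-prefix-of-eta-p-a}, and your determination of $k$ by the growth of $|\eta^k(a)|$ matches the paper's argument in Theorem~\ref{thm:dumont-thomas-right-p}. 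Two details: the correct bracketing is $|\eta^k(a)|\le n<|\eta^{k+1}(a)|$ (not $n-1$), and the lower bound uses that $\eta(a)$ begins with $a$ (forced by $u=\eta(u)$ and $u_0=a$), so that any nonempty prefix $m_k$ of $\eta(a)$ gives $|\eta^k(m_k)|\ge|\eta^k(a)|$.

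The one genuine gap is in your uniqueness argument. You claim $m_k=m_k'$ ``because $\eta^k(m_k)$ and $\eta^k(m_k')$ are both prefixes of the same word and distinct prefixes of $\eta(a)$ have images of distinct lengths.'' That is a non sequitur: two prefixes of the same word of different lengths are still both prefixes, and a priori the two decompositions could place the top cut at different positions with the lower-level terms compensating so that the total lengths agree. What rules this out is precisely Lemma~\ref{lem:admissible-sequence-sum} applied to the truncated sequence $(m_i,a_i)_{i=0,\dots,k-1}$: it gives $\sum_{j=0}^{k-1}|\eta^j(m_j)|<|\eta^{k-1}(m_{k-1}a_{k-1})|\le|\eta^k(a_k)|$, so $n$ lies in the interval $[\,|\eta^k(m_k)|,\,|\eta^k(m_ka_k)|\,)$, and these intervals are pairwise disjoint as $m_k$ ranges over the prefixes of $\eta(a)$; hence $n$ determines $m_k$, and then $a_k$, after which you may strip the block and descend. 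The ``injectivity of $w\mapsto|\eta^k(w)|$ on prefixes'' that you isolate at the end is necessary but not sufficient here; the sandwich bound from Lemma~\ref{lem:admissible-sequence-sum} is the missing ingredient, and it is the same bound you implicitly use in the existence half when you say the remainder is a proper prefix of $\eta^k(a_k)$.
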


The proof of the above theorem was based on the following lemmas,
which we cite here as we need them in what follows.

\begin{lemma}\label{lem:admissible-sequence-sum}
    {\rm\cite[Lemma 1.1]{MR1020484}}
    Let $\eta:A^*\to A^*$ be a substitution and
    $k\geq0$ be an integer. If 
    $(m_i,a_i)_{i=0,\dots,k}$ is an admissible sequence, then 
    \[
        \sum_{j=0}^k |\eta^j(m_j)| < |\eta^k(m_k a_k)|.
    \]
\end{lemma}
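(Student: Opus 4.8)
The natural approach is induction on $k\geq 0$, where the prefix condition defining admissibility is used precisely once, at the top level, to convert a statement about words into a statement about lengths under $\eta^{k-1}$.

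For the base case $k=0$, the sequence reduces to a single pair $(m_0,a_0)$ and the claimed inequality is $|\eta^0(m_0)| < |\eta^0(m_0a_0)|$, i.e.\ $|m_0| < |m_0 a_0| = |m_0|+1$, which holds because $a_0$ is a single letter. For the inductive step, I would assume the inequality for admissible sequences of length $k$ and take an admissible sequence $(m_i,a_i)_{i=0,\dots,k}$. Its truncation $(m_i,a_i)_{i=0,\dots,k-1}$ is again admissible, so the induction hypothesis gives
\[
    \sum_{j=0}^{k-1} |\eta^j(m_j)| < |\eta^{k-1}(m_{k-1}a_{k-1})|.
\]
Adding $|\eta^k(m_k)|$ to both sides yields
\[
    \sum_{j=0}^{k} |\eta^j(m_j)| < |\eta^{k-1}(m_{k-1}a_{k-1})| + |\eta^k(m_k)|.
\]

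The crux of the argument is bounding the first term on the right. By the admissibility of $(m_i,a_i)_{i=0,\dots,k}$ at index $k$, the word $m_{k-1}a_{k-1}$ is a prefix of $\eta(a_k)$, so $\eta(a_k) = m_{k-1}a_{k-1}\,s$ for some suffix $s\in A^*$. Applying the morphism $\eta^{k-1}$ and using that $\eta^{k-1}$ preserves concatenation, I obtain $\eta^k(a_k) = \eta^{k-1}(m_{k-1}a_{k-1})\,\eta^{k-1}(s)$, whence $|\eta^{k-1}(m_{k-1}a_{k-1})| \leq |\eta^k(a_k)|$. Substituting this bound and using that $\eta^k$ is a morphism to write $|\eta^k(m_k)| + |\eta^k(a_k)| = |\eta^k(m_ka_k)|$ gives exactly
\[
    \sum_{j=0}^{k} |\eta^j(m_j)| < |\eta^k(a_k)| + |\eta^k(m_k)| = |\eta^k(m_ka_k)|,
\]
completing the induction.

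I expect no real obstacle here; the only point requiring care is the step where the prefix relation is pushed through $\eta^{k-1}$, since one must invoke that $\eta^{k-1}$ is a morphism (so that applying it to $m_{k-1}a_{k-1}\,s$ splits as a concatenation) and not merely that lengths behave multiplicatively. It is worth noting that the strictness of the final inequality comes entirely from the base case (the extra letter $a_0$); at every inductive level the prefix bound $|\eta^{k-1}(m_{k-1}a_{k-1})| \leq |\eta^k(a_k)|$ is only required to be non-strict, which is what makes the bookkeeping clean.
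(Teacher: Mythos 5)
Your proof is correct: the induction is sound, the base case supplies the strict inequality via the extra letter $a_0$, and the prefix condition at index $k$ is correctly pushed through the morphism $\eta^{k-1}$ to get $|\eta^{k-1}(m_{k-1}a_{k-1})| \leq |\eta^k(a_k)|$. The paper itself states this lemma without proof, citing Dumont--Thomas directly, and your argument is essentially the standard one from that source, so there is nothing to reconcile.
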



\begin{lemma}\label{lem:unique_admissible_sequence}
    {\rm\cite[Lemma 1.3]{MR1020484}}
    Let $\eta:A^*\to A^*$ be a substitution and
    $k\geq0$ be an integer. Let $b\in A$,
    $(m_i,a_i)_{i=0,\dots,k}$ and
    $(m'_i,a'_i)_{i=0,\dots,k}$ be two $b$-admissible sequences
    and $n$ be an integer such that
    \[
        n = \sum_{j=0}^k |\eta^j(m_j)| = \sum_{j=0}^{k} |\eta^j(m'_j)|.
    \]
    Then for every $i$, $0\leq i \leq k$, we have $(m_i,a_i) = (m'_i, a'_i)$.
\end{lemma}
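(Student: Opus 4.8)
The plan is to prove Lemma~\ref{lem:unique_admissible_sequence} by induction on $k$, following the structure of the admissibility condition, which already encodes a recursive relationship between consecutive pairs $(m_i, a_i)$. The key observation is that both admissible sequences produce the same integer $n$ as a sum of lengths $\sum_{j=0}^k |\eta^j(m_j)|$, and I want to show this forces the sequences to coincide term by term. The main tool will be Lemma~\ref{lem:admissible-sequence-sum}, which provides the crucial strict inequality $\sum_{j=0}^{k'} |\eta^j(m_j)| < |\eta^{k'}(m_{k'} a_{k'})|$ for any truncated admissible subsequence; this inequality is what lets me isolate the top term.

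First I would handle the base case $k=0$. Here $n = |m_0| = |m'_0|$, and both $m_0 a_0$ and $m'_0 a'_0$ are prefixes of $\eta(b)$. Since $|m_0| = |m'_0|$, the words $m_0$ and $m'_0$ are prefixes of $\eta(b)$ of equal length, hence $m_0 = m'_0$; then $a_0$ and $a'_0$ are both the letter of $\eta(b)$ immediately following this common prefix, so $a_0 = a'_0$. For the inductive step, suppose the claim holds for sequences of length $k-1$. The idea is to first pin down the top pair $(m_k, a_k) = (m'_k, a'_k)$ and then reduce to a shorter admissible sequence. To extract the top term, I would rewrite $n = |\eta^k(m_k)| + \sum_{j=0}^{k-1} |\eta^j(m_j)|$ and use Lemma~\ref{lem:admissible-sequence-sum} applied to the truncation $(m_i,a_i)_{i=0,\dots,k-1}$, which gives $\sum_{j=0}^{k-1} |\eta^j(m_j)| < |\eta^{k-1}(m_{k-1} a_{k-1})| \leq |\eta^k(a_k)|$, where the last inequality holds because $m_{k-1} a_{k-1}$ is a prefix of $\eta(a_k)$. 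This shows that the number of full blocks $\eta^k(\cdot)$ contributed at the top level is determined by $n$ up to the single partial block $\eta^k(a_k)$.

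The heart of the argument, and the step I expect to be the main obstacle, is converting the length equality into an equality of the words $m_k$ and $m'_k$ via a careful counting argument on how many letters of $\eta^k(b)$ are consumed. Because $m_k a_k$ is a prefix of $\eta(b)$, the word $\eta^k(m_k a_k)$ is a prefix of $\eta^{k+1}(b)$, and the total contribution $\sum_{j=0}^k |\eta^j(m_j)|$ sits strictly between $|\eta^k(m_k)|$ and $|\eta^k(m_k a_k)|$. The bounds from Lemma~\ref{lem:admissible-sequence-sum} guarantee that $n$ lands in a half-open interval of the form $[\,|\eta^k(m_k)|,\ |\eta^k(m_k)| + |\eta^k(a_k)|\,)$, and these intervals, as $m_k$ ranges over prefixes of $\eta(b)$, are disjoint and consecutive. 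Consequently, the value of $n$ uniquely determines $m_k$; comparing with the primed sequence forces $m_k = m'_k$, and then $a_k = a'_k$ follows since both are the letter of $\eta(b)$ immediately after the common prefix $m_k$. The delicate part is verifying that these intervals tile correctly without gaps or overlaps, which is precisely where the strict inequality of Lemma~\ref{lem:admissible-sequence-sum} is indispensable.

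Once $(m_k, a_k) = (m'_k, a'_k)$ is established, I would subtract the identical top terms to get $\sum_{j=0}^{k-1} |\eta^j(m_j)| = \sum_{j=0}^{k-1} |\eta^j(m'_j)| =: n'$. The truncated sequences $(m_i,a_i)_{i=0,\dots,k-1}$ and $(m'_i,a'_i)_{i=0,\dots,k-1}$ are both $a_k$-admissible (respectively $a'_k$-admissible, but these agree), since admissibility of the full sequence requires $m_{k-1}a_{k-1}$ to be a prefix of $\eta(a_k)$. Thus the inductive hypothesis applies with $b$ replaced by $a_k$ and yields $(m_i,a_i) = (m'_i,a'_i)$ for all $0 \leq i \leq k-1$, completing the induction.
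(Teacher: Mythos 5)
Your proof is correct. Note that the paper does not prove this lemma itself --- it is quoted verbatim from Dumont--Thomas \cite[Lemma 1.3]{MR1020484} --- so there is no in-paper argument to compare against; your induction on $k$, isolating the top pair $(m_k,a_k)$ via the disjoint half-open intervals $[\,|\eta^k(m_k)|,\ |\eta^k(m_ka_k)|\,)$ furnished by Lemma~\ref{lem:admissible-sequence-sum}, is the standard argument and matches how the paper deploys that same inequality elsewhere (e.g.\ in Theorem~\ref{thm:dumont-thomas-right-p} and Lemma~\ref{lem:lexico-order-on-admissible-seq}).
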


\begin{lemma}\label{lem:prefix_recursion}
    {\rm\cite[Lemma 1.4]{MR1020484}}
    Let $\eta\colon A^*\to A^*$ be a substitution.
    Let $\ell\geq 1$ be an integer, $a\in A$ a letter and $m\in A^*$
    a proper prefix of the word $\eta^\ell(a)$.
    Then there exist $(m',a')\in A^*\times A$ and $m''\in A^*$
    such that $m'a'$ is a prefix of $\eta(a)$,
    $m''$ is a proper prefix of $\eta^{\ell-1}(a')$
    and $m = \eta^{\ell-1}(m') m''$.
\end{lemma}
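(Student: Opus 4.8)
The plan is to exploit the block structure that $\eta^{\ell-1}$ imposes on $\eta^\ell(a)$, and then to locate the block in which the prefix $m$ ends. Write $\eta(a)=c_0c_1\cdots c_{t-1}$ with each $c_i\in A$ and $t=|\eta(a)|\ge 1$; since $\eta^{\ell-1}$ is a morphism, this gives the factorisation
\[
    \eta^\ell(a)=\eta^{\ell-1}(c_0)\,\eta^{\ell-1}(c_1)\cdots\eta^{\ell-1}(c_{t-1}).
\]
Introducing the partial lengths $s_j=|\eta^{\ell-1}(c_0\cdots c_{j-1})|=\sum_{i=0}^{j-1}|\eta^{\ell-1}(c_i)|$ for $0\le j\le t$, we have $s_0=0$ and $s_t=|\eta^\ell(a)|$. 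Because $m$ is a \emph{proper} prefix we have $0\le|m|<s_t$, so there is a unique index $j$ with $0\le j\le t-1$ and $s_j\le|m|<s_{j+1}$.

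I would then set $m'=c_0\cdots c_{j-1}$, $a'=c_j$, and let $m''$ be $m$ with its first $s_j$ letters removed. The word $m'a'=c_0\cdots c_j$ is a prefix of $\eta(a)$ by construction, which is the first claim. Next, $\eta^{\ell-1}(m')$ is an initial concatenation of blocks, hence a prefix of $\eta^\ell(a)$, of length $s_j\le|m|$; since $m$ is also a prefix of $\eta^\ell(a)$, the shorter word $\eta^{\ell-1}(m')$ is a prefix of $m$, which justifies writing $m=\eta^{\ell-1}(m')\,m''$ and establishes the second claim.

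Finally, to see that $m''$ is a proper prefix of $\eta^{\ell-1}(a')=\eta^{\ell-1}(c_j)$, I would cancel the common prefix $\eta^{\ell-1}(m')$: both $m$ and $\eta^{\ell-1}(m')\,\eta^{\ell-1}(c_j)\cdots\eta^{\ell-1}(c_{t-1})=\eta^\ell(a)$ begin with $\eta^{\ell-1}(m')$, so $m''$ is a prefix of $\eta^{\ell-1}(c_j)\cdots\eta^{\ell-1}(c_{t-1})$. Combined with the length bound $|m''|=|m|-s_j<s_{j+1}-s_j=|\eta^{\ell-1}(c_j)|$ coming from the choice of $j$, this forces $m''$ to be a proper prefix of $\eta^{\ell-1}(a')$.

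The whole argument is essentially length bookkeeping, so I do not expect a serious obstacle; the only points needing care are that the half-open condition $s_j\le|m|<s_{j+1}$ determines $j$ uniquely and yields $j\le t-1$ precisely because $m$ is assumed \emph{proper}, and the degenerate case $\ell=1$, where $\eta^{\ell-1}$ is the identity, the only proper prefix of the single letter $a'$ is $\varepsilon$, and the construction correctly returns $m''=\varepsilon$ and $m=m'$.
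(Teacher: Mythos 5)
Your proof is correct. The paper does not reprove this lemma but simply cites it from Dumont--Thomas \cite[Lemma 1.4]{MR1020484}, and your block-decomposition argument (writing $\eta^\ell(a)$ as the concatenation of the blocks $\eta^{\ell-1}(c_i)$ and locating the block in which $m$ ends via the partial sums $s_j$) is the standard proof of that cited result; the length bookkeeping, including the use of properness to guarantee $j\le t-1$ and the nonerasing property of substitutions to make the $s_j$ strictly increasing, is all in order.
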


\subsection{Some extensions of Dumont--Thomas results}

In this subsection, we propose some extensions of Dumont--Thomas lemmas.
Firstly, we observe that admissible sequences are related to automata as follows.

\begin{lemma}\label{lemma:admissible-feed-into-automaton}
    Let $\eta:A^*\to A^*$ be a substitution,
    $k\geq1$ be an integer and $x\in A$.
    If $(m_i,a_i)_{i=0,\dots,k-1}$ is an $x$-admissible sequence,
    then 
    \[
        a_i=\Acal_{\eta,x}(|m_{k-1}|\odot|m_{k-2}|\odot\ldots\odot|m_i|) 
        \qquad \text{ for every } i=0,\dots,k-1.
    \]
\end{lemma}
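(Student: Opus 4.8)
The plan is to prove the identity by tracing the run of the automaton $\Acal_{\eta,x}$ on the input word $|m_{k-1}|\odot\cdots\odot|m_0|$, reading one length at a time and matching each transition against the prefix conditions built into admissibility. The key translation is that the definition of $\delta$ says precisely that $\delta(b,j)$ is the letter at position $j$ of $\eta(b)$; hence, whenever a word $m\,c$ with $c$ a single letter is a prefix of $\eta(b)$, the letter $c$ occurs at position $|m|$ of $\eta(b)$, so that $\delta(b,|m|)=c$. I would first remark that all inputs $|m_j|$ indeed lie in $\Dcal$: since $m_{i-1}a_{i-1}$ is a prefix of $\eta(a_i)$ we have $|m_{i-1}|<|\eta(a_i)|\leq\max_{c\in A}|\eta(c)|$, and likewise $|m_{k-1}|<|\eta(x)|$, so each length is at most $\max_{c\in A}|\eta(c)|-1$ and the transitions below are well defined.

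Adopting the convention $a_k:=x$, I would then record that the admissibility hypotheses collapse into a single family of transitions: for every $j$ with $0\leq j\leq k-1$ one has $\delta(a_{j+1},|m_j|)=a_j$. Indeed, for $0\leq j\leq k-2$ this is the admissibility condition that $m_ja_j$ is a prefix of $\eta(a_{j+1})$, combined with the observation above, while for $j=k-1$ it is the extra $x$-admissibility condition that $m_{k-1}a_{k-1}$ is a prefix of $\eta(x)=\eta(a_k)$.

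Next I would prove, by downward induction on $i$, the statement $\Acal_{\eta,x}(|m_{k-1}|\odot\cdots\odot|m_i|)=a_i$. The base case $i=k-1$ reads $\delta(x,|m_{k-1}|)=a_{k-1}$, which is exactly the $j=k-1$ instance recorded above. For the inductive step, assuming the identity for some $i$ with $1\leq i\leq k-1$, I split off the last input letter using the standard identity $\delta(b,w\odot j)=\delta(\delta(b,w),j)$ (an easy induction on $|w|$ from the given recursive definition of $\delta$), obtaining
\[
    \Acal_{\eta,x}(|m_{k-1}|\odot\cdots\odot|m_{i-1}|)
    = \delta\bigl(a_i,\,|m_{i-1}|\bigr)
    = a_{i-1},
\]
where the first equality uses the induction hypothesis and the second is the $j=i-1$ instance. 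This closes the induction and yields the claim for all $i=0,\dots,k-1$.

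There is no deep obstacle here: the argument is essentially bookkeeping, turning ``prefix of $\eta(\cdot)$'' into a single automaton transition and chaining these. The only points demanding care are the orientation conventions — the lengths are fed starting with $|m_{k-1}|$, so the induction must run downward from $i=k-1$ and one must peel off the final, not the initial, letter of the input — together with the boundary bookkeeping encoded by the convention $a_k=x$, which lets the two clauses in the definition of $x$-admissibility be treated uniformly. The extreme case $k=1$ is handled by the base case alone.
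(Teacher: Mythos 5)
Your proof is correct and follows essentially the same route as the paper: a downward induction on $i$ from $k-1$ to $0$, where each admissibility condition ``$m_j a_j$ is a prefix of $\eta(a_{j+1})$'' is translated into the single transition $\delta(a_{j+1},|m_j|)=a_j$ and these are chained by peeling off the last input letter. You are merely more explicit than the paper about two points it leaves implicit, namely that each $|m_j|$ lies in $\Dcal$ and that $\delta(b,w\odot j)=\delta(\delta(b,w),j)$ follows from the stated recursive definition of $\delta$.
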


\begin{remark}
    The notation $(\odot)$ in the above equation and the proof that follows
    stands for the concatenation of words within the monoid $\Dcal^*$. 
    Since the elements of $\Dcal$ are integers,
    we write this notation explicitly to avoid misinterpreting it with the
    multiplication of integers.
\end{remark}

\begin{proof}
    The proof is carried out by induction on $i$.
    If $i=k-1$, then
        $a_i=a_{k-1}=\eta(x)[|m_{k-1}|]=\Acal_{\eta,x}(|m_{k-1}|)$.
    If $i<k-1$, then
    \begin{align*}
        a_i 
        &= \eta(a_{i+1})[|m_i|]
        = \eta\left(\Acal_{\eta,x}(|m_{k-1}|\odot\ldots\odot|m_{i+1}|)\right)[|m_i|]\\
        &= \Acal_{\eta,x}(|m_{k-1}|\odot\ldots\odot|m_{i+1}|\odot|m_i|).\qedhere
    \end{align*}
\end{proof}

\begin{lemma}\label{lem:automaton-language-is-admissible}
    Let $\eta:A^*\to A^*$ be a substitution,
    $k\geq1$ be an integer and $x\in A$.
    If $v_{k-1}v_{k-2}\cdots v_0\in\Lcal(\Acal_{\eta,x})$, then
    there exists
    an $x$-admissible sequence
    $(m_i,a_i)_{i=0,\dots,k-1}$ such that
    $|m_i|=v_i$ for every $i=0,\dots,k-1$.
\end{lemma}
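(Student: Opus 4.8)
The plan is to prove Lemma~\ref{lem:automaton-language-is-admissible} by downward induction on the index, essentially inverting the computation carried out by the automaton. The goal is, given an accepted word $v_{k-1}v_{k-2}\cdots v_0 \in \Lcal(\Acal_{\eta,x})$, to build the $x$-admissible sequence $(m_i,a_i)_{i=0,\dots,k-1}$ whose letters $a_i$ are precisely the states visited when feeding the word into $\Acal_{\eta,x}$, and whose prefix lengths satisfy $|m_i| = v_i$. So first I would set $a_i := \Acal_{\eta,x}(v_{k-1}\odot v_{k-2}\odot \cdots \odot v_i)$ for each $i$, which is well-defined because the word is accepted (every prefix read from $x$ lands on a genuine state of the partial transition function). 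This is the natural reverse of Lemma~\ref{lemma:admissible-feed-into-automaton}, where the admissible sequence was shown to be recoverable from the automaton run.

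Next I would define the words $m_i$. The key observation is that, by the definition of the transition function $\delta$, having $\delta(a_{i+1}, v_i) = a_i$ means exactly that $a_i$ is the letter at position $v_i$ in $\eta(a_{i+1})$, i.e.\ $a_i = \eta(a_{i+1})[v_i]$. Therefore I would set $m_i$ to be the length-$v_i$ prefix of $\eta(a_{i+1})$ for $i < k-1$, and set $m_{k-1}$ to be the length-$v_{k-1}$ prefix of $\eta(x)$; by construction $|m_i| = v_i$. With these definitions, $m_i a_i$ is by design a prefix of $\eta(a_{i+1})$ for each $i \leq k-2$, and $m_{k-1} a_{k-1}$ is a prefix of $\eta(x)$. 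That is precisely the admissibility condition together with the $x$-admissibility condition, so the sequence is $x$-admissible.

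The one subtlety to verify carefully is that these prefixes actually exist, meaning that the integer $v_i$ is a legitimate position in $\eta(a_{i+1})$, so that both $m_i$ (the prefix of length $v_i$) and the letter $m_i a_i$ (of length $v_i+1$) make sense. This is exactly guaranteed by the hypothesis that the word is in $\Lcal(\Acal_{\eta,x})$: for $\delta(a_{i+1}, v_i)$ to be defined, the digit $v_i$ must satisfy $0 \leq v_i < |\eta(a_{i+1})|$, which says that $m_i a_i$ fits inside $\eta(a_{i+1})$ as a prefix. So the existence of each prefix is immediate from acceptance, and no real obstacle arises here beyond bookkeeping.

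I expect the main point requiring attention to be simply making the base versus inductive indexing line up cleanly, in particular keeping straight that $a_i$ depends on the run $\Acal_{\eta,x}(v_{k-1}\odot\cdots\odot v_i)$ while the defining relation $m_i a_i \sqsubseteq \eta(a_{i+1})$ links index $i$ to index $i+1$. An alternative and perhaps slicker presentation would forgo explicit induction and instead invoke Lemma~\ref{lemma:admissible-feed-into-automaton} directly: one defines $a_i$ via the automaton as above, checks by the consistency of $\delta$ that this agrees with $a_i = \eta(a_{i+1})[v_i]$, and then reads off admissibility. Either way, the whole argument is essentially a translation between the automaton's transition relation and the prefix condition defining admissibility, so the proof should be short.
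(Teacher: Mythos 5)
Your proposal is correct and matches the paper's argument in substance: the paper proves the lemma by induction on $k$, peeling off the last digit and using Lemma~\ref{lemma:admissible-feed-into-automaton} to identify the state $a_1=\Acal_{\eta,x}(v_k\cdots v_1)$ before defining $m_0$ as the length-$v_0$ prefix of $\eta(a_1)$, which is exactly your direct construction phrased as an induction on word length. The only difference is presentational, so nothing further is needed.
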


\begin{proof}
    We carry out the proof by induction on $k$.
    If $k = 1$, then $v_0 \in\Lcal(\Acal_{\eta,x})$ implies that $0\leq v_0 < |\eta(x)|$.
    Denote $m_0$ the proper prefix of $\eta(x)$ of length $v_0$ and denote $a_0\in A$
    so that $m_0 a_0$ is a prefix of  $\eta(x)$. 
    The length-1 sequence $(m_i,a_i)_{i=0}$ is $x$-admissible
    and satisfies the condition that $|m_0|=v_0$.

    Induction hypothesis: 
    for some integer $k\geq1$ it holds that for every word 
    $w_{k-1}w_{k-2}\cdots w_0\in\Lcal(\Acal_{\eta,x})$ of length $k$,
    there exists
    an $x$-admissible sequence
    $(m_i,a_i)_{i=0,\dots,k-1}$ such that
    $|m_i|=w_i$ for every $i=0,\dots,k-1$.
    Let
    $v_{k}v_{k-1}\cdots v_0\in\Lcal(\Acal_{\eta,x})$.
    Then from the induction hypothesis applied on 
    $v_{k}v_{k-1}\cdots v_1\in\Lcal(\Acal_{\eta,x})$, which is of length $k$,
    we have an $x$-admissible sequence
    $(m_i,a_i)_{i=1,\dots,k}$ such that
    $|m_i|=v_i$ for every $i=1,\dots,k$.
    We have from Lemma~\ref{lemma:admissible-feed-into-automaton}
    that $a_1 = \Acal_{\eta,x}(v_k v_{k-1} \cdots v_1)$ and
    we have
    from the definition of the automaton 
    $\Acal_{\eta,x}$ that $v_0 < |\eta(a_1)|$. 
    Denote $m_0$ the proper prefix of $\eta(a_1)$ of length $v_0$ and denote $a_0\in A$
    so that $m_0 a_0$ is a prefix of  $\eta(a_1)$. 
    Then $|m_0| = v_0$ and 
    $(m_i,a_i)_{i=0,\dots,k}$ is an $x$-admissible sequence.
\end{proof}

Lemma~\ref{lem:prefix_recursion} can be used to construct
an admissible sequence from a prefix of the image of a~letter
under the $p$-th power of a substitution.

\begin{lemma}\label{lem:when-m-prefix-of-eta-p-a}
    Let $\eta:A^*\to A^*$ be a substitution and
    $p\geq 1$ be an integer.
    If $m\in A^*$ and $x\in A$ are such that
    $m$ is a proper prefix of $\eta^p(x)$, then
    there exists a unique $x$-admissible sequence
    $(m_i,a_i)_{i=0,\dots,p-1}$ such that
    \begin{equation}\label{eq:m-as-product}
        |m| = \textstyle\sum_{j=0}^{p-1} |\eta^j(m_j)|.
    \end{equation}
    Moreover, $m = \eta^{p-1}(m_{p-1}) \eta^{p-2}(m_{p-2}) \cdots \eta^0(m_0)$.
\end{lemma}

\begin{proof}
    (Uniqueness)
    Let
    $(m_i,a_i)_{i=0,\dots,p-1}$ and
    $(m'_i,a'_i)_{i=0,\dots,p-1}$ be two $x$-admissible sequences satisfying the hypothesis.
    Then
    \[
            \textstyle\sum_{j=0}^{p-1} |\eta^j(m_j)| 
          = |m|
          = \textstyle\sum_{j=0}^{p-1} |\eta^j(m'_j)|.
    \]
    By Lemma~\ref{lem:unique_admissible_sequence},
    $(m_i,a_i)_{i=0,\dots,p-1}=(m'_i,a'_i)_{i=0,\dots,p-1}$.

    (Existence)
    We carry out the proof by induction on $p$.
    If $p=1$, then $m$ is a proper prefix of $\eta(x)$.
    Let $m_0=m$ and $a_0\in A$ be such that $m a_0$ is a prefix of $\eta(x)$.
    The length-1 sequence $(m_i,a_i)_{i=0}$ is $x$-admissible
    and satisfies the condition that $m=\eta^0(m_0)$.

    Now let $m\in A^*$ and $x\in A$ be such that
    $m$ is a proper prefix of $\eta^{p+1}(x)$.
    From Lemma~\ref{lem:prefix_recursion}, 
    there exist $(m_p,a_p)\in A^*\times A$ and $m''\in A^*$
    such that $m_pa_p$ is a prefix of $\eta(x)$,
    $m''$ is a proper prefix of $\eta^{p}(a_p)$
    and $m = \eta^{p}(m_p) m''$.
    By the induction hypothesis,
    there exists an $a_p$-admissible sequence
    $(m_i,a_i)_{i=0,\dots,p-1}$ such that
    \[
        m'' = \eta^{p-1}(m_{p-1}) \eta^{p-2}(m_{p-2}) \cdots \eta^0(m_0).
    \]
    Therefore, 
    \[
        m = \eta^{p}(m_p) m''
          = \eta^{p}(m_p)\eta^{p-1}(m_{p-1}) \eta^{p-2}(m_{p-2}) \cdots \eta^0(m_0).
    \]
    The extended sequence $(m_i,a_i)_{i=0,\dots,p}$ is $x$-admissible
    since $m_{p-1}a_{p-1}$ is a prefix of $\eta(a_p)$
    and $m_pa_p$ is a prefix of $\eta(x)$.
\end{proof}

Let $\eta:A^*\to A^*$ be a substitution and
$\Dcal=\left\{\0,...,\max_{c\in A}|\eta(c)|-1\right\}$.
Lemma~\ref{lem:when-m-prefix-of-eta-p-a} allows us to define
a map for every integer $p\geq 1$ and $x\in A$ as follows
	\[
	\begin{array}{rccl}
        \tail_{\eta,p,x}:&\{\0,\1,\dots,|\eta^p(x)|-1\} & \to & \Dcal^p\\
		&n & \mapsto & |m_{p-1}|\odot|m_{p-2}|\odot\ldots\odot|m_0|,
	\end{array}
	\]
where 
$(m_i,a_i)_{i=0,\dots,p-1}$ is the unique $x$-admissible sequence
satisfying Equation~\eqref{eq:m-as-product}
with $m$ being the prefix of length $n$ of $\eta^p(x)$.
The map $\tail_{\eta,p,x}$ will be used in Section~\ref{sec:dumont-thomas-Z}.

\begin{example}\label{ex:Tribo}
    Consider the Tribonacci substitution $\psi_{T}: a\mapsto ab, b\mapsto ac,
    c\mapsto a$ \cite{Ra82}.
    The successive images of $a$ under the substitution $\psi_T$ are illustrated below
    in a tree.
\begin{center}
\begin{tikzpicture}[yscale=1.5]
    \node[above,rectangle,draw] (00a) at (0,0)  {$a$};
    \node[above,rectangle,draw] (10a) at (0,-1) {$a$};
    \node[above,rectangle,draw] (11b) at (1,-1) {$b$};
    \node[above,rectangle,draw] (20a) at (0,-2) {$a$};
    \node[above,rectangle,draw] (21b) at (1,-2) {$b$};
    \node[above,rectangle,draw] (22a) at (2,-2) {$a$};
    \node[above,rectangle,draw] (23c) at (3,-2) {$c$};
    \node[above,rectangle,draw] (30a) at (0,-3) {$a$};
    \node[above,rectangle,draw] (31b) at (1,-3) {$b$};
    \node[above,rectangle,draw] (32a) at (2,-3) {$a$};
    \node[above,rectangle,draw] (33c) at (3,-3) {$c$};
    \node[above,rectangle,draw] (34a) at (4,-3) {$a$};
    \node[above,rectangle,draw] (35b) at (5,-3) {$b$};
    \node[above,rectangle,draw] (36a) at (6,-3) {$a$};
    \node[above] (40a) at (0,-3.5) {$0$};
    \node[above] (41b) at (1,-3.5) {$1$};
    \node[above] (42a) at (2,-3.5) {$2$};
    \node[above] (43c) at (3,-3.5) {$3$};
    \node[above] (44a) at (4,-3.5) {$4$};
    \node[above] (45b) at (5,-3.5) {$5$};
    \node[above] (46a) at (6,-3.5) {$6$};
    \node[above] (47a) at (7,-3.5) {$\cdots$};

    \draw(-.5,-3.1) -- (7.5,-3.1);

    \draw[->,dashed] (00a) -- node[fill=white] {$\0$} (10a);
    \draw[->] (00a) -- node[fill=white] {$\1$} (11b);
    \draw[->,dashed] (10a) -- node[fill=white] {$\0$} (20a);
    \draw[->,dashed] (10a) -- node[fill=white] {$\1$} (21b);
    \draw[->] (11b) -- node[fill=white] {$\0$} (22a);
    \draw[->] (11b) -- node[fill=white] {$\1$} (23c);
    \draw[->,dashed] (20a) -- node[fill=white] {$\0$} (30a);
    \draw[->,dashed] (20a) -- node[fill=white] {$\1$} (31b);
    \draw[->,dashed] (21b) -- node[fill=white] {$\0$} (32a);
    \draw[->,dashed] (21b) -- node[fill=white] {$\1$} (33c);
    \draw[->] (22a) -- node[fill=white] {$\0$} (34a);
    \draw[->] (22a) -- node[fill=white] {$\1$} (35b);
    \draw[->] (23c) -- node[fill=white] {$\0$} (36a);
\end{tikzpicture}
\end{center}
The path from the root of the tree to a node of depth $p$ at $x$-position $n\in\N$ is
labeled by $\tail_{\psi_T,p,a}(n)$. Their values are illustrated in the following table.
    \[
\begin{array}{l|c|c|c}
n & \tail_{\psi_T,1,a}(n) & \tail_{\psi_T,2,a}(n) & \tail_{\psi_T,3,a}(n)\\
\hline
0 & \0           & \0\0         & \0\0\0\\
1 & \1           & \0\1         & \0\0\1\\
2 &              & \1\0         & \0\1\0\\
3 &              & \1\1         & \0\1\1\\
4 &              &              & \1\0\0\\
5 &              &              & \1\0\1\\
6 &              &              & \1\1\0\\
\end{array}
\]
\end{example}

\begin{lemma}\label{lem:automaton_q_r}
    Let $\eta:A^*\to A^*$ be a substitution and $p\geq 1$ be an integer.
    Let $x\in A$. Then for every $\ell\in\{\0,\1,\dots, |\eta^p(x)|-1\}$ we have
    \begin{equation*}
        \eta^p(x)[\ell] = \Acal_{\eta,x}(\tail_{\eta,p, x}(\ell)).
    \end{equation*}
\end{lemma}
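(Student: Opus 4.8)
The plan is to reduce the statement to two facts that are already available: that the automaton reading $\tail_{\eta,p,x}(\ell)$ outputs the first state $a_0$ of the relevant admissible sequence, and that $a_0$ is precisely the letter sitting at position $\ell$ of $\eta^p(x)$. First I would unpack the definition of $\tail_{\eta,p,x}(\ell)$. Let $m$ be the prefix of $\eta^p(x)$ of length $\ell$ (a \emph{proper} prefix, since $\ell\leq|\eta^p(x)|-1$), and let $(m_i,a_i)_{i=0,\dots,p-1}$ be the unique $x$-admissible sequence furnished by Lemma~\ref{lem:when-m-prefix-of-eta-p-a}, so that $\tail_{\eta,p,x}(\ell)=|m_{p-1}|\odot|m_{p-2}|\odot\ldots\odot|m_0|$ and
\[
    m = \eta^{p-1}(m_{p-1})\,\eta^{p-2}(m_{p-2})\cdots\eta^0(m_0).
\]

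Next I would compute the automaton output. Since $p\geq1$, I can apply Lemma~\ref{lemma:admissible-feed-into-automaton} with $k=p$ to the $x$-admissible sequence $(m_i,a_i)_{i=0,\dots,p-1}$; taking $i=0$ in its conclusion gives
\[
    \Acal_{\eta,x}(\tail_{\eta,p,x}(\ell))
    = \Acal_{\eta,x}\bigl(|m_{p-1}|\odot\ldots\odot|m_0|\bigr)
    = a_0 .
\]
Thus it remains only to identify $a_0$ with $\eta^p(x)[\ell]$.

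For this last point the key claim is that $m\,a_0$ is a prefix of $\eta^p(x)$; since $|m|=\ell$, this immediately yields $\eta^p(x)[\ell]=a_0$ and finishes the proof. I would establish the claim by a downward induction on $i$ from $p-1$ to $0$, showing that $\eta^{p-1}(m_{p-1})\cdots\eta^{i}(m_i)\,\eta^i(a_i)$ is a prefix of $\eta^p(x)$. The base case $i=p-1$ uses $x$-admissibility: $m_{p-1}a_{p-1}$ is a prefix of $\eta(x)$, so applying the morphism $\eta^{p-1}$ (which preserves prefixes) shows $\eta^{p-1}(m_{p-1}a_{p-1})$ is a prefix of $\eta^p(x)$. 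For the inductive step, admissibility gives that $m_i a_i$ is a prefix of $\eta(a_{i+1})$, hence $\eta^i(m_i a_i)$ is a prefix of $\eta^{i+1}(a_{i+1})$; substituting this into the induction hypothesis replaces the block $\eta^{i+1}(a_{i+1})$ by $\eta^i(m_i)\,\eta^i(a_i)$ and keeps a prefix of $\eta^p(x)$. At $i=0$ this telescoping collapses to $m\,a_0$ being a prefix of $\eta^p(x)$, using $\eta^0(a_0)=a_0$.

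I do not anticipate a serious obstacle here: the argument is essentially bookkeeping once the earlier lemmas are in place. The only point requiring care is the prefix telescoping of the previous paragraph, where one must track the nesting of the conditions ``$m_{i-1}a_{i-1}$ is a prefix of $\eta(a_i)$'' under the successive powers $\eta^i$; getting the indices to line up correctly (and handling the boundary seed condition ``$m_{p-1}a_{p-1}$ is a prefix of $\eta(x)$'' separately) is the one place an error could creep in. The case $p=1$ is covered by the same argument, the induction being vacuous and the claim reducing directly to the definition of the transition function $\delta$.
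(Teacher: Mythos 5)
Your proposal is correct and follows essentially the same route as the paper: extract the unique $x$-admissible sequence via Lemma~\ref{lem:when-m-prefix-of-eta-p-a}, identify the automaton's output as $a_0$ via Lemma~\ref{lemma:admissible-feed-into-automaton}, and conclude from $ma_0$ being a prefix of $\eta^p(x)$. The only difference is that you supply an explicit telescoping induction for the claim that $ma_0$ is a prefix of $\eta^p(x)$, which the paper simply asserts (it is implicit in the construction of Lemma~\ref{lem:prefix_recursion}); your verification of it is correct.
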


\begin{proof}
    Let $m$ be the prefix of $\eta^p(x)$ of length $\ell$. 
    From Lemma~\ref{lem:when-m-prefix-of-eta-p-a},
    there exists a unique $x$-admissible sequence
    $(m_i,a_i)_{i=0,\dots,p-1}$ such that
    \[
        m = \eta^{p-1}(m_{p-1}) \eta^{p-2}(m_{p-2}) \cdots \eta^0(m_0).
    \]
    The word $ma_0$ is a prefix of $\eta^p(x)$, thus 
    $\eta^p(x)[\ell]=a_0$.
    From Lemma~\ref{lemma:admissible-feed-into-automaton},
    \[
        \eta^p(x)[\ell]=
        a_0=\Acal_{\eta,x}(|m_{p-1}|\odot|m_{p-2}|\odot\ldots\odot|m_0|)
           =\Acal_{\eta,x}(\tail_{\eta,p, x}(\ell)).\qedhere
    \]
\end{proof}

In the next lemma, we consider the total order
$(\Dcal^*,<_{lex})$, where $u<_{lex}v$ means that 
$u$ is lexicographically less than $v$.
Recall that given a totally ordered set $(\Dcal,<)$, and two words
$u,v\in\Dcal^*$ such that $v$ is nonempty, then one has 
that $u$ is lexicographically less than $v$, if $u$ is a~proper prefix of $v$, or
there exist words $r,s,t\in\Dcal^*$ and letters $a,b\in\Dcal$ such that $u =
ras$ and $v = rbt$ with $a < b$.

\begin{lemma}\label{lem:lexico-order-on-admissible-seq}
    Let $\eta:A^*\to A^*$ be a substitution and
    $p\geq 1$ be an integer.
    Let $n,n'\in\{\0,\1,\dots,|\eta^p(x)|-1\}$.
    Then
    \begin{enumerate}[(i)]
        \item $n=n'$ if and only if
            $\tail_{\eta,p,x}(n) =\tail_{\eta,p,x}(n')$,
        \item $n<n'$ if and only if
            $\tail_{\eta,p,x}(n) <_{lex}\tail_{\eta,p,x}(n')$.
    \end{enumerate}
\end{lemma}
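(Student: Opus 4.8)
The plan is to prove both equivalences by relating the map $\tail_{\eta,p,x}$ to the structure provided by Lemma~\ref{lem:when-m-prefix-of-eta-p-a}. The key observation is that for each $n$, the word $\tail_{\eta,p,x}(n) = |m_{p-1}|\odot\cdots\odot|m_0|$ encodes the unique $x$-admissible sequence with $|m|=\sum_{j=0}^{p-1}|\eta^j(m_j)|=n$, where $m$ is the prefix of $\eta^p(x)$ of length $n$. First I would dispose of part (i). The forward direction is trivial since $\tail$ is a function. For the reverse direction, I would invoke Lemma~\ref{lem:lexico-order-on-admissible-seq}'s underlying uniqueness: if $\tail_{\eta,p,x}(n)=\tail_{\eta,p,x}(n')$, then the two associated admissible sequences have equal lengths $|m_i|=|m_i'|$ for all $i$, hence $n=\sum_j|\eta^j(m_j)|=\sum_j|\eta^j(m_j')|=n'$. (Alternatively, part (i) follows immediately from part (ii) by trichotomy, so I may just prove (ii) and deduce (i).)

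The substance is part (ii). Since both $n<n'$ and $\tail_{\eta,p,x}(n)<_{lex}\tail_{\eta,p,x}(n')$ are trichotomous relations on the same finite set, and since (i) already identifies equality on both sides, it suffices to prove one implication, say that $n<n'$ implies $\tail_{\eta,p,x}(n)<_{lex}\tail_{\eta,p,x}(n')$; the converse then follows by trichotomy. So I would assume $n<n'$ and compare the two admissible sequences $(m_i,a_i)_i$ and $(m_i',a_i')_i$ associated to the prefixes of lengths $n$ and $n'$ respectively.

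The core computation is to show these differ lexicographically in the correct direction. I would let $j$ be the largest index where the length-words first disagree, i.e.\ $|m_i|=|m_i'|$ for all $i>j$ but $|m_j|\neq|m_j'|$; such $j$ exists by part (i) since $n\neq n'$. Because the higher-indexed entries agree, Lemma~\ref{lemma:admissible-feed-into-automaton} forces $a_{j+1}=a_{j+1}'$ (both equal the state reached by feeding the common suffix into $\Acal_{\eta,x}$), so $m_j a_j$ and $m_j' a_j'$ are both prefixes of $\eta(a_{j+1})=\eta(a_{j+1}')$; being prefixes of a common word, the shorter is a prefix of the longer, and the one of strictly smaller length has the smaller $j$-th tail entry. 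It remains to check the direction is right: I would argue that $|m_j|<|m_j'|$ forces $n<n'$, which combined with the hypothesis $n<n'$ pins down $|m_j|<|m_j'|$, giving $\tail_{\eta,p,x}(n)<_{lex}\tail_{\eta,p,x}(n')$ at position $p-1-j$. The mechanism is that with all higher blocks equal, we have $n-n' = \sum_{i\le j}(|\eta^i(m_i)|-|\eta^i(m_i')|)$, and the block $|\eta^j(m_j)|$ strictly dominates the sum of all lower blocks by Lemma~\ref{lem:admissible-sequence-sum} (the admissibility bound $\sum_{i<j}|\eta^i(m_i)|<|\eta^j(m_j a_j)|$), so the sign of $n-n'$ is governed entirely by the sign of $|m_j|-|m_j'|$.

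The main obstacle I anticipate is making the dominance argument fully rigorous: I must carefully bound $\sum_{i<j}|\eta^i(m_i)|$ and $\sum_{i<j}|\eta^i(m_i')|$ against the difference $\bigl||\eta^j(m_j)|-|\eta^j(m_j')|\bigr|$ to guarantee that the leading block at index $j$ cannot be overturned by the accumulated lower-order blocks. The clean way is to observe that whenever $|m_j|<|m_j'|$, the word $ma_j\cdots$ is a prefix of $m'$, so $n=|m|<|m'|=n'$ directly from $m$ being a proper prefix of $m'$ via the factorization $m=\eta^{p-1}(m_{p-1})\cdots\eta^0(m_0)$ together with Lemma~\ref{lem:admissible-sequence-sum}; this sidesteps delicate arithmetic by using the prefix geometry of $\eta^p(x)$ instead. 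Once this monotone correspondence between block-length order at the critical index and the integer order on $n,n'$ is established, both equivalences drop out by trichotomy.
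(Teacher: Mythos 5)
Your proposal is correct and follows essentially the same route as the paper: locate the highest index where the length-digits differ, use the common-prefix structure to force $m_i=m'_i$ and $a_i=a'_i$ above it, let Lemma~\ref{lem:admissible-sequence-sum} make the critical block dominate the accumulated lower-order ones, and close the remaining implications by trichotomy together with part (i). One minor gloss: in your part (i), $|m_i|=|m'_i|$ alone does not yield $|\eta^i(m_i)|=|\eta^i(m'_i)|$ for a non-uniform $\eta$ --- you need the inductive identification of $m_i=m'_i$ as words (equal-length prefixes of the image of a common letter), which is exactly the step you do carry out in part (ii), so the idea is present but should be stated in (i) as well.
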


\begin{proof}
    Let
    $(m_i,a_i)_{i=0,\dots,p-1}$
    and
    $(m'_i,a'_i)_{i=0,\dots,p-1}$
    be two $x$-admissible sequences such that
    $n=\textstyle\sum_{j=0}^{p-1} |\eta^j(m_j)|$
    and $n'=\textstyle\sum_{j=0}^{p-1} |\eta^j(m'_j)|$.
    Thus
            $\tail_{\eta,p,x}(n)
            = |m_{p-1}|\odot|m_{p-2}|\odot\ldots\odot|m_0|$
    and     $\tail_{\eta,p,x}(n')
            = |m'_{p-1}|\odot|m'_{p-2}|\odot\ldots\odot|m'_0|$.

    (i)
    If $n=n'$, then 
    $\tail_{\eta,p,x}(n) =\tail_{\eta,p,x}(n')$.
    Conversely, if $\tail_{\eta,p,x}(n) =\tail_{\eta,p,x}(n')$, then
    $m_{p-1}a_{p-1}=m'_{p-1}a'_{p-1}$ since both are prefixes
    of the same length of $\eta(x)$. Thus $m_{p-1}=m'_{p-1}$ and
    $a_{p-1}=a'_{p-1}$.
    Similarly, $m_{p-2}a_{p-2}=m'_{p-2}a'_{p-2}$ since both are prefixes
    of the same length of $\eta(a_{p-1})$. Thus $m_{p-2}=m'_{p-2}$ and
    $a_{p-2}=a'_{p-2}$. By induction, we obtain
    $(m_i,a_i)_{i=0,\dots,p-1}=(m'_i,a'_i)_{i=0,\dots,p-1}$.
    Thus $n=\textstyle\sum_{j=0}^{p-1} |\eta^j(m_j)|
    =\textstyle\sum_{j=0}^{p-1} |\eta^j(m'_j)|=n'$.

    (ii)
    Suppose that 
        $|m_{p-1}|\odot|m_{p-2}|\odot\ldots\odot|m_0|
        <_{lex} |m'_{p-1}|\odot|m'_{p-2}|\odot\ldots\odot|m'_0|$.
    Then there exists an integer $\ell$
    such that $0\leq \ell\leq p-1$,
    $|m_{j}|=|m'_j|$ for every integer $j$ such that $\ell<j\leq p-1$
    and $|m_\ell|<|m'_\ell|$.
    Since $|m_{p-1}|=|m'_{p-1}|$ and $m_{p-1}a_{p-1}$ and $m'_{p-1}a'_{p-1}$ are prefixes of $\eta(x)$
    we have that $m_{p-1}=m'_{p-1}$ and $a_{p-1}=a'_{p-1}$.
    Similarly, we have $m_j=m'_j$ and $a_j=a'_j$ for every $j$ such that $\ell<j\leq p-1$.
    Thus $m_\ell a_\ell$ and $m'_\ell a'_\ell$ must both be prefixes of 
    the image under $\eta$ of the same letter. 
    This letter is $x$ if $\ell=p-1$ or otherwise is $a_{\ell+1}=a'_{\ell+1}$.
    Since $|m_\ell|<|m'_\ell|$, we have that $m_\ell a_\ell$ is a prefix
    of $m'_\ell$.
    Using Lemma~\ref{lem:admissible-sequence-sum}, we have
    \begin{align*}
        n-n' 
        &= \textstyle\sum_{j=0}^{p-1} |\eta^j(m_j)|
         - \textstyle\sum_{j=0}^{p-1} |\eta^j(m'_j)|\\
        &= \textstyle\sum_{j=0}^{\ell} |\eta^j(m_j)|
         - \textstyle\sum_{j=0}^{\ell} |\eta^j(m'_j)|
         \leq |\eta^\ell(m_\ell\, a_\ell)| - |\eta^\ell(m'_\ell)|
        \leq 0.
    \end{align*}
    Then $n\leq n'$. If $n=n'$, we obtain a contradiction from part (i).
    Thus, we conclude that $n<n'$.
    
    Now suppose that $n<n'$
    and suppose by contradiction that
        $\tail_{\eta,p,x}(n)
        \not<_{lex} \tail_{\eta,p,x}(n')$.
    If $\tail_{\eta,p,x}(n)
        =  \tail_{\eta,p,x}(n')$,
        then we obtain from part (i) that $n=n'$, a contradiction.
    If $\tail_{\eta,p,x}(n)
        >_{lex} \tail_{\eta,p,x}(n')$,
        then we obtain from above that $n>n'$, a contradiction.
    Therefore, we conclude that
        $\tail_{\eta,p,x}(n)
        <_{lex} \tail_{\eta,p,x}(n')$.
\end{proof}

\section{Dumont--Thomas complement numeration systems for $\Z$ based on periodic points}
\label{sec:dumont-thomas-Z}

In this section, we prove extensions of
Theorem~\ref{thm:dumont-thomas} to right-infinite and left-infinite periodic
points of substitutions from which we deduce a numeration system for $\Z$
associated to any two-sided periodic point with growing seed of a substitution.

\begin{theorem}\label{thm:dumont-thomas-right-p}
    Let $\eta:A^*\to A^*$ be a substitution with growing letter $a\in A$.
    Let $u\in \Per_{\Zp}(\eta)$ such that $u_0 = a$.
    Let $p\geq 1$ be a period of $u$.
    For every integer $n\geq1$,
    there exists a unique integer $k=k(n)$
    such that $p$ divides $k$
    and a unique sequence $(m_i,a_i)_{i=0,\dots,k-1}$ such that
    \begin{enumerate}[(i)]
        \item this sequence is $a$-admissible and 
            $m_{k-1} m_{k-2} \cdots m_{k-p}\neq\varepsilon$,
        \item $u_0u_1\cdots u_{n-1} 
            = \eta^{k-1}(m_{k-1}) \eta^{k-2}(m_{k-2}) \cdots \eta^0(m_0)$.
    \end{enumerate}
\end{theorem}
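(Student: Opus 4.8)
The plan is to reduce the statement to the original Dumont--Thomas theorem applied to the substitution $\eta^p$. Since $u\in\Per_{\Zp}(\eta)$ has period $p$, it is a right-infinite fixed point of $\eta^p$ with growing seed $u_0=a$, so Theorem~\ref{thm:dumont-thomas} applies to $\eta^p$: for every $n\geq1$ there are a unique $K=K(n)$ and a unique $a$-admissible (with respect to $\eta^p$) sequence $(M_j,A_j)_{j=0,\dots,K}$ with $M_K\neq\varepsilon$ and $u_0\cdots u_{n-1}=(\eta^p)^K(M_K)\cdots(\eta^p)^0(M_0)$. I would then refine each coarse block. For $j<K$ the word $M_j$ is a proper prefix of $\eta^p(A_{j+1})$, and $M_K$ is a proper prefix of $\eta^p(a)$; writing $X_j=A_{j+1}$ for $j<K$ and $X_K=a$, Lemma~\ref{lem:when-m-prefix-of-eta-p-a} provides a unique $X_j$-admissible sequence $(m_{j,i},a_{j,i})_{i=0,\dots,p-1}$ with $M_j=\eta^{p-1}(m_{j,p-1})\cdots\eta^0(m_{j,0})$. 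Reindexing by the global index $t=pj+i$ (so $m_t=m_{j,i}$, $a_t=a_{j,i}$) and setting $k=p(K+1)$, which is a multiple of $p$, turns $(\eta^p)^j(M_j)=\eta^{pj+p-1}(m_{j,p-1})\cdots\eta^{pj}(m_{j,0})$ into a concatenation of $\eta$-blocks and yields property (ii).

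For property (i), admissibility inside each block $j$ is immediate from $X_j$-admissibility of $(m_{j,i},a_{j,i})_i$. The delicate point is admissibility across block boundaries, i.e.\ that $m_{pj-1}a_{pj-1}=m_{j-1,p-1}a_{j-1,p-1}$ is a prefix of $\eta(a_{pj})=\eta(a_{j,0})$. Here I would use that the bottom letter of a refinement equals the letter following the refined prefix: since $M_j a_{j,0}$ is a prefix of $\eta^p(X_j)$ (this is exactly the identity underlying Lemma~\ref{lem:automaton_q_r}) and $M_j A_j$ is also a prefix of $\eta^p(X_j)$ of the same length, we get $a_{j,0}=A_j$. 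As $A_j=X_{j-1}$ for $1\leq j\leq K$, the boundary condition becomes ``$m_{j-1,p-1}a_{j-1,p-1}$ is a prefix of $\eta(X_{j-1})$'', which is the top condition of the $X_{j-1}$-admissible block $j-1$; the very top uses $X_K=a$. Finally $m_{k-1}\cdots m_{k-p}=m_{K,p-1}\cdots m_{K,0}$ is nonempty because $M_K=\eta^{p-1}(m_{K,p-1})\cdots\eta^0(m_{K,0})\neq\varepsilon$.

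For uniqueness I would run the construction backwards. Given any $k$ divisible by $p$ and any $a$-admissible sequence $(m_i,a_i)_{i=0,\dots,k-1}$ satisfying (i)--(ii), I group it into blocks of length $p$: set $M_j=\eta^{p-1}(m_{pj+p-1})\cdots\eta^0(m_{pj})$ and $A_j=a_{pj}$ for $j=0,\dots,k/p-1$. Each length-$p$ subsequence is $A_{j+1}$-admissible (respectively $a$-admissible for the top block), so the prefix identity gives that $M_jA_j$ is a prefix of $\eta^p(A_{j+1})$ (respectively of $\eta^p(a)$); hence $(M_j,A_j)_j$ is $a$-admissible with respect to $\eta^p$, it realizes the same $\eta^p$-decomposition of $u_0\cdots u_{n-1}$, and $M_{k/p-1}\neq\varepsilon$ because $m_{k-1}\cdots m_{k-p}\neq\varepsilon$. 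The uniqueness part of Theorem~\ref{thm:dumont-thomas} for $\eta^p$ then forces $k/p-1=K$, hence $k=p(K+1)$, and determines every $M_j$; the uniqueness part of Lemma~\ref{lem:when-m-prefix-of-eta-p-a} determines each refinement $(m_{j,i},a_{j,i})_i$, so the whole sequence $(m_i,a_i)_i$ is unique.

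I expect the main obstacle to be the boundary bookkeeping: verifying that the letters match at the junctions of consecutive $\eta$-blocks (the identity $a_{j,0}=A_j=X_{j-1}$) and that regrouping an arbitrary admissible $\eta$-sequence of length divisible by $p$ produces a genuinely $\eta^p$-admissible sequence. Both rest on the same elementary fact --- that the refinement of a prefix $m$ of $\eta^p(x)$ is followed by the letter $\eta^p(x)[\,|m|\,]$ --- which is already implicit in the proof of Lemma~\ref{lem:automaton_q_r}; once this is isolated, everything else is routine reindexing together with the uniqueness statements already at hand.
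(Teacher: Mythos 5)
Your proof is correct, but it takes a genuinely different route from the paper's. The paper never invokes Theorem~\ref{thm:dumont-thomas} for $\eta^p$: it observes that $(|\eta^{p\ell}(a)|)_{\ell}$ is strictly increasing, picks the unique $\ell$ with $|\eta^{p(\ell-1)}(a)|\leq n<|\eta^{p\ell}(a)|$, sets $k=p\ell$, and applies Lemma~\ref{lem:when-m-prefix-of-eta-p-a} \emph{directly with exponent $k$} to the proper prefix $u_0\cdots u_{n-1}$ of $\eta^k(a)$; this yields existence and uniqueness of the whole length-$k$ sequence in one stroke, and the condition $m_{k-1}\cdots m_{k-p}\neq\varepsilon$ is then checked by a short contradiction via Lemma~\ref{lem:admissible-sequence-sum} (emptiness would force $a_{k-p}=a$ and $n<|\eta^{k-p}(a)|$). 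Your reduction to Theorem~\ref{thm:dumont-thomas} for $\eta^p$ followed by block refinement is sound --- the boundary identity $a_{j,0}=A_j$ that you single out does hold, since $M_ja_{j,0}$ and $M_jA_j$ are prefixes of $\eta^p(X_j)$ of the same length, and your backward grouping argument for uniqueness is valid --- but it buys the result at the cost of exactly the bookkeeping you anticipate: junction admissibility in the forward direction and $\eta^p$-admissibility of the regrouped blocks in the backward direction, plus isolating the auxiliary fact that an $x$-admissible refinement of a prefix $m$ of $\eta^p(x)$ is followed by the letter $\eta^p(x)[|m|]$. What your route buys in exchange is that the uniqueness of $k$ falls out of the uniqueness of $K$ in Theorem~\ref{thm:dumont-thomas}, and it makes the statement transparently a ``period-$p$ regrouping'' of the classical theorem; the paper's route is shorter because Lemma~\ref{lem:when-m-prefix-of-eta-p-a} was already stated for an arbitrary exponent, so no refinement or regluing is ever needed.
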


\begin{proof}
    Since $u$ is a periodic point of period $p$, we have 
    that $u_0=a$ is a prefix of $\eta^p(a)$.
    Also, since $a$ is growing, we have that $\eta^p(a)\in a A^+$.
    Thus $(|\eta^{p\ell}(a)|)_{\ell\in\N}$ is a strictly increasing sequence
    starting with value 1 when $\ell=0$.
    Let $n\geq 1$ be an integer. 
    There exists a unique integer 
    $\ell\geq1$ such that
    $ |\eta^{p(\ell-1)}(a)| \leq n < |\eta^{p\ell}(a)|$.
    Let $k=p\ell$ so that we have
    \begin{equation}\label{eq:interval-for-n-in-Theorem11}
        |\eta^{k-p}(a)| \leq n < |\eta^{k}(a)|.
    \end{equation}
    The word $m = u_0u_1\cdots u_{n-1}$ is thus a proper prefix
    of $\eta^{k}(a)$.
    From Lemma~\ref{lem:when-m-prefix-of-eta-p-a},
    there exists a~unique $a$-admissible sequence
    $(m_i,a_i)_{i=0,\dots,k-1}$ such that
    \[
        m = \eta^{k-1}(m_{k-1}) \eta^{k-2}(m_{k-2}) \cdots \eta^0(m_0).
    \]
    Assume by contradiction that $m_{k-1} m_{k-2} \cdots m_{k-p}=\varepsilon$. 
    Then $a_{k-p}=a$ and
    from Lemma~\ref{lem:admissible-sequence-sum}, we have
    \begin{align*}
        n &= |m| 
        = \textstyle\sum_{j=0}^{k-1}|\eta^j(m_j)|
        = \textstyle\sum_{j=0}^{k-p-1}|\eta^j(m_j)|\\
        &< |\eta^{k-p-1}(m_{k-p-1}a_{k-p-1})|
        \leq |\eta^{k-p-1}(\eta(a_{k-p}))|
        = |\eta^{k-p}(a)|,
    \end{align*}
    a contradiction with~\eqref{eq:interval-for-n-in-Theorem11}.
    Thus $m_{k-1} m_{k-2} \cdots m_{k-p}\neq\varepsilon$.
\end{proof}


We now adapt Dumont--Thomas's theorem to the left-infinite periodic points.

\begin{theorem}\label{thm:dumont-thomas-for-left-side-p}
    Let $\eta:A^*\to A^*$ be a substitution with growing letter $b\in A$.
    Let $u\in \Per_{\Zn}(\eta)$ such that $u_{-1} = b$.
    Let $p\geq 1$ be a period of $u$.
    For every integer $n\leq -2$,
    there exists a unique integer $k=k(n)$ 
    such that $p$ divides $k$
    and a unique sequence
    $(m_i,a_i)_{i=0,\dots,k-1}$ such that
    \begin{enumerate}[(i)]
        \item this sequence is $b$-admissible and 
            \begin{equation}\label{eq:left-periodic-condition}
                \eta^{p-1}(m_{k-1})\eta^{p-2}(m_{k-2})\cdots\eta^0(m_{k-p})a_{k-p}
                \neq\eta^p(b),
            \end{equation}
        \item $u_{-|\eta^{k}(b)|}\cdots u_{n-2}u_{n-1} 
            = \eta^{k-1}(m_{k-1}) \eta^{k-2}(m_{k-2}) \cdots \eta^0(m_0)$.
    \end{enumerate}
\end{theorem}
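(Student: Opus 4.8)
The plan is to mirror the proof of Theorem~\ref{thm:dumont-thomas-right-p}, but working with suffixes of the left-infinite word $u$ instead of prefixes, and replacing the minimality condition ``$m_{k-1}\cdots m_{k-p}\neq\varepsilon$'' by its complement-style counterpart \eqref{eq:left-periodic-condition}. First I would record the structural facts. Since $\eta^p(u)=u$ and $u_{-1}=b$, the block $\eta^p(u_{-1})=\eta^p(b)$ occupies the positions ending at $-1$, so $\eta^p(b)$ ends with $b$ and $\eta^p(b)=u_{-|\eta^p(b)|}\cdots u_{-1}$; iterating, $\eta^{k}(b)=u_{-|\eta^{k}(b)|}\cdots u_{-1}$ whenever $p\mid k$. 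Because $b$ is growing, $\eta^p(b)=v_0\,b$ with $v_0\neq\varepsilon$, and hence $(|\eta^{p\ell}(b)|)_{\ell\ge0}$ is strictly increasing from $|b|=1$ to $+\infty$. Given $n\le-2$ we have $-n\ge2>1$, so there is a unique $\ell\ge1$ with $|\eta^{p(\ell-1)}(b)|<-n\le|\eta^{p\ell}(b)|$; I set $k=p\ell$, which is the candidate value. Equivalently,
\[
    -|\eta^{k}(b)|\le n<-|\eta^{k-p}(b)|.
\]

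For existence, the word $m:=u_{-|\eta^{k}(b)|}\cdots u_{n-1}$ has length $|\eta^k(b)|+n$, which lies in $[0,|\eta^k(b)|)$ by the displayed inequalities, so $m$ is a proper prefix of $\eta^{k}(b)$. Lemma~\ref{lem:when-m-prefix-of-eta-p-a} (with $p$ replaced by $k$ and $x=b$) then yields a unique $b$-admissible sequence $(m_i,a_i)_{i=0,\dots,k-1}$ with $m=\eta^{k-1}(m_{k-1})\cdots\eta^0(m_0)$, which is exactly (ii). For (i), I would first observe that the word $w:=\eta^{p-1}(m_{k-1})\cdots\eta^0(m_{k-p})a_{k-p}$ is always a prefix of $\eta^p(b)$: starting from the $b$-admissibility relation that $m_{k-1}a_{k-1}$ is a prefix of $\eta(b)$ together with the admissibility relations that $m_{i}a_{i}$ is a prefix of $\eta(a_{i+1})$ for $k-p\le i<k-1$, applying the successive powers $\eta^{p-1},\eta^{p-2},\dots$ and telescoping exhibits $w$ as a prefix of $\eta^{p}(b)$. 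Thus \eqref{eq:left-periodic-condition} is the same as $|w|<|\eta^p(b)|$. Assume for contradiction that $w=\eta^p(b)$; since $\eta^p(b)=v_0\,b$, this forces $a_{k-p}=b$ and $\eta^{p-1}(m_{k-1})\cdots\eta^0(m_{k-p})=v_0$, whose image under $\eta^{k-p}$ is the prefix of $\eta^{k}(b)=\eta^{k-p}(v_0)\,\eta^{k-p}(b)$ of length $|\eta^{k}(b)|-|\eta^{k-p}(b)|$. As these top $p$ levels form a prefix of $m$, we get $|m|\ge|\eta^{k}(b)|-|\eta^{k-p}(b)|$, i.e.\ $n\ge-|\eta^{k-p}(b)|$, contradicting the choice of $k$. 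Hence \eqref{eq:left-periodic-condition} holds.

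For uniqueness I would show that any $k$ which is a multiple of $p$ and admits a sequence satisfying (i)--(ii) must satisfy the same displayed interval; this determines $\ell$, hence $k$, and then the sequence is unique by Lemma~\ref{lem:when-m-prefix-of-eta-p-a}. Condition (ii) already forces $n\ge-|\eta^k(b)|$, since $m$ has non-negative length. The reverse bound $n<-|\eta^{k-p}(b)|$ is the converse of the implication used above, and this is the step I expect to be the main obstacle: I must prove that $n\ge-|\eta^{k-p}(b)|$ forces $w=\eta^p(b)$, so that (i) fails. Here one writes $m=\eta^{k-p}(v_0)\,m''$ with $m''$ a proper prefix of $\eta^{k-p}(b)$ of length $n+|\eta^{k-p}(b)|\ge0$, takes the unique $b$-admissible sequence of $v_0$ (a proper prefix of $\eta^p(b)$, whose bottom seed is the letter $b$ following $v_0$) and the unique $b$-admissible sequence of $m''$ relative to $\eta^{k-p}(b)$, and concatenates them. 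This concatenation is again $b$-admissible and represents $m$, so by the uniqueness in Lemma~\ref{lem:when-m-prefix-of-eta-p-a} it coincides with $(m_i,a_i)_{i=0,\dots,k-1}$; reading off its top $p$ pairs gives $\eta^{p-1}(m_{k-1})\cdots\eta^0(m_{k-p})\,a_{k-p}=v_0\,b=\eta^p(b)$, as required. With both bounds established, $n$ lies in exactly one of the disjoint intervals $[-|\eta^{p\ell}(b)|,-|\eta^{p(\ell-1)}(b)|)$ for $\ell\ge1$, which pins down $k$ and completes the proof.
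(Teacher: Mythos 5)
Your proposal is correct and follows essentially the same route as the paper: choose $k=p\ell$ from the interval $-|\eta^{k}(b)|\le n<-|\eta^{k-p}(b)|$, view $u_{-|\eta^k(b)|}\cdots u_{n-1}$ as a proper prefix of $\eta^k(b)$, invoke Lemma~\ref{lem:when-m-prefix-of-eta-p-a}, and derive the length contradiction $|m|\ge|\eta^k(b)|-|\eta^{k-p}(b)|$ if \eqref{eq:left-periodic-condition} were an equality. The only difference is that you spell out the converse implication needed for the uniqueness of $k$ (via the decomposition $m=\eta^{k-p}(v_0)m''$ and Lemma~\ref{lem:unique_admissible_sequence}), a step the paper leaves implicit; your treatment of it is valid.
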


\begin{proof}
    Since $u$ is a periodic point of period $p$, we have 
    that $u_{-1}=b$ is a suffix of $\eta^p(b)$.
    Also, since $b$ is growing, we have that $\eta^p(b)\in A^+b$.
    Thus $(-|\eta^{p\ell}(b)|)_{\ell\in\N}$ is a strictly decreasing sequence
    starting with value $-1$ when $\ell=0$.
    Let $n\leq -2$ be an integer. 
    There exists a unique integer 
    $\ell\geq1$ such that
    $-|\eta^{p\ell}(b)| \leq n < -|\eta^{p(\ell-1)}(b)| $.
    Let $k=p\ell$ so that we have
    \begin{equation}\label{eq:interval-for-n-in-Theorem12}
        -|\eta^{k}(b)|\leq n  < -|\eta^{k-p}(b)|.
    \end{equation}
    Therefore the word 
    $m = u_{-|\eta^{k}(b)|}\cdots u_{n-2}u_{n-1}$ of length
    \begin{equation}\label{eq:upper-bound-for-m-in-Thm-12}
    |m| = |\eta^{k}(b)| + n 
        < |\eta^{k}(b)| - |\eta^{k-p}(b)| 
        \leq |\eta^{k}(b)|
    \end{equation}
    is a proper prefix of the word $\eta^{k}(b)$.
    From Lemma~\ref{lem:when-m-prefix-of-eta-p-a},
    there exists a unique $b$-admissible sequence
    $(m_i,a_i)_{i=0,\dots,k-1}$ such that
    \[
        m = \eta^{k-1}(m_{k-1}) \eta^{k-2}(m_{k-2}) \cdots \eta^0(m_0).
    \]
    By contradiction, assume that \eqref{eq:left-periodic-condition}
    is an equality. Then $a_{k-p}=b$ and
    \begin{equation*}
        \begin{aligned}
        |m| 
        &= |\eta^{k-p}(\eta^p(b))| - |\eta^{k-p}(a_{k-p})|
          + \textstyle\sum_{j=0}^{k-p-1}|\eta^{j}(m_{j})|\\
        &\geq |\eta^{k}(b)| - |\eta^{k-p}(b)|,
        \end{aligned}
    \end{equation*}
    a contradiction with~\eqref{eq:upper-bound-for-m-in-Thm-12}.
\end{proof}

We may now define a numeration system for $\Z$
using the previous results.

\begin{definition}[Dumont--Thomas complement numeration systems for $\Z$]\label{def:rep-not-prolongable}
    Let $\eta:A^*\to A^*$ be a~substitution and
    $u\in\Per(\eta)$ be a two-sided periodic point with growing seed $u_{-1}|u_0$.
    Let $p\geq1$ be the period of $u$.
    Let $\Dcal=\left\{\0,...,\max_{c\in A}|\eta(c)|-1\right\}$.
    We define
	\[
	\begin{array}{rccl}
        \rep_{u}:&\Z & \to & \{\0,\1\}\odot\Dcal^*\\
		&n & \mapsto &
		\begin{cases}
            \0\odot|m_{k-1}|\odot|m_{k-2}|\odot\ldots\odot|m_0|,& \text{ if } n \geq 1;\\
			\0,                                         & \text{ if } n = 0;\\
			\1,                                         & \text{ if } n =-1;\\
			\1\odot|m_{k-1}|\odot|m_{k-2}|\odot\ldots\odot|m_0|, & \text{ if } n \leq -2,
		\end{cases}
	\end{array}
	\]
    where $k=k(n)\geq0$ is the unique integer and
    $(m_i,a_i)_{i=0,\dots,k-1}$ is the unique sequence
    obtained from Theorem~\ref{thm:dumont-thomas-right-p}
    (Theorem~\ref{thm:dumont-thomas-for-left-side-p}) applied 
    on the right-infinite periodic point $u|_\Zp$
    (on the left-infinite periodic point $u|_\Zn$)
    if $n\geq 1$
    (if $n\leq -2$, respectively)
    both with period $p$.
\end{definition}

\noindent
Note that the period $p\in\N$ of $u$
divides $|\rep_{u}(n)|{-}1$ for every $n\in\Z$.
Also, one may observe that 
\begin{align}
    \rep_u(n) =
		\begin{cases}
            \0\odot\tail_{\eta,k,u_0}(n),    &\text{ if } n\geq0;\\
            \1\odot\tail_{\eta,k,u_{-1}}(n), &\text{ if } n<0.
		\end{cases}
\end{align}

\begin{remark}
    In Definition~\ref{def:rep-not-prolongable},
    the numeration system $\rep_u$ could be defined with any
    period of the two-sided periodic point $u$
    and the main result, Theorem~\ref{th:periodic_point_automatic},
    would still hold.
    A choice is made here to keep it simple and
    always take the period of the periodic point $u$.
\end{remark}

\begin{remark}
    If $u\in\Per(\eta)$ is a two-sided periodic point of period $p$ with growing seed,
    then its restriction $u|_\Zp$
    to the nonnegative integers is also a periodic point, but its period might
    be smaller than $p$ (in general, a divisor of $p$).
    For example, this is what happens for the 
    Fibonacci substitution $\varphi: a\mapsto ab, b\mapsto a$
    or the Thue-Morse substitution $\psi_{TM}: a\mapsto ab, b\mapsto ba$.
    Both have two-sided periodic points of period $2$
    and right-infinite fixed points.
    In Definition~\ref{def:rep-not-prolongable},
    the numeration system is defined with the period of the two-sided periodic
    point $u$ when applying Theorem~\ref{thm:dumont-thomas-right-p}
    on $u|_\Zp$
    and Theorem~\ref{thm:dumont-thomas-for-left-side-p}
    on $u|_\Zn$.
\end{remark}

When $u=\eta^p(u)$ is a periodic point of a substitution $\eta$,
then it is also a fixed point of the substitution $\eta^p$.
Thus, Theorem~\ref{thm:dumont-thomas} may be used to define a
numeration system for $\N$, but it
leads to a much larger alphabet size $\#\Dcal$.
One advantage of Definition~\ref{def:rep-not-prolongable} is
that the size of the alphabet $\Dcal$ is independent of the period $p$.

\begin{example}
    Consider the Tribonacci substitution $\psi_{T}: a\mapsto ab, b\mapsto ac,
    c\mapsto a$ \cite{Ra82}.
    The successive images of the seed $c|a$ under the substitution $\psi_T$ are illustrated below
    in a tree.
\begin{center}
\begin{tikzpicture}[yscale=1.5,>=latex]
    \node[above,rectangle,draw] (start) at (-.5,1) {\texttt{start}};
    \node[above,rectangle,draw] (00a) at (0,0)  {$a$};
    \node[above,rectangle,draw] (10a) at (0,-1) {$a$};
    \node[above,rectangle,draw] (11b) at (1,-1) {$b$};
    \node[above,rectangle,draw] (20a) at (0,-2) {$a$};
    \node[above,rectangle,draw] (21b) at (1,-2) {$b$};
    \node[above,rectangle,draw] (22a) at (2,-2) {$a$};
    \node[above,rectangle,draw] (23c) at (3,-2) {$c$};
    \node[above,rectangle,draw] (30a) at (0,-3) {$a$};
    \node[above,rectangle,draw] (31b) at (1,-3) {$b$};
    \node[above,rectangle,draw] (32a) at (2,-3) {$a$};
    \node[above,rectangle,draw] (33c) at (3,-3) {$c$};
    \node[above,rectangle,draw] (34a) at (4,-3) {$a$};
    \node[above,rectangle,draw] (35b) at (5,-3) {$b$};
    \node[above,rectangle,draw] (36a) at (6,-3) {$a$};
    \node[above,rectangle,draw] (-00c) at (-1,0)  {$c$};
    \node[above,rectangle,draw] (-11a) at (-1,-1) {$a$};
    \node[above,rectangle,draw] (-20a) at (-2,-2) {$a$};
    \node[above,rectangle,draw] (-21b) at (-1,-2) {$b$};
    \node[above,rectangle,draw] (-30a) at (-4,-3) {$a$};
    \node[above,rectangle,draw] (-31b) at (-3,-3) {$b$};
    \node[above,rectangle,draw] (-32a) at (-2,-3) {$a$};
    \node[above,rectangle,draw] (-33c) at (-1,-3) {$c$};
    \node[above] at (-5,-3.5) {$\cdots$};
    \node[above] at (-4,-3.5) {$-4$};
    \node[above] at (-3,-3.5) {$-3$};
    \node[above] at (-2,-3.5) {$-2$};
    \node[above] at (-1,-3.5) {$-1$};
    \node[above] at (0,-3.5) {$0$};
    \node[above] at (1,-3.5) {$1$};
    \node[above] at (2,-3.5) {$2$};
    \node[above] at (3,-3.5) {$3$};
    \node[above] at (4,-3.5) {$4$};
    \node[above] at (5,-3.5) {$5$};
    \node[above] at (6,-3.5) {$6$};
    \node[above] at (7,-3.5) {$\cdots$};

    \draw(-5.5,-3.1) -- (7.5,-3.1);

    \draw[->] (start) -- node[fill=white] {$\0$} (00a);
    \draw[->] (start) -- node[fill=white] {$\1$} (-00c);

    \draw[->] (00a) -- node[fill=white,inner sep=2pt] {$\0$} (10a);
    \draw[->] (00a) -- node[fill=white,inner sep=2pt] {$\1$} (11b);
    \draw[->] (10a) -- node[fill=white,inner sep=2pt] {$\0$} (20a);
    \draw[->] (10a) -- node[fill=white,inner sep=2pt] {$\1$} (21b);
    \draw[->] (11b) -- node[fill=white,inner sep=2pt] {$\0$} (22a);
    \draw[->] (11b) -- node[fill=white,inner sep=2pt] {$\1$} (23c);
    \draw[->] (20a) -- node[fill=white,inner sep=2pt] {$\0$} (30a);
    \draw[->] (20a) -- node[fill=white,inner sep=2pt] {$\1$} (31b);
    \draw[->] (21b) -- node[fill=white,inner sep=2pt] {$\0$} (32a);
    \draw[->] (21b) -- node[fill=white,inner sep=2pt] {$\1$} (33c);
    \draw[->] (22a) -- node[fill=white,inner sep=2pt] {$\0$} (34a);
    \draw[->] (22a) -- node[fill=white,inner sep=2pt] {$\1$} (35b);
    \draw[->] (23c) -- node[fill=white,inner sep=2pt] {$\0$} (36a);

    \draw[->] (-00c) -- node[fill=white,inner sep=1pt] {$\0$} (-11a);
    \draw[->] (-11a) -- node[fill=white,inner sep=1pt] {$\0$} (-20a);
    \draw[->] (-11a) -- node[fill=white,inner sep=1pt] {$\1$} (-21b);
    \draw[->] (-20a) -- node[fill=white,inner sep=1pt] {$\0$} (-30a);
    \draw[->] (-20a) -- node[fill=white,inner sep=1pt] {$\1$} (-31b);
    \draw[->] (-21b) -- node[fill=white,inner sep=1pt] {$\0$} (-32a);
    \draw[->] (-21b) -- node[fill=white,inner sep=1pt] {$\1$} (-33c);
\end{tikzpicture}
\end{center}
Let $\omega=\cdots abac|abacaba\cdots$ be the two-sided periodic point of $\psi_T$ of period $3$ with
seed $c|a$.
In the above figure, the representation $\rep_\omega(n)$ of $n$ 
labels the shortest path from the root of the tree to a node at $x$-position $n\in\N$.
The representation of small integers based on the periodic point $\omega$ is
illustrated in the following table.
    \[
\begin{array}{l|r||l|r}
    n & \rep_\omega(n) & n & \rep_\omega(n)\\
\hline
-7 & \1\0\1\0\1\0\0  & 0 & \0\\
-6 & \1\0\1\0\1\0\1  & 1 & \0\0\0\1\\
-5 & \1\0\1\0\1\1\0  & 2 & \0\0\1\0\\
-4 & \1\0\0\0        & 3 & \0\0\1\1\\
-3 & \1\0\0\1        & 4 & \0\1\0\0\\
-2 & \1\0\1\0        & 5 & \0\1\0\1\\
-1 & \1              & 6 & \0\1\1\0\\
\end{array}
\]
\end{example}

\begin{definition}[quotient, remainder]\label{def:quotient-p}
    Let $\eta:A^*\to A^*$ be a substitution and
    $u\in\Per(\eta)$ be a two-sided periodic point with growing seed
    $s = u_{-1}|u_0$.
    Let $p\geq1$ be the period of $u$.
    Let $n\in\Z\setminus\{-1,0\}$ be an integer
    and $k=k(n)$ be the unique integer and
    $(m_i,a_i)_{i=0,\dots,k-1}$ be the unique sequence
    obtained from Theorem~\ref{thm:dumont-thomas-right-p}
    (Theorem~\ref{thm:dumont-thomas-for-left-side-p}) applied on $u|_\Zp$
    ($u|_\Zn$)
    if $n\geq 1$
    (if $n\leq -2$, respectively)
    both with period $p$.
    We define the \emph{$u$-quotient} of $n$ as
    \[
        q=
		\begin{cases}
            |\eta^{k-p-1}(m_{k-1}) \eta^{k-p-2}(m_{k-2}) \cdots \eta^0(m_p)|,
			            & \text{ if } n \geq 1;\\
            |\eta^{k-p-1}(m_{k-1}) \eta^{k-p-2}(m_{k-2}) \cdots \eta^0(m_p)| -|\eta^{k-p}(u_{-1})|,
			            & \text{ if } n \leq -2;
		\end{cases}
    \]
    and the \emph{$u$-remainder} of $n$  as 
    $r=|\eta^{p-1}(m_{p-1}) \eta^{p-2}(m_{p-2})\cdots\eta^0(m_0)|$.
\end{definition}

Notice that the $u$-quotient $q$ and $u$-remainder $r$ of an integer
$n\in\Z\setminus\{-1,0\}$ fulfill the condition that if $n \geq1$ then $0\leq q < n$ and if $n \leq
-2$ then $n<q\leq -1$. Consequently, $|q| < |n|$.  Also, if $\eta$ is $d$-uniform,
then the $u$-quotient and $u$-remainder of $n$ correspond to the quotient and
remainder of the division of $n$ by~$d^p$. 

\begin{remark}
Note that if we know the $u$-quotient $q$ and the $u$-remainder $r$, we
can recover the sequence $|m_{p-1}|\odot|m_{p-2}|\odot\ldots\odot|m_0|$.
Indeed, it is equal to $\tail_{\eta,p,u_q}(r)$.
\end{remark}

\begin{lemma}\label{lem:rep_recurrence-p}
    Let $\eta:A^*\to A^*$ be a substitution and
    $u\in\Per(\eta)$ be a two-sided periodic point with growing seed.
    Let $p\geq1$ be the period of $u$.
    Let $n\in\Z\setminus\{-1,0\}$ be an integer.
    If $q\in\Z$ is the $u$-quotient 
    and $r\in\N$ is the $u$-remainder
    of $n$, then
    \begin{equation*}
        u_n  =  \eta^p(u_q)[r] 
        \quad
        \text{ and }
        \quad
        \rep_{u}(n) = \rep_{u}(q)\odot \tail_{\eta,p,u_q}(r).
    \end{equation*}
\end{lemma}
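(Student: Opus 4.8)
The plan is to reduce both identities to the observation that the canonical admissible sequence $(m_i,a_i)_{i=0,\dots,k-1}$ underlying $\rep_u(n)$ splits, once $\eta$ is iterated $p$ times, into a \emph{high part} (indices $i\geq p$) that encodes the $u$-quotient $q$ and a \emph{low part} (indices $i<p$) that encodes the $u$-remainder $r$. First I would fix the sign of $n$ and recall the decomposition from Theorem~\ref{thm:dumont-thomas-right-p} (if $n\geq1$) or Theorem~\ref{thm:dumont-thomas-for-left-side-p} (if $n\leq-2$): the relevant finite factor of $u$ equals $\eta^{k-1}(m_{k-1})\cdots\eta^0(m_0)$, which I factor as $\eta^p(M)\odot w$ with $M=\eta^{k-p-1}(m_{k-1})\cdots\eta^0(m_p)$ and $w=\eta^{p-1}(m_{p-1})\cdots\eta^0(m_0)$, so that $|w|=r$ is exactly the $u$-remainder.

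Second, I would show that the shifted sequence $(m_{i+p},a_{i+p})_{i=0,\dots,k-p-1}$ is precisely the canonical representation of $q$. It is $a$-admissible (resp. $b$-admissible) because admissibility is inherited by a contiguous block of indices, and, crucially, its non-degeneracy requirement (condition (i) of the relevant theorem evaluated at length $k-p$) is literally the non-degeneracy condition (i) already satisfied by $(m_i,a_i)$ at length $k$; on the negative side this is exactly \eqref{eq:left-periodic-condition}. Using the standard fact that $Ma_p$ is a prefix of $\eta^{k-p}(\text{seed})$ (as in the proof of Lemma~\ref{lem:automaton_q_r}) together with $u=\eta^{k-p}(u)$, I obtain that $M$ equals $u_0\cdots u_{q-1}$ (resp. the corresponding left factor $u_{-|\eta^{k-p}(b)|}\cdots u_{q-1}$), that $u_q=a_p$, and — after a length count, which is where the $-|\eta^{k-p}(b)|$ shift of Definition~\ref{def:quotient-p} enters on the negative side — that this sequence represents $q$. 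By the uniqueness in Theorem~\ref{thm:dumont-thomas-right-p}/Theorem~\ref{thm:dumont-thomas-for-left-side-p} this forces $k(q)=k-p$ and $\rep_u(q)=(\text{sign digit})\odot|m_{k-1}|\odot\cdots\odot|m_p|$. The boundary case $k=p$, where the high part is empty and $q\in\{0,-1\}$, is read off directly from Definition~\ref{def:rep-not-prolongable} using $\rep_u(0)=\0$ and $\rep_u(-1)=\1$; there $u_q$ is the seed letter.

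Third, I would treat the low part: the sequence $(m_i,a_i)_{i=0,\dots,p-1}$ is $u_q$-admissible, since its top condition ``$m_{p-1}a_{p-1}$ is a prefix of $\eta(u_q)$'' is the $i=p$ admissibility relation of the original sequence; its value is $r=\sum_{j=0}^{p-1}|\eta^j(m_j)|$, and $r<|\eta^p(u_q)|$ by Lemma~\ref{lem:admissible-sequence-sum}, so $r$ is a legitimate argument of $\tail_{\eta,p,u_q}$. Hence $\tail_{\eta,p,u_q}(r)=|m_{p-1}|\odot\cdots\odot|m_0|$, and concatenating with $\rep_u(q)$ from the previous step recovers $(\text{sign digit})\odot|m_{k-1}|\odot\cdots\odot|m_0|=\rep_u(n)$, which is the representation identity. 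For the letter identity I would note that both sides equal the bottom letter $a_0$: on the one hand $u_n=a_0$ because $(u_0\cdots u_{n-1})a_0$ (resp. its left analogue) is a prefix of $\eta^k(\text{seed})$, and on the other hand $\eta^p(u_q)[r]=\eta^p(a_p)[r]=a_0$ by Lemma~\ref{lem:automaton_q_r} applied to the $a_p$-admissible low part.

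The step I expect to be the main obstacle is the negative case: keeping the length bookkeeping exact (the prefix $u_{-|\eta^k(b)|}\cdots u_{n-1}$, the shift $-|\eta^{k-p}(b)|$, and the base case $q=-1$ corresponding to $k=p$) and verifying that the shifted sequence meets the non-degeneracy condition \eqref{eq:left-periodic-condition} at length $k-p$, so that Theorem~\ref{thm:dumont-thomas-for-left-side-p} identifies it as the canonical representation of $q$ and not some other, non-admissible alternative.
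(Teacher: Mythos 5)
Your proposal is correct and follows essentially the same route as the paper's proof: split the canonical admissible sequence for $n$ into the high part $(m_i,a_i)_{i\geq p}$, identified by uniqueness as the canonical sequence of the $u$-quotient $q$ (with $a_p=u_q$ obtained from periodicity of $u$), and the low part $(m_i,a_i)_{i<p}$, which yields $\tail_{\eta,p,u_q}(r)$ and the letter identity $u_n=a_0=\eta^p(u_q)[r]$. You are somewhat more explicit than the paper about verifying the non-degeneracy condition for the shifted sequence and about the boundary case $k=p$ (where $q\in\{0,-1\}$), but the argument is the same.
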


\begin{proof}
    Let $a,b\in A$ denote the letters $b=u_{-1}=b$ and $a=u_0$.
    Let $n\in\Z\setminus\{-1,0\}$
    and let $q$ be the $u$-quotient and $r$ the $u$-remainder of~$n$.

    Suppose $n\geq 1$.
    From Theorem~\ref{thm:dumont-thomas-right-p},
    there exists a unique $a$-admissible sequence
    $(m_i,a_i)_{i=0,\dots,k-1}$
    such that
    $u_0 \dots u_{n-1} = \eta^{k-1}(m_{k-1}) \dots \eta^0(m_0)$.
    Also, $\eta^{k-1}(m_{k-1}) \dots \eta^0(m_0)a_0$
    is a prefix of $\eta^{k}(a)$, which is a prefix of $u_0 u_1\cdots u_{|\eta^{k}(a)|-1}$,
    thus $u_n=a_0$.
    Since $u$ has period $p$, the word
    \[
        \eta^{k-p-1}(m_{k-1}) \eta^{k-p-2}(m_{k-2}) \cdots \eta^0(m_p) a_p
    \]
    is a prefix of $\eta^{k-p}(a)$, which is a prefix of $u_0 u_1\cdots u_{|\eta^{k-p}(a)|-1}$.
    Thus $a_p=u_q$.
    Since $\eta^{p-1}(m_{p-1})\cdots \eta^{0}(m_{0})a_0$ is a prefix of
    $\eta^p(a_p)$, we deduce that 
    $u_n = a_0 = \eta^p(a_p)[r] = \eta^p(u_q)[r]$.

    Suppose $n\leq -2$.
    From Theorem~\ref{thm:dumont-thomas-for-left-side-p},
    there exists a unique $b$-admissible sequence
    $(m_i,a_i)_{i=0,\dots,k-1}$ such that
    $u_{-|\eta^{k}(b)|} \dots u_{n-1} 
    = \eta^{k-1}(m_{k-1}) \dots \eta^0(m_0)$.
    Also, $\eta^{k-1}(m_{k-1}) \dots \eta^0(m_0)a_0$
    is a prefix of $\eta^{k}(b)$, 
    which is a prefix of
    $u_{-|\eta^{k}(b)|} \dots u_{-1}$, thus $u_n=a_0$.
    Since $u$ has period $p$, the word
    \[
        \eta^{k-p-1}(m_{k-1}) \eta^{k-p-2}(m_{k-2}) \cdots \eta^0(m_p) a_p
    \]
    is a prefix of $\eta^{k-p}(a)$, which is a prefix of
    $u_{-|\eta^{k-p}(b)|} \dots u_{-1}$, thus $a_p=u_q$.
    Since $\eta^{p-1}(m_{p-1})\cdots \eta^{0}(m_{0})a_0$ is a prefix of
    $\eta^p(a_p)$, we deduce that 
    $u_n = a_0 = \eta^p(a_p)[r] = \eta^p(u_q)[r]$.

    To finish the proof for both cases simultaneously, if $n\geq 1$ ($n\leq -2$), 
    applying Theorem~\ref{thm:dumont-thomas-right-p} 
    (Theorem~\ref{thm:dumont-thomas-for-left-side-p})
    on the $u$-quotient $q$ gives for $\mathtt{d} = \0$ ($\mathtt{d} = \1$)
    \[
    \rep_{u}(q) = \mathtt{d}\odot |m_{k-1}|\odot|m_{k-2}|\odot\ldots\odot|m_p|.
    \]
    As $n\geq 1$ if and only if $q\geq 0$, we have
    \begin{equation*}
        \begin{aligned}
            \rep_{u}(n)
            &=\mathtt{d}\odot|m_{k-1}|\odot|m_{k-2}|\odot\ldots\odot|m_p|\odot 
                             |m_{p-1}|\odot \ldots\odot |m_0| \\
            &= \rep_{u}(q)\odot |m_{p-1}|\odot \ldots\odot |m_0| 
            = \rep_{u}(q)\odot \tail_{\eta,p, u_q}(r).\qedhere
        \end{aligned}
    \end{equation*}
\end{proof}

\section{More examples}\label{sec:examples}

We consider the following substitutions:
\[
\begin{array}{ccccc}
\psi_{TM}: 
\left\{\begin{array}{l}
a\mapsto ab,\\
b\mapsto ba;
\end{array}\right.
    &
\psi_{2}: 
\left\{\begin{array}{l}
a\mapsto ab,\\
b\mapsto cb,\\
c\mapsto ac;
\end{array}\right.
    &
\varphi: 
\left\{\begin{array}{l}
a\mapsto ab,\\
b\mapsto a;
\end{array}\right.
    &
\psi_{T}: 
\left\{\begin{array}{l}
a\mapsto ab, \\
b\mapsto ac, \\
c\mapsto a;
\end{array}\right.
    &
\rho: 
\left\{\begin{array}{l}
a\mapsto ac, \\
b\mapsto cb, \\
c\mapsto c.
\end{array}\right.\\[6mm]
      \text{(Thue-Morse)}
    & \text{(some 2-uniform)}
    & \text{(Fibonacci)}
    & \text{(Tribonacci)}
    & \text{(non-primitive)}\\
\end{array}
\]
We let 
\begin{itemize}
    \item $\alpha\in\Per(\psi_{TM})$ denote the periodic point with the seed $a|a$ and period $2$;
    \item $\beta\in\Per(\psi_{2})$ denote the periodic point with the seed $b|a$ and period $1$;
    \item $\gamma,\delta\in\Per(\varphi)$ denote the periodic point of period 2
        with, respectively, the seeds $b|a$ and $a|a$;
    \item $\tau\in\Per(\psi_T)$ denote the periodic point with the seed $c|a$ and period $3$;
    \item $\chi\in\Per(\mu)$ denote the periodic point with the seed $c|a$ and period $1$
        of the substitution $\mu:a\mapsto abc, b\mapsto c, c\mapsto ac$ defined in the introduction;
    \item $\xi\in\Per(\rho)$ denote the periodic point with the seed $b|a$ and period $1$.
\end{itemize}
The~numeration systems derived from these two-sided periodic points
are shown in Table~\ref{tab:num_systems}.

\begin{table}[h]
\begin{center}
    \scriptsize
    \begin{tabular}{c||r|r|r|r|r|r|r}
        substitution & Thue-Morse & 2-uniform & Fibonacci & Fibonacci & Tribonacci & Introduction & non-primitive\\
        images & $(ab,ba)$ & $(ab,cb,ac)$ & $(ab, a)$ & $(ab, a)$ & $(ab,ac,a)$ & $(abc,c,ac)$ & $(ac,cb,c)$ \\
        \hline
        periodic point & $\alpha$ & $\beta$ & $\gamma$ & $\delta$ & $\tau$ & $\chi$ & $\xi$ \\
        seed           & $a|a$    & $b|a$   & $b|a$ & $a|a$ & $c|a$ & $c|a$ & $b|a$ \\
        period         & $2$      & $1$     & $2$  & $2$  & $3$  & $1$ & $1$ \\
        \hline
        $n$ & $\rep_{\alpha}(n)$ & $\rep_{\beta}(n)$ & $\rep_{\gamma}(n)$ & $\rep_{\delta}(n)$ & $\rep_{\tau}(n)$ & $\rep_{\chi}(n)$ & $\rep_{\xi}(n)$\\ 
        \hline
        &&&&&&\\[-2mm]
         10 &  \texttt{01010}   & \texttt{01010}  &  \texttt{0010010}   & \texttt{0010010} & \texttt{0001011} & \texttt{0202} & \texttt{01000000000} \\
         9  &  \texttt{01001}   & \texttt{01001}  &  \texttt{0010001}   & \texttt{0010001} & \texttt{0001010} & \texttt{0201} & \texttt{0100000000} \\
         8  &  \texttt{01000}   & \texttt{01000}  &  \texttt{0010000}   & \texttt{0010000} & \texttt{0001001} & \texttt{0200} & \texttt{010000000} \\
         7  &  \texttt{00111}   & \texttt{0111}   &  \texttt{01010}     & \texttt{01010}   & \texttt{0001000} & \texttt{0101} & \texttt{01000000} \\
         6  &  \texttt{00110}   & \texttt{0110}   &  \texttt{01001}     & \texttt{01001}   & \texttt{0110}    & \texttt{0100} & \texttt{0100000}    \\
         5  &  \texttt{00101}   & \texttt{0101}   &  \texttt{01000}     & \texttt{01000}   & \texttt{0101}    & \texttt{021}  & \texttt{010000}    \\
         4  &  \texttt{00100}   & \texttt{0100}   &  \texttt{00101}     & \texttt{00101}   & \texttt{0100}    & \texttt{020}  & \texttt{01000}    \\
         3  &  \texttt{011}     & \texttt{011}    &  \texttt{00100}     & \texttt{00100}   & \texttt{0011}    & \texttt{010}  & \texttt{0100}    \\
         2  &  \texttt{010}     & \texttt{010}    &  \texttt{010}       & \texttt{010}     & \texttt{0010}    & \texttt{02}   & \texttt{010}    \\
         1  &  \texttt{001}     & \texttt{01}     &  \texttt{001}       & \texttt{001}     & \texttt{0001}    & \texttt{01}   & \texttt{01}    \\
         0  &  \texttt{0}       & \texttt{0}      &  \texttt{0}         & \texttt{0}       & \texttt{0}       & \texttt{0}    & \texttt{0}       \\
        -1  &  \texttt{1}       & \texttt{1}      &  \texttt{1}         & \texttt{1}       & \texttt{1}       & \texttt{1}    & \texttt{1}       \\
        -2  &  \texttt{110}     & \texttt{10}     &  \texttt{100}       & \texttt{101}     & \texttt{1010}    & \texttt{10}   & \texttt{10}    \\
        -3  &  \texttt{101}     & \texttt{101}    &  \texttt{10010}     & \texttt{100}     & \texttt{1001}    & \texttt{102}  & \texttt{100}    \\
        -4  &  \texttt{100}     & \texttt{100}    &  \texttt{10001}     & \texttt{10101}   & \texttt{1000}    & \texttt{101}  & \texttt{1000}    \\
        -5  &  \texttt{11011}   & \texttt{1011}   &  \texttt{10000}     & \texttt{10100}   & \texttt{1010110} & \texttt{100}  & \texttt{10000} \\
        -6  &  \texttt{11010}   & \texttt{1010}   &  \texttt{1001010}   & \texttt{10010}   & \texttt{1010101} & \texttt{1021} & \texttt{100000} \\
        -7  &  \texttt{11001}   & \texttt{1001}   &  \texttt{1001001}   & \texttt{10001}   & \texttt{1010100} & \texttt{1020} & \texttt{1000000} \\
        -8  &  \texttt{11000}   & \texttt{1000}   &  \texttt{1001000}   & \texttt{10000}   & \texttt{1010011} & \texttt{1010} & \texttt{10000000} \\
        -9  &  \texttt{10111}   & \texttt{10111}  &  \texttt{1000101}   & \texttt{1010101} & \texttt{1010010} & \texttt{1002} & \texttt{100000000} \\
        -10 &  \texttt{10110}   & \texttt{10110}  &  \texttt{1000100}   & \texttt{1010100} & \texttt{1010001} & \texttt{1001} & \texttt{1000000000} \\
    \end{tabular}
\end{center}
    \caption{Numeration systems for periodic points
    $\alpha$, $\beta$, $\gamma$, $\delta$, $\tau$, $\chi$, $\xi$ with given seed.}
    \label{tab:num_systems}
\end{table}

\section{Periodic points as Automatic Sequences}\label{sec:periodic-as-automatic}

Let $\eta:A^*\to A^*$ be a substitution and
$u\in\Per(\eta)$ be a two-sided periodic point with growing seed
$s = u_{-1}|u_0$.
Let $\Dcal=\left\{\0,...,\max_{c\in A}|\eta(c)|-1\right\}$.
We associate an automaton $\Acal_{\eta,s}$ with $(\eta,s)$
by adding a new state $\texttt{start}$ and two additional
edges to the automaton $\Acal_{\eta, a}$ defined in~\cite{MR2742574}.
The automaton $\Acal_{\eta,s} = (A\cup\left\{\texttt{start}\right\},\Dcal,
\delta, \texttt{start}, A)$ has the transition function 
$\delta:A\cup\{\texttt{start}\}\to A$ such that 
\begin{itemize}
    \item $\delta(\texttt{start},\0) = s_0 =u_0$, $\quad\delta(\texttt{start},\1) =s_{-1}= u_{-1}$,
    \item for every $c,d\in A$, 
        every $w=w_0 w_1\dots w_{\ell-1}\in A^\ell$ and every $i\in \Dcal$,
        it holds that
        $\delta(c,i)=d$ if and only if $\eta(c)=w$ and $w_i=d$. 
\end{itemize}

Examples of automata associated to the Fibonacci substitution are shown in
Figure~\ref{fig:automataZ}.
\begin{figure}[h]
	\begin{center}
		\begin{tikzpicture}[auto]
		\begin{scope}[xshift=-.1cm]
		\node[draw,circle] (A) at (0,0) {$a$};
		\node[draw,circle] (B) at (1.5,0) {$b$};
		\draw[bend left,->] (A) to node {\scriptsize 1} (B);
		\draw[bend left,->] (B) to node {\scriptsize 0} (A);
		\draw[loop left,->] (A) to node {\scriptsize 0} (A);
		\draw[<-] (A) -- ++ (0,1) node[above] {};
		\draw[->] (A) -- ++ (0,-0.5);
		\draw[->] (B) -- ++ (0,-0.5);
		\end{scope}
		\begin{scope}[xshift=3.5cm]
		\node[draw,circle] (A) at (0,0) {$a$};
		\node[draw,circle] (B) at (1.5,0) {$b$};
		\draw[bend left,->] (A) to node {\scriptsize 1} (B);
		\draw[bend left,->] (B) to node {\scriptsize 0} (A);
		\draw[loop left,->] (A) to node {\scriptsize 0} (A);
		\draw[<-] (B) -- ++ (0,1) node[above] {};
		\draw[->] (A) -- ++ (0,-0.5);
		\draw[->] (B) -- ++ (0,-0.5);
		\end{scope}
		\begin{scope}[xshift=7cm]
		\node[draw] (S) at (.75,1.2) {$\texttt{start}$};
		\node[draw,circle] (A) at (0,0) {$a$};
		\node[draw,circle] (B) at (1.5,0) {$b$};
		\draw[bend left=20,->] (A) to node {\scriptsize 1} (B);
		\draw[bend left=20,->] (B) to node {\scriptsize 0} (A);
		\draw[bend right,->] (S) to node[left] {\scriptsize 0} (A);
		\draw[bend left,->] (S) to node[right] {\scriptsize 1} (B);
		\draw[<-] (S) -- ++ (0,.5) node[above] {};
		\draw[loop left,->] (A) to node {\scriptsize 0} (A);
		\draw[->] (A) -- ++ (0,-0.5);
		\draw[->] (B) -- ++ (0,-0.5);
		\end{scope}
		\end{tikzpicture}
	\end{center}
	\caption{Automata $\Acal_{\varphi,a}$, $\Acal_{\varphi,b}$ and 
		$\Acal_{\varphi,s}$ for $\varphi:a\mapsto ab$, $b\mapsto a$ and $s=b|a$.}
	\label{fig:automataZ}
\end{figure}

If the seed is $s=b|a$, the automaton $\Acal_{\eta,s}$
is related to the usual automata
$\Acal_{\eta,a}$ and $\Acal_{\eta,b}$ according to the following 
equalities
for every $w\in\Dcal^*$:
\begin{equation}
\label{eq:automata-relations}
    \Acal_{\eta,s}(\0\odot w) = \Acal_{\eta,a}(w)
    \qquad
    \text{and}
    \qquad
    \Acal_{\eta,s}(\1\odot w) = \Acal_{\eta,b}(w).
\end{equation}
Also if $\Acal_{\eta,s}(w) = a$ for some $w\in\Dcal^+$, then
for every $u\in\Dcal^*$
\begin{equation}
\label{eq:automata-relations-2}
    \Acal_{\eta,a}(u) = \Acal_{\eta,s}(w\odot u).
\end{equation}

A theorem of Cobham \cite{MR457011} says that 
a sequence $u=(u_n)_{n\geq0}$ is $k$-automatic
with $k\geq2$
if and only if it is the image,
under a coding, of a fixed point of a $k$-uniform morphism 
\cite[\S 6]{MR1997038}.
It was extended to 
abstract numeration systems based on regular languages 
which includes numeration systems based on
non-uniform morphisms \cite{zbMATH01916667};
see also 
\cite[\S 3]{MR2742574}.
The following result extends Cobham's theorem to the case of two-sided periodic
points of non-uniform substitutions.

\begin{theorem}\label{th:periodic_point_automatic}
    Let $\eta:A^*\to A^*$ be a substitution and
    $u\in\Per(\eta)$ be a two-sided periodic point with growing seed
    $s = u_{-1}|u_0$.
	Then for every $n\in\Z$
	\[
		u_n = \Acal_{\eta,s}(\rep_{u}(n)).
	\]
\end{theorem}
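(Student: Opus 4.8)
The plan is to prove the statement by splitting on the sign of $n$ and reducing to the already-established behaviour of the single-letter automata $\Acal_{\eta,a}$ and $\Acal_{\eta,b}$, where $a=u_0$ and $b=u_{-1}$. The cases $n=0$ and $n=-1$ are immediate by direct inspection: $\rep_u(0)=\0$ and $\delta(\texttt{start},\0)=u_0=u_0$, while $\rep_u(-1)=\1$ and $\delta(\texttt{start},\1)=u_{-1}=u_{-1}$. So the content is in the cases $n\geq 1$ and $n\leq -2$.

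First I would treat $n\geq 1$. Here Definition~\ref{def:rep-not-prolongable} gives $\rep_u(n)=\0\odot\tail_{\eta,k,u_0}(n)$, where $k=k(n)$ is the integer from Theorem~\ref{thm:dumont-thomas-right-p} and $\tail_{\eta,k,u_0}(n)$ records the word $|m_{k-1}|\odot\cdots\odot|m_0|$ of the $a$-admissible sequence associated to the length-$n$ prefix $m=u_0\cdots u_{n-1}$ of $\eta^k(a)$. By the relation~\eqref{eq:automata-relations}, feeding $\0\odot w$ into $\Acal_{\eta,s}$ equals feeding $w$ into $\Acal_{\eta,a}$, so it suffices to show $\Acal_{\eta,a}(\tail_{\eta,k,u_0}(n))=u_n$. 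This is precisely Lemma~\ref{lem:automaton_q_r}, applied with $x=u_0=a$ and $p=k$: it yields $\Acal_{\eta,a}(\tail_{\eta,k,a}(\ell))=\eta^k(a)[\ell]$ for $\ell=n$, and since $\eta^k(a)$ is a prefix of $u|_\Zp$ (because $u$ is a periodic point with growing seed $u_0=a$, so $\eta^k(a)\in aA^+$ agrees with $u_0u_1\cdots$), we have $\eta^k(a)[n]=u_n$. Chaining these gives $\Acal_{\eta,s}(\rep_u(n))=u_n$.

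The case $n\leq -2$ is the symmetric one and the main obstacle, since the left-infinite indexing requires a little more care. Here $\rep_u(n)=\1\odot\tail_{\eta,k,u_{-1}}(n)$, and by~\eqref{eq:automata-relations} this reduces to showing $\Acal_{\eta,b}(\tail_{\eta,k,u_{-1}}(n))=u_n$ with $b=u_{-1}$. The subtlety is that the $\tail$ map and Lemma~\ref{lem:automaton_q_r} are stated for prefixes of $\eta^p(x)$ read with \emph{nonnegative} positions, whereas Theorem~\ref{thm:dumont-thomas-for-left-side-p} encodes $n\leq -2$ via the prefix $m=u_{-|\eta^{k}(b)|}\cdots u_{n-1}$ of $\eta^k(b)$, whose length is $|m|=|\eta^k(b)|+n$. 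The key point is that this realizes $n$ as the position $\ell=|\eta^k(b)|+n$ inside $\eta^k(b)$, i.e.\ $\eta^k(b)[\ell]=u_n$ because $\eta^k(b)$ is the suffix $u_{-|\eta^k(b)|}\cdots u_{-1}$ of $u|_\Zn$ (using that $b$ is growing and $u$ periodic, so $\eta^k(b)\in A^+b$). Thus $\tail_{\eta,k,u_{-1}}(n)$ is exactly $\tail_{\eta,k,b}(\ell)$ for this $\ell$, and Lemma~\ref{lem:automaton_q_r} gives $\Acal_{\eta,b}(\tail_{\eta,k,b}(\ell))=\eta^k(b)[\ell]=u_n$, as desired.

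In short, both nontrivial cases collapse to a single invocation of Lemma~\ref{lem:automaton_q_r} once the $\0$/$\1$ prefix is stripped using~\eqref{eq:automata-relations}; the only genuine care needed is the bookkeeping in the negative case to recognize $n$ as the position $|\eta^k(b)|+n$ in $\eta^k(b)$ and to confirm that this letter of $\eta^k(b)$ coincides with $u_n$ via the suffix property of periodic points. I would also note that the relations~\eqref{eq:automata-relations} are exactly what the augmented \texttt{start} state was built to provide, so no new properties of $\Acal_{\eta,s}$ are required beyond its defining transitions.
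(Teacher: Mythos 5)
Your proof is correct, but it takes a different route from the paper's. The paper proves the statement by induction on $|n|$: it introduces the $u$-quotient $q$ and $u$-remainder $r$ of $n$ (Definition~\ref{def:quotient-p}), shows via Lemma~\ref{lem:rep_recurrence-p} that $u_n=\eta^p(u_q)[r]$ and $\rep_u(n)=\rep_u(q)\odot\tail_{\eta,p,u_q}(r)$, and then peels off the last $p$ digits at each step, invoking Lemma~\ref{lem:automaton_q_r} only with the fixed exponent $p$ together with Equation~\eqref{eq:automata-relations-2}. You instead process all $k=k(n)$ digits in one shot: you apply Lemma~\ref{lem:automaton_q_r} with exponent $k$ (which is legitimate, since that lemma holds for an arbitrary power, not just the period), strip the sign digit via Equation~\eqref{eq:automata-relations}, and identify $u_n$ with $\eta^k(u_0)[n]$ (resp.\ $\eta^k(u_{-1})[\,|\eta^k(u_{-1})|+n\,]$) using the fact that $\eta^k(u_0)$ is a prefix of $u|_\Zp$ and $\eta^k(u_{-1})$ a suffix of $u|_\Zn$ because $p$ divides $k$. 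Your handling of the negative case correctly resolves the paper's mild abuse of notation in writing $\tail_{\eta,k,u_{-1}}(n)$ for negative $n$. What your approach buys is a shorter, non-inductive argument that bypasses the quotient/remainder machinery entirely; what the paper's approach buys is that the recursion $\rep_u(n)=\rep_u(q)\odot\tail_{\eta,p,u_q}(r)$ is of independent interest (it exhibits the digit-by-digit structure of the numeration system, analogous to Euclidean division by $d^p$ in the uniform case) and is reused implicitly elsewhere. Both arguments ultimately rest on the same foundation, namely the uniqueness of admissible sequences and Lemma~\ref{lem:automaton_q_r}.
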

\begin{proof}
    If $n \in\{0,-1\}$ then by definition 
    we have
    $u_n = s_n = \Acal_{\eta,s}(\rep_{u}(n))$.

    Let $n \in\Z\setminus \{0,-1\}$. 
    Induction hypothesis: for every $m\in\Z$ such that $|m| < |n|$
	it holds that $x_m = \Acal_{\eta,s}(\rep_{u}(m))$.
	Let $q$ be the $u$-quotient and $r$ the $u$-remainder
	of $n$. As $|q| < |n|$, $q$ fulfills the induction hypothesis, i.e.,
    $u_q=\Acal_{\eta,s}(\rep_{u}(q))$.
    Let $p\geq1$ be the period of $u$.
    From Lemma~\ref{lem:rep_recurrence-p} we have
	$u_n  = \eta^p(u_q)[r]$ and 
    $\rep_{u}(n) = \rep_{u}(q)\odot\tail_{\eta,p,u_q}(r)$. 
    Using Lemma~\ref{lem:automaton_q_r} and 
    Equation~\eqref{eq:automata-relations-2}, we have
	\begin{equation*}
		\begin{aligned}
		u_n &= \eta^p(u_q)[r]
             = \Acal_{\eta,u_q}(\tail_{\eta,p,u_q}(r)) \\
            &= \Acal_{\eta,s}(\rep_{u}(q)\odot \tail_{\eta,p,u_q}(r))
			 = \Acal_{\eta,s}(\rep_{u}(n)).\qedhere
		\end{aligned}
	\end{equation*}
\end{proof}

\section{Numeration systems for $\Z^d$ based on periodic points}\label{sec:NS-for-Zd}

A numeration system for $\Z^d$ can be deduced from the numeration system for
$\Z$ based on a~periodic point. Since not all integers are represented by words
of the same length, we propose here a way to pad them to a common length.

Let $\eta:A^*\to A^*$ be a substitution and $u\in \Per(\eta)$ with period $p\geq 1$
and growing seed.
Let $\Wmin$ and $\Wmax$ be the following minimum and the maximum element
under the $\tail$ map with particular parameters:
\begin{equation*}
\begin{aligned}
    \Wmin &= \tail_{\eta,p,u_{0}}(0) = \0^p,\\
    \Wmax &= \tail_{\eta,p,u_{-1}}(|\eta^p(u_{-1})|-1).
\end{aligned}
\end{equation*}
The words $\Wmin$ and $\Wmax$ play the role of neutral words in the
numeration system as illustrated in the next lemma.
Below the words $\Wmin$ and $\Wmax$ are concatenated with others words
from $\Dcal^*$ using the binary operation $\odot$,
which is not explicitly written to avoid heavy notation.

\begin{lemma}
    Let $\eta:A^*\to A^*$ be a substitution and
    $u\in\Per(\eta)$ be a two-sided periodic point with growing seed
    $s = u_{-1}|u_0$.
    Let $w\in\Lcal(\Acal_{\eta,s})$. Then
    \[
		\Acal_{\eta,s}(w) =
        \begin{cases}
            \Acal_{\eta,s}(\0 (\Wmin)^i v), & \text{ if } w=\0 v,\\
            \Acal_{\eta,s}(\1 (\Wmax)^i v), & \text{ if } w=\1 v,
        \end{cases}
    \]
    for every integer $i\geq0$.
\end{lemma}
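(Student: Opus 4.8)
The plan is to reduce the whole statement to two \emph{loop identities}: reading $\Wmin$ from the state $u_0$ returns to $u_0$, and reading $\Wmax$ from the state $u_{-1}$ returns to $u_{-1}$. In symbols, writing $a=u_0$ and $b=u_{-1}$ as in the seed $s=b|a$, I would first prove
$\Acal_{\eta,a}(\Wmin)=a$ and $\Acal_{\eta,b}(\Wmax)=b$. Once these are in hand, the lemma is a short induction on $i$ using the relations~\eqref{eq:automata-relations} that split the start transition.

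First I would establish the two loop identities from Lemma~\ref{lem:automaton_q_r} together with the periodicity $\eta^p(u)=u$. Applying Lemma~\ref{lem:automaton_q_r} with $x=u_0$ and $\ell=0$ gives $\Acal_{\eta,u_0}(\tail_{\eta,p,u_0}(0))=\eta^p(u_0)[0]$, and since $\Wmin=\tail_{\eta,p,u_0}(0)$ by definition, the left side is $\Acal_{\eta,u_0}(\Wmin)$. The key point is then to read off $\eta^p(u_0)[0]=u_0$ from $\eta^p(u)=u$: on the right of the origin the periodic point begins with $\eta^p(u_0)$, so its first letter is $u_0$. Symmetrically, applying Lemma~\ref{lem:automaton_q_r} with $x=u_{-1}$ and $\ell=|\eta^p(u_{-1})|-1$ gives $\Acal_{\eta,u_{-1}}(\Wmax)=\eta^p(u_{-1})[\,|\eta^p(u_{-1})|-1\,]$, and the same periodicity read off at position $-1$ (where $u$ ends with $\eta^p(u_{-1})$) shows this last letter equals $u_{-1}$. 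Hence $\Acal_{\eta,b}(\Wmax)=b$.

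With the loop identities proved, I would finish as follows. For $w=\0 v$, the relation $\Acal_{\eta,s}(\0\odot x)=\Acal_{\eta,a}(x)$ from~\eqref{eq:automata-relations} reduces the claim to $\Acal_{\eta,a}((\Wmin)^i v)=\Acal_{\eta,a}(v)$. Since $\delta(a,\Wmin)=a$, an immediate induction on $i$ yields $\delta(a,(\Wmin)^i)=a$, so $\delta(a,(\Wmin)^i v)=\delta(\delta(a,(\Wmin)^i),v)=\delta(a,v)$ by the extension rule for $\delta$; this is exactly the desired equality. The case $w=\1 v$ is identical after replacing $a,\Wmin$ by $b,\Wmax$ and using the second relation in~\eqref{eq:automata-relations}. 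I do not expect a genuine obstacle: the only delicate point is the faithful translation of $\eta^p(u)=u$ into the boundary-letter facts $\eta^p(u_0)[0]=u_0$ and $\eta^p(u_{-1})[\,|\eta^p(u_{-1})|-1\,]=u_{-1}$, and the loop identities themselves guarantee that every transition invoked in the induction is defined, so the partiality of $\delta$ causes no trouble.
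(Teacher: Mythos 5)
Your proposal is correct and follows essentially the same route as the paper: both arguments rest on the loop identities $\Acal_{\eta,u_0}(\Wmin)=u_0$ and $\Acal_{\eta,u_{-1}}(\Wmax)=u_{-1}$ combined with the relations~\eqref{eq:automata-relations} and~\eqref{eq:automata-relations-2}. Your derivation of the loop identities via Lemma~\ref{lem:automaton_q_r} and the periodicity $\eta^p(u)=u$ is just a more explicit justification of what the paper states directly.
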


\begin{proof}
    Let $i\geq0$ be an integer
    and $p\geq1$ be the period of $u$.
    Let $w\in\Lcal(\Acal_{\eta,s})$. 

    Suppose that $w$ starts with letter $\0$.
    Let $v\in\Dcal^*$ such that $w=\0v$.
    We have $\Acal_{\eta,u_0}(\0^p) = u_0$.
    Thus $\Acal_{\eta,s}(\0(\Wmin)^i) = u_0$.
    From Equation~\eqref{eq:automata-relations}
    and Equation~\eqref{eq:automata-relations-2} we obtain
    \[
		\Acal_{\eta,s}(w)
		= \Acal_{\eta,s}(\0 v)
        \stackrel{\eqref{eq:automata-relations}}{=} \Acal_{\eta,u_0}(v)
        \stackrel{\eqref{eq:automata-relations-2}}{=} \Acal_{\eta,s}(\0(\Wmin)^iv).
    \]

    Suppose that $w$ starts with letter $\1$.
    Let $v\in\Dcal^*$ such that $w=\1v$.
    We have $\Acal_{\eta,u_{-1}}(\Wmax) = u_{-1}$.
    Thus $\Acal_{\eta,s}(\1(\Wmax)^i) = u_{-1}$.
    From Equation~\eqref{eq:automata-relations}
    and Equation~\eqref{eq:automata-relations-2} we obtain
    \[
		\Acal_{\eta,s}(w)
		= \Acal_{\eta,s}(\1 v)
        \stackrel{\eqref{eq:automata-relations}}{=} \Acal_{\eta,u_{-1}}(v)
        \stackrel{\eqref{eq:automata-relations-2}}{=} \Acal_{\eta,s}(\1(\Wmax)^iv).\qedhere
    \]
\end{proof}

It is useful to pad words to a certain length using neutral words as follows
using a pad function.
Let $s = u_{-1}|u_0$.
Let $w\in\Lcal_{\ell p+1}(\Acal_{\eta,s})$ for some $\ell\in\N$.
Let $t\in\N$ such that $t\geq |w|$ and $t \bmod p = 1$.
    We define
    \[
        \pad_t(w)
        =
        \begin{cases}
            \0 (\Wmin)^m v,  & \text{ if } w=\0 v;\\
            \1 (\Wmax)^m v,  & \text{ if } w=\1 v,
        \end{cases}
    \]
where $m=(t-|w|)/p$.
The padding map can be used to pad words so that they all
have the same length. This allows us to represent coordinates in
$\Z^d$ in dimension $d\geq1$.

\begin{definition}[Numeration system for $\Z^d$]\label{def:num-sys-Zd}
    Let $\eta:A^*\to A^*$ be a substitution and
    $u_1,u_2,\dots,u_d\in\Per(\eta)$ be periodic points with growing seeds and of the same period.
	For every $\bn=(n_1,n_2,\dots,n_d)\in\Z^d$,
	we define
	\[
	\rep_u(\bn)=\left(
	\begin{array}{c}
        \pad_t(\rep_{u_1}(n_1))\\
        \pad_t(\rep_{u_2}(n_2))\\
        \dots\\
        \pad_t(\rep_{u_d}(n_d))
	\end{array}
	\right)
    \in\{\0,\1\}^d(\Dcal^d)^*,
	\]
    where $t=\max\{|\rep_{u_i}(n_i)|\colon 1\leq i\leq d\}$.
\end{definition}

\begin{remark}
    In Definition~\ref{def:num-sys-Zd},
    considering different periodic points with the same period of the same
    1-dimensional substitution in each dimension
    can be necessary for
    instance to describe the different 2-dimensional periodic points of
    2-dimensional substitutions. This is what happens when one wants to
    describe the 8 configurations of Wang tiles presented in \cite{MR3978536} 
    which are the periodic points of a 2-dimensional substitution.
\end{remark}

Of course, it is possible and simpler to use the same periodic point to represent the
entries of an integer vector.
This is what is done in the example that follows.

\begin{example} 
    Consider the Tribonacci substitution $\psi_{T}: a\mapsto ab, b\mapsto ac,
    c\mapsto a$ as in Example~\ref{ex:Tribo}.
    Let $\tau\in\Per(\psi_{T})$ be the periodic point 
    with period $p=3$ and seed $c|a$.
    We have
\begin{align*}
    \Wmin 
    &= \tail_{\psi_T,p,u_{0}}(0) 
    = \tail_{\psi_T,3,a}(0) 
    = \0^3
    = \0\0\0,\\
    \Wmax 
    &= \tail_{\psi_T,p,u_{-1}}(|\psi_T^p(u_{-1})|-1)
    = \tail_{\psi_T,3,c}(|\psi_T^3(c)|-1)
    = \tail_{\psi_T,3,c}(3)
    = \0\1\1.
\end{align*}
The words $\Wmin$ and $\Wmax$ can be used to pad words to a given length which
    is a multiple of 3 plus 1.
    For instance, we illustrate in the following table the padding of
    the Dumont--Thomas representation based on the periodic point $\tau$.
    The representation of integers from $-10$ to $10$ is padded to words of length 7.
    {\scriptsize
    \begin{center}
    \begin{tabular}{c||r|c}
        \hline
        $n$ & $\rep_{\tau}(n)$   & $\pad_7(\rep_{\tau}(n))$ \\
        \hline
        &&\\[-2mm]
         10 &   \texttt{0001011} &   \texttt{0001011} \\
         9  &   \texttt{0001010} &   \texttt{0001010} \\
         8  &   \texttt{0001001} &   \texttt{0001001} \\
         7  &   \texttt{0001000} &   \texttt{0001000} \\
         6  &   \texttt{0110}    &   \texttt{0000110} \\
         5  &   \texttt{0101}    &   \texttt{0000101} \\
         4  &   \texttt{0100}    &   \texttt{0000100} \\
         3  &   \texttt{0011}    &   \texttt{0000011} \\
         2  &   \texttt{0010}    &   \texttt{0000010} \\
         1  &   \texttt{0001}    &   \texttt{0000001} \\
         0  &   \texttt{0}       &   \texttt{0000000} \\
        -1  &   \texttt{1}       &   \texttt{1011011} \\
        -2  &   \texttt{1010}    &   \texttt{1011010} \\
        -3  &   \texttt{1001}    &   \texttt{1011001} \\
        -4  &   \texttt{1000}    &   \texttt{1011000} \\
        -5  &   \texttt{1010110} &   \texttt{1010110} \\
        -6  &   \texttt{1010101} &   \texttt{1010101} \\
        -7  &   \texttt{1010100} &   \texttt{1010100} \\
        -8  &   \texttt{1010011} &   \texttt{1010011} \\
        -9  &   \texttt{1010010} &   \texttt{1010010} \\
        -10 &   \texttt{1010001} &   \texttt{1010001} \\
    \end{tabular}
    \end{center}
    }
    Thus, the coordinate $(-1,8)\in\Z^2$ can thus be written as a word
	\[
	\rep_\tau(-1,8)
    =\left(\begin{array}{c}
        \pad_7(\rep_\tau(-1))\\
        \pad_7(\rep_\tau(8))\\
	\end{array}\right)
    =\left(\begin{array}{c}
        \1\0\1\1\0\1\1\\
        \0\0\0\1\0\0\1
	\end{array}\right)
    \in
    \left\{
    \left(\begin{array}{c} \0 \\ \0 \end{array}\right),
    \left(\begin{array}{c} \0 \\ \1 \end{array}\right),
    \left(\begin{array}{c} \1 \\ \0 \end{array}\right),
    \left(\begin{array}{c} \1 \\ \1 \end{array}\right)
    \right\}^*
	\]
    whose alphabet of size 4 is the Cartesian product of the alphabet
    $\{\0,\1\}$ with itself.
\end{example}

\section{A Total Order}\label{sec:total-order}

In this section, we define a total order on $\{\0,\1\}\Dcal^*:=\{\0,\1\}\odot\Dcal^*$
and we show that $\rep_{u}$ is increasing with respect to this order.

The radix order on a language $L\subset \Dcal^*$ is a total order $(L,<_{rad})$ 
such that $u <_{rad} v$ if and only if
    $|u| < |v|$ or $|u| = |v|$ and $u <_{lex} v$,
    where $<_{lex}$ denotes the lexicographic order.
For example, over the alphabet $\{\0,\1\}$, the minimum elements for the radix order are:
\[
    \varepsilon
    <_{rad} \0 
    <_{rad} \1 
    <_{rad} \0\0 
    <_{rad} \0\1 
    <_{rad} \1\0
    <_{rad} \1\1
    <_{rad} \0\0\0
    <_{rad} \0\0\1
    <_{rad} \cdots.
\]
We define the reversed-radix order as a total order such that $u <_{rev} v$ if and only if
    $|u| > |v|$ or $|u| = |v|$ and $u <_{lex} v$.
For example, over the alphabet $\{\0,\1\}$, 
the maximum elements for the reverse-radix order are:
\[
    \cdots
    <_{rev} \1\1\0
    <_{rev} \1\1\1
    <_{rev} \0\0 
    <_{rev} \0\1 
    <_{rev} \1\0
    <_{rev} \1\1
    <_{rev} \0
    <_{rev} \1
    <_{rad} \varepsilon.
\]
Let us stress that the reversed-radix order behaves in the same manner
as the radix order on the words of the same length.
Also if $L$ has infinite cardinality, then $L$ has no maximal element for the radix order
and has no minimum element for the reverse-radix order.

The radix order and the reverse-radix order can be used jointly to define a
total order on a~language with no minimum nor maximum element. Below, we use the first letter
of a word in $\{\0,\1\}\Dcal^*$ to split the two cases.

\begin{definition}[total order $\prec$]\label{def:order}
    For every $u,v \in \{\0,\1\}\Dcal^*$,
	we define $u \prec v$ if and only if
	\begin{itemize}
		\item $u\in \1 \Dcal^*$ and $v\in \0 \Dcal^*$, or
        \item $u,v \in \0 \Dcal^*$ and $u <_{rad} v$, or
        \item $u,v \in \1 \Dcal^*$ and $u <_{rev} v$.
	\end{itemize}
\end{definition}
Thus, if $\Dcal=\{\0,\1\}$, we get
\[
    \cdots
    \prec \1\0\0
    \prec \1\0\1
    \prec \1\1\0
    \prec \1\1\1
    \prec \1\0
    \prec \1\1
    \prec \1
    \prec \0 
    \prec \0\0 
    \prec \0\1 
    \prec \0\0\0
    \prec \0\0\1
    \prec \0\1\0
    \prec \0\1\1
    \prec \cdots.
\]
The total order $\prec$ makes sense with respect to Dumont--Thomas complement numeration
systems for $\Z$ because of the following result.

\begin{proposition}\label{prop:rep-u-is-increasing}
    Let $\eta:A^*\to A^*$ be a substitution and
    $u\in\Per(\eta)$ be a two-sided periodic point with growing seed.
    The map
    $\rep_{u}:\Z\to\{\0,\1\}\Dcal^*$
    is increasing with respect to the order $\prec$ on $\{\0,\1\}\Dcal^*$.
\end{proposition}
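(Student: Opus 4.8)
The plan is to prove monotonicity by comparing $\rep_u(m)$ and $\rep_u(n)$ for arbitrary $m<n$, splitting into three cases according to the signs of the two integers. When $m<0\le n$ the representations begin with $\1$ and $\0$ respectively, so $\rep_u(m)\prec\rep_u(n)$ is immediate from the first bullet of Definition~\ref{def:order}; this already disposes of the boundary interactions involving $\rep_u(0)=\0$ and $\rep_u(-1)=\1$ across the sign change. The two remaining cases, $0\le m<n$ and $m<n\le -1$, are where the work lies: the first is decided by the radix order on $\0\Dcal^*$ and the second by the reverse-radix order on $\1\Dcal^*$. The degenerate values $n=0$ and $n=-1$ fit the general scheme by reading them off as the case $k=0$ with empty tail, so that $\rep_u(0)=\0\odot\varepsilon$ and $\rep_u(-1)=\1\odot\varepsilon$.

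The key preliminary observation is that the exponent $k=k(n)$ furnished by Theorem~\ref{thm:dumont-thomas-right-p} (resp.\ Theorem~\ref{thm:dumont-thomas-for-left-side-p}) is monotone in $n$. For $n\ge 1$ the defining inequality $|\eta^{k-p}(u_0)|\le n<|\eta^{k}(u_0)|$ shows that the intervals $[\,|\eta^{p(\ell-1)}(u_0)|,\,|\eta^{p\ell}(u_0)|)$ partition $[1,\infty)$, so $k(n)$ is nondecreasing; symmetrically, for $n\le -2$ the inequality $-|\eta^{k}(u_{-1})|\le n<-|\eta^{k-p}(u_{-1})|$ shows that $k(n)$ is nonincreasing as $n$ increases. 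I will use this to settle all comparisons of unequal length instantly, and reduce the equal-length comparisons to Lemma~\ref{lem:lexico-order-on-admissible-seq}.

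For $0\le m<n$ both words lie in $\0\Dcal^*$ and have lengths $1+k(m)\le 1+k(n)$. If $k(m)<k(n)$, then $\rep_u(m)$ is strictly shorter, so $\rep_u(m)<_{rad}\rep_u(n)$ by the length clause. If $k(m)=k(n)=:k$, then $m,n\in\{\0,\dots,|\eta^{k}(u_0)|-1\}$ and both representations are $\0$ followed by a $\tail_{\eta,k,u_0}$ value; since prepending the common letter $\0$ preserves both length and lexicographic order, Lemma~\ref{lem:lexico-order-on-admissible-seq}(ii), applied with power $k$ and letter $u_0$, gives $\tail_{\eta,k,u_0}(m)<_{lex}\tail_{\eta,k,u_0}(n)$, whence $\rep_u(m)<_{rad}\rep_u(n)$. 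In either sub-case the second bullet of Definition~\ref{def:order} yields $\rep_u(m)\prec\rep_u(n)$.

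The case $m<n\le -1$ is the mirror image, and I expect the bookkeeping of the reverse-radix order to be the main obstacle, since longer words are now \emph{smaller}. Here monotonicity gives $k(m)\ge k(n)$, so $\rep_u(m)$ is at least as long as $\rep_u(n)$; if it is strictly longer, the length clause of $<_{rev}$ already gives $\rep_u(m)<_{rev}\rep_u(n)$. When $k(m)=k(n)=:k$, the relevant prefixes of $\eta^{k}(u_{-1})$ have lengths $|\eta^{k}(u_{-1})|+m$ and $|\eta^{k}(u_{-1})|+n$, both lying in $\{0,\dots,|\eta^{k}(u_{-1})|-1\}$ by the defining interval, and $m<n$ makes the former strictly smaller; applying Lemma~\ref{lem:lexico-order-on-admissible-seq}(ii) with power $k$ and letter $u_{-1}$ to these two lengths, and then prepending the common letter $\1$, yields $\rep_u(m)<_{lex}\rep_u(n)$ between words of equal length, which is precisely $\rep_u(m)<_{rev}\rep_u(n)$. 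In both sub-cases the third bullet of Definition~\ref{def:order} gives $\rep_u(m)\prec\rep_u(n)$, which completes the argument.
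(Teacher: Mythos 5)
Your proposal is correct and follows essentially the same route as the paper's proof: the same three-way case split on signs, ruling out the ``wrong'' length inequality via the defining intervals $|\eta^{k-p}(u_0)|\le n<|\eta^{k}(u_0)|$ and $-|\eta^{k}(u_{-1})|\le n<-|\eta^{k-p}(u_{-1})|$, and invoking Lemma~\ref{lem:lexico-order-on-admissible-seq}(ii) for the equal-length comparisons. If anything, your handling of the negative case is slightly more careful than the paper's, since you explicitly shift to the prefix lengths $|\eta^{k}(u_{-1})|+m$ and $|\eta^{k}(u_{-1})|+n$ before applying the lemma.
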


\begin{proof}
    Let $n,n'\in\Z$ be two integers such that $n<n'$.

    Assume that $n<0\leq n'$. 
    Then $\rep_u(n)\in\1\Dcal^*$
    and $\rep_u(n')\in\0\Dcal^*$
    so that
    $\rep_u(n)\prec\rep_u(n')$.

    Assume that $0\leq n<n'$. 
    Then $\rep_u(n)\in\0\Dcal^*$
    and $\rep_u(n')\in\0\Dcal^*$.
    The case $|\rep_u(n)|>|\rep_u(n')|$ is impossible.
    Indeed, suppose that $|\rep_u(n)|=k+1$ and $|\rep_u(n')|=k'+1$
    for some integers $k$ and $k'$.
    If $|\rep_u(n)|>|\rep_u(n')|$, then $k-p\geq k'$,
    where $p$ is the period of $u$.
    From Equation~\eqref{eq:interval-for-n-in-Theorem11}, we have
    \[
        n' < |\eta^{k'}(a)|
        \leq |\eta^{k-p}(a)|
        \leq n,
    \]
    a contradiction.
    If $|\rep_u(n)|<|\rep_u(n')|$,
    then $\rep_u(n)\prec\rep_u(n')$.
    Suppose now that $|\rep_u(n)|=|\rep_u(n')|=k+1$
    for some integer $k$.
    From Lemma~\ref{lem:lexico-order-on-admissible-seq},
    we have 
    \[
    \rep_u(n) =\0\odot\tail_{\eta,k,u_0}(n)
        <_{lex}\0\odot\tail_{\eta,k,u_0}(n')
        =\rep_u(n').
    \]
    Thus $\rep_u(n)\prec\rep_u(n')$.

    Assume that $n<n'<0$. 
    Then $\rep_u(n)\in\1\Dcal^*$
    and $\rep_u(n')\in\1\Dcal^*$.
    The case $|\rep_u(n)|<|\rep_u(n')|$ is impossible.
    Indeed, suppose that $|\rep_u(n)|=k+1$ and $|\rep_u(n')|=k'+1$
    for some integers $k$ and $k'$.
    If $|\rep_u(n)|<|\rep_u(n')|$, then $k'-p\geq k$,
    where $p$ is the period of $u$.
    From Equation~\eqref{eq:interval-for-n-in-Theorem12}, we have
    \[
        n' 
        < -|\eta^{k'-p}(b)|
        \leq -|\eta^{k}(b)|
        \leq n,
    \]
    a contradiction.
    If $|\rep_u(n)|>|\rep_u(n')|$,
    then $\rep_u(n)\prec\rep_u(n')$.
    Suppose that $|\rep_u(n)|=|\rep_u(n')|=k+1$
    for some integer $k$.
    From Lemma~\ref{lem:lexico-order-on-admissible-seq},
    we have 
    \[
    \rep_u(n) =\1\odot\tail_{\eta,k,u_{-1}}(n)
        <_{lex}\1\odot\tail_{\eta,k,u_{-1}}(n')
        =\rep_u(n').
    \]
    Thus $\rep_u(n)\prec\rep_u(n')$.
\end{proof}

It follows from Proposition~\ref{prop:rep-u-is-increasing}
that $\rep_{u}:\Z\to\{\0,\1\}\Dcal^*$ is injective.
Therefore it is a bijection onto its image.
The next result describes the image of the map $\rep_{u}$.

\begin{lemma}\label{lem:repu-image}
    Let $\eta:A^*\to A^*$ be a substitution and
    $u\in\Per(\eta)$ be a two-sided periodic point with growing seed $s = u_{-1}|u_0$.
    Let $p\geq1$ be the period of $u$.
    Then
    \[
        \rep_u(\Z) = \bigcup_{\ell\in\N}
        \Lcal_{\ell p+1}(\Acal_{\eta,s}) \setminus \{\0\Wmin, \1\Wmax\} \Dcal^*.
    \]
\end{lemma}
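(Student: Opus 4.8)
The plan is to prove the set equality by establishing both inclusions, exploiting the structure provided by Proposition~\ref{prop:rep-u-is-increasing} and the earlier characterization results. First I would observe the overall shape: by the remark following Definition~\ref{def:rep-not-prolongable}, every representation $\rep_u(n)$ has length congruent to $1$ modulo $p$, so $\rep_u(\Z)\subseteq\bigcup_{\ell\in\N}\Dcal^{\ell p+1}$ trivially. The content of the lemma is that, within each length class $\ell p+1$, the image consists of exactly those words accepted by $\Acal_{\eta,s}$ that do not start with the ``overflow'' prefixes $\0\Wmin$ or $\1\Wmax$.

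For the inclusion $\rep_u(\Z)\subseteq\bigcup_\ell \Lcal_{\ell p+1}(\Acal_{\eta,s})\setminus\{\0\Wmin,\1\Wmax\}\Dcal^*$, I would argue as follows. That $\rep_u(n)\in\Lcal(\Acal_{\eta,s})$ is essentially a consequence of Theorem~\ref{th:periodic_point_automatic}: feeding $\rep_u(n)$ into the automaton yields $u_n\in A$, so the run terminates in an accepting state, hence $\rep_u(n)$ is accepted. To see that $\rep_u(n)$ avoids the forbidden prefixes, I would use the non-degeneracy conditions built into Theorems~\ref{thm:dumont-thomas-right-p} and~\ref{thm:dumont-thomas-for-left-side-p}. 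For $n\geq1$, condition~(i) of Theorem~\ref{thm:dumont-thomas-right-p} says $m_{k-1}m_{k-2}\cdots m_{k-p}\neq\varepsilon$, which means the first $p$ digits after the leading $\0$ are not all zero; since $\Wmin=\0^p$, this is exactly the statement that $\rep_u(n)\notin\{\0\Wmin\}\Dcal^*$. For $n\leq-2$, I would translate condition~\eqref{eq:left-periodic-condition} of Theorem~\ref{thm:dumont-thomas-for-left-side-p} into the statement that the length-$p$ block $|m_{k-1}|\odot\cdots\odot|m_{k-p}|$ is not equal to $\Wmax$, using the identification of $\tail$ values with digit blocks.

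For the reverse inclusion, I would take a word $w\in\Lcal_{\ell p+1}(\Acal_{\eta,s})$ not beginning with $\0\Wmin$ or $\1\Wmax$ and produce an integer $n$ with $\rep_u(n)=w$. The cleanest route is to invoke the monotonicity and injectivity already established: since $\rep_u$ is increasing for $\prec$ (Proposition~\ref{prop:rep-u-is-increasing}) and has length always $\equiv1\pmod p$, within each fixed length $\ell p+1$ the map $\rep_u$ is an order isomorphism onto its image. I would count: by Lemma~\ref{lem:automaton-language-is-admissible} and the admissibility machinery, the accepted words of length $\ell p+1$ starting with $\0$ correspond bijectively to admissible sequences, and deleting the $\0\Wmin$-prefixed ones leaves precisely the representations of integers $n$ with $|\eta^{(\ell-1)p}(u_0)|\leq n<|\eta^{\ell p}(u_0)|$ (together with the short words for $n\in\{0,-1\}$ when $\ell=0$); symmetrically on the $\1$ side via $\Wmax$. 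Matching these cardinalities length-by-length, combined with injectivity, forces the inclusion.

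The main obstacle I anticipate is the $\Wmax$ side: verifying that the left-infinite condition~\eqref{eq:left-periodic-condition} corresponds exactly to ``the initial length-$p$ block differs from $\Wmax$'' requires carefully unwinding the definition $\Wmax=\tail_{\eta,p,u_{-1}}(|\eta^p(u_{-1})|-1)$ and confirming it is the lexicographically largest admissible block starting from $u_{-1}$, i.e., that $\Wmax$ encodes the ``last'' prefix of $\eta^p(u_{-1})$. This matching of the extremal $\tail$-value with the periodicity obstruction is the delicate step; the $\Wmin=\0^p$ side is comparatively transparent since the all-zero block is manifestly the minimal admissible sequence.
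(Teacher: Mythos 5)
Your proof is correct and its forward inclusion is essentially the paper's: acceptance via Theorem~\ref{th:periodic_point_automatic}, exclusion of $\0\Wmin\Dcal^*$ via the condition $m_{k-1}\cdots m_{k-p}\neq\varepsilon$ of Theorem~\ref{thm:dumont-thomas-right-p}, and exclusion of $\1\Wmax\Dcal^*$ by unwinding $\Wmax=\tail_{\eta,p,u_{-1}}(|\eta^p(u_{-1})|-1)$ against condition~\eqref{eq:left-periodic-condition} --- the delicate identification you flag is exactly the one the paper performs, using the uniqueness of admissible sequences (Lemma~\ref{lem:unique_admissible_sequence}, or part~(i) of Lemma~\ref{lem:lexico-order-on-admissible-seq}) to conclude that the truncated block $(m_i,a_i)_{i=k-p,\dots,k-1}$ would have to be the admissible sequence of the full prefix of $\eta^p(u_{-1})$ of length $|\eta^p(u_{-1})|-1$. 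Where you genuinely diverge is the reverse inclusion. The paper argues constructively: given $\ttd\odot v$ in the right-hand set, it applies Lemma~\ref{lem:automaton-language-is-admissible} to extract an admissible sequence with $|m_i|=v_i$, defines $n=\sum_i|\eta^i(m_i)|$ (shifted by $-|\eta^k(u_{-1})|$ in the negative case), and reads off $\rep_u(n)=\ttd\odot v$ directly from Theorems~\ref{thm:dumont-thomas-right-p} and~\ref{thm:dumont-thomas-for-left-side-p}. You instead count: within each length class the accepted words biject with an integer interval, the $\0\Wmin$- and $\1\Wmax$-prefixed words biject with the shorter length class (using that $\delta(u_0,\Wmin)=u_0$ and $\delta(u_{-1},\Wmax)=u_{-1}$), and equality of finite cardinalities plus injectivity of $\rep_u$ forces surjectivity. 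This is sound, since all the needed bijections are available from Lemmas~\ref{lemma:admissible-feed-into-automaton}, \ref{lem:automaton-language-is-admissible} and \ref{lem:when-m-prefix-of-eta-p-a}, but it is less direct: it proves existence of a preimage without exhibiting it, and it quietly requires the same word-to-integer correspondence that the paper's construction makes explicit, so the bookkeeping is comparable while the payoff is smaller. The paper's route also has the advantage of not needing the forward inclusion as an ingredient of the backward one.
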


\begin{proof}
    $(\subseteq)$.
    It follows from Theorem~\ref{th:periodic_point_automatic} that
    $\rep_u(\Z)\subset\Lcal(\Acal_{\eta,s})$.
    Also for every $n\in\Z$, $\rep_u(n)$ is a word of length $\ell p+1$
    for some $\ell\in\N$.
    Thus $\rep_u(\Z) \subset \bigcup_{\ell\in\N} \Lcal_{\ell p+1}(\Acal_{\eta,s})$.
    It remains to show that
    $\rep_u(\Z)\cap \{\0\Wmin, \1\Wmax\} \Dcal^*=\varnothing$.
    Suppose by contradiction that there exists $n\in\Z$ such that
    $\rep_u(n)\in\0\Wmin\Dcal^*$.
    We have
    $\rep_u(n)=\0\odot |m_{k-1}|\odot |m_{k-2}|\odot\ldots\odot|m_0|$,
    where $k=\ell p$.
    Then 
    $|m_{k-1}|\odot\ldots\odot|m_{k-p}| = \0^p$, which implies
    $m_{k-1} m_{k-2}\ldots m_{k-p} = \varepsilon$, thus contradicting
    Theorem~\ref{thm:dumont-thomas-right-p}.
    On the other hand, suppose by contradiction that there exists $n\in\Z$ such that
    $\rep_u(n)\in\1\Wmax\Dcal^*$.
    We have
    $\rep_u(n)=\1\odot |m_{k-1}|\odot |m_{k-2}|\odot\ldots\odot|m_0|$.
    Then 
    $|m_{k-1}|\odot\ldots\odot|m_{k-p}| =  
    \tail_{\eta,p,u_{-1}}(|\eta^p(u_{-1})|-1)$,
    which implies
    $\eta^{p-1}(m_{k-1})\eta^{p-2}(m_{k-2})\cdots\eta^0(m_{k-p})$
    is the prefix of $\eta^p(u_{-1})$ of length $|\eta^p(u_{-1})|-1$,
    a contradiction with 
    Theorem~\ref{thm:dumont-thomas-for-left-side-p}.

    $(\supseteq)$.
    Let $\ell\in\N$ and $k=\ell p$.
    Let $v= v_{k-1}\cdots v_0$
    such that $\ttd \odot v
    \in\Lcal_{\ell p+1}(\Acal_{\eta,s}) \setminus \{\0\Wmin, \1\Wmax\} \Dcal^*$.

    Suppose that $\ttd=\0$.
    We have $v \in\Lcal(\Acal_{\eta,u_0})$.
    From Lemma~\ref{lem:automaton-language-is-admissible},
    there exists a $u_0$-admissible sequence
    $(m_i,a_i)_{i=0,\dots,k-1}$ such that
    $|m_i|=v_i$ for every $i=0,\dots,k-1$.
    Let $n=\sum_{i=0}^{k-1}|\eta^i(m_i)|$.
    From Theorem~\ref{thm:dumont-thomas-right-p}
    and using $v\notin \Wmin\Dcal^*$, 
    we have $\rep_u(n)=\0\odot v$.
    Thus $\ttd\odot v\in\rep_u(\Z)$.

    Suppose that $\ttd=\1$.
    We have $v\in\Lcal(\Acal_{\eta,u_{-1}})$.
    From Lemma~\ref{lem:automaton-language-is-admissible},
    there exists a $u_{-1}$-admissible sequence
    $(m_i,a_i)_{i=0,\dots,k-1}$ such that
    $|m_i|=v_i$ for every $i=0,\dots,k-1$.
    Let $n=-|\eta^{k}(u_{-1})|+\sum_{i=0}^{k-1}|\eta^i(m_i)|$.
    From Theorem~\ref{thm:dumont-thomas-for-left-side-p},
    and using $v\notin \Wmax\Dcal^*$, 
    we have $\rep_u(n)=\1\odot v$.
    Thus $\ttd\odot v\in\rep_u(\Z)$.
\end{proof}

Results similar to Proposition~\ref{prop:rep-u-is-increasing} exist for other
numeration systems; see \cite[\S 5]{MR1879663} and \cite[\S 4]{MR2290784}.
In some other works on numeration systems, such an increasing bijection 
is not a~consequence but rather a hypothesis.
For example, a bijection $\N\to\Lcal$ serves as the definition of abstract
numeration systems in \cite{MR1799066}.
Similarly, we have the following characterization of Dumont--Thomas complement numeration systems
for $\Z$ in terms of the total order $\prec$ on the language recognized by an automaton.

\begin{theorem}\label{thm:dumont-thomas-characterization-increasing}
    Let $\eta:A^*\to A^*$ be a substitution and
    $u\in\Per(\eta)$ be a two-sided periodic point with growing seed $s = u_{-1}|u_0$.
    Let $p\geq1$ be the period of $u$.
    Let $f:\Z\to\{\0,\1\}\Dcal^*$ be some map.
    The following items are equivalent:
    \begin{itemize}
    \item $f=\rep_u$,
    \item $f$ is increasing with respect to $\prec$, its image is $f(\Z)=
        \bigcup_{\ell\in\N}\Lcal_{\ell p+1}(\Acal_{\eta,s}) \setminus \{\0\Wmin, \1\Wmax\} \Dcal^*$
            and $f(0)=\0$.
    \end{itemize}
\end{theorem}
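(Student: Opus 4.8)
The plan is to prove the two implications separately, leaning heavily on results already established.

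The forward direction ($f=\rep_u \Rightarrow$ the three properties) is essentially free. That $\rep_u$ is increasing with respect to $\prec$ is exactly Proposition~\ref{prop:rep-u-is-increasing}. That its image equals the stated union-minus-exceptions is exactly Lemma~\ref{lem:repu-image}. And $\rep_u(0)=\0$ holds by the very definition of $\rep_u$ in Definition~\ref{def:rep-not-prolongable}. So this direction requires only citation.

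The substance is the converse: I would assume $f$ is increasing for $\prec$, has the prescribed image, and satisfies $f(0)=\0$, and conclude $f=\rep_u$. The strategy is a standard uniqueness-of-order-isomorphism argument. Both $f$ and $\rep_u$ are strictly increasing maps from the totally ordered set $(\Z,<)$ \emph{onto} the same set $S:=\bigcup_{\ell\in\N}\Lcal_{\ell p+1}(\Acal_{\eta,s})\setminus\{\0\Wmin,\1\Wmax\}\Dcal^*$, the latter by Proposition~\ref{prop:rep-u-is-increasing} together with Lemma~\ref{lem:repu-image}. A strictly increasing bijection between two totally ordered sets is unique once it exists, because any two order-isomorphisms $g,h:\Z\to(S,\prec)$ must coincide. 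The cleanest way to see this is: $(S,\prec)$ is order-isomorphic to $(\Z,<)$, and an order-isomorphism of $\Z$ with itself that is increasing is forced once a single value is pinned down, since $\Z$ has the property that every element has an immediate successor and immediate predecessor and the order is a single $\Z$-chain. Thus I would argue that both $f$ and $\rep_u$ send $n$ to the element of $S$ that is $n$-th in the $\prec$-order, counting from the distinguished element corresponding to $0$. The condition $f(0)=\0=\rep_u(0)$ fixes that common origin, and strict monotonicity then propagates the equality to all $n\in\Z$ by induction in both directions: $f(n+1)$ must be the $\prec$-successor of $f(n)$ within $S$ (since $f$ is an increasing bijection onto $S$, it cannot skip any element), and likewise for $\rep_u$, so $f(n)=\rep_u(n)$ for all $n$ follows by induction from the base case $n=0$.

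The point that requires care — and which I expect to be the main obstacle — is justifying that an increasing surjection onto $(S,\prec)$ must send consecutive integers to $\prec$-consecutive elements of $S$; equivalently, that $(S,\prec)$ has the order type of $\Z$ with no gaps that $f$ could jump over. This rests on $f$ being a \emph{bijection} onto $S$ (injectivity from strict monotonicity, surjectivity from the image hypothesis), so that the $n$-th element in the $\prec$-enumeration of $S$ is well defined and hit exactly once; I would spell out that if $f(n)$ and $f(n+1)$ had some element of $S$ strictly $\prec$-between them, that element would be $f(m)$ for some $m$ with $n<m<n+1$, which is impossible. I would also remark that $S$ is indeed $\prec$-order-isomorphic to $\Z$ (it has neither a $\prec$-minimum nor a $\prec$-maximum, consistent with the discussion of radix and reverse-radix orders preceding Definition~\ref{def:order}), so the enumeration is two-sided and the induction genuinely runs in both directions from $0$. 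With the base case $f(0)=\0=\rep_u(0)$ and these successor/predecessor steps, the equality $f=\rep_u$ follows.
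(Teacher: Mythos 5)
Your proposal is correct and follows essentially the same route as the paper: the forward direction by citing Proposition~\ref{prop:rep-u-is-increasing}, Lemma~\ref{lem:repu-image} and Definition~\ref{def:rep-not-prolongable}, and the converse by the uniqueness of an increasing bijection from $\Z$ onto the common image that sends $0$ to $\0$. The only difference is that the paper simply asserts this uniqueness in one sentence, whereas you spell out the successor/predecessor induction that justifies it — a welcome but inessential elaboration.
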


\begin{proof}
    Suppose that $f=\rep_u$.
    Then $f$ is increasing from Proposition~\ref{prop:rep-u-is-increasing}.
    Its image was computed in Lemma~\ref{lem:repu-image}, 
    Also, $f(0)=\0$ from Definition~\ref{def:rep-not-prolongable}.

    Let $f:\Z\to\{\0,\1\}\Dcal^*$.
    Suppose $f$ is increasing, its image is $f(\Z)=
        \bigcup_{\ell\in\N}\Lcal_{\ell p+1}(\Acal_{\eta,s}) \setminus \{\0\Wmin, \1\Wmax\} \Dcal^*$
            and $f(0)=\0$.
    The map $\rep_u$ satisfies the same properties.
    Since there is a unique increasing bijection $\Z\to f(\Z)$ such that $f(0)=\0$,
    we conclude that $f=\rep_u$.
\end{proof}

\section{Relation with existing complement numeration systems}
\label{sec:known-complement-NS}

In this section, we show that two existing complement numeration systems can be recovered
as a Dumont--Thomas complement numeration system using the some well-chosen substitutions.
The involved substitutions are part of the examples presented in Section~\ref{sec:examples}.

\subsection{Two's complement numeration system}

Let $\Dcal = \{\0,\1\}$.
In the two's complement representation of integers
the value of a binary word $w=w_{k-1} w_{k-2}\cdots w_0\in\Dcal^{k}$
is $\valTwoC(w) = \sum_{i=0}^{k-1}w_i 2^i - w_{k-1}2^{k}$; see \cite[\S 4.1]{MR3077153}.
For every $n\in\Z$ there exists a unique word 
$w\in\Dcal^+\setminus \left( \0\0\Dcal^* \cup \1\1\Dcal^* \right)$
such that $n=\valTwoC(w)$. 
The word $w$ is called the \emph{two's complement representation} of the integer $n$,
and we denote it by $\rep_{2c}(n)$.
    Observe that the map $\rep_{2c}:\Z\to \Dcal^+\setminus \left( \0\0\Dcal^* \cup
    \1\1\Dcal^* \right)$ is an increasing bijection with respect to the
    order~$\prec$:
\begin{center}
\begin{tikzcd}[column sep=tiny]
     \cdots \arrow[r, phantom, "<"]
    &-5 \arrow[d, mapsto] \arrow[r, phantom, "<"]
    &-4 \arrow[d, mapsto] \arrow[r, phantom, "<"]
    &-3 \arrow[d, mapsto] \arrow[r, phantom, "<"]
    &-2 \arrow[d, mapsto] \arrow[r, phantom, "<"]
    &-1 \arrow[d, mapsto] \arrow[r, phantom, "<"]
    & 0 \arrow[d, mapsto] \arrow[r, phantom, "<"]
    & 1 \arrow[d, mapsto] \arrow[r, phantom, "<"]
    & 2 \arrow[d, mapsto] \arrow[r, phantom, "<"]
    & 3 \arrow[d, mapsto] \arrow[r, phantom, "<"]
    & 4 \arrow[d, mapsto] \arrow[r, phantom, "<"]
    & \cdots \\
      \cdots   \arrow[r, phantom, "\prec"]
    & \1\0\1\1 \arrow[r, phantom, "\prec"]
    & \1\0\0   \arrow[r, phantom, "\prec"]
    & \1\0\1   \arrow[r, phantom, "\prec"]
    & \1\0     \arrow[r, phantom, "\prec"]
    & \1       \arrow[r, phantom, "\prec"]
    & \0       \arrow[r, phantom, "\prec"]
    & \0\1     \arrow[r, phantom, "\prec"]
    & \0\1\0   \arrow[r, phantom, "\prec"]
    & \0\1\1   \arrow[r, phantom, "\prec"]
    & \0\1\0\0 \arrow[r, phantom, "\prec"]
    & \cdots
\end{tikzcd}
\end{center}

We now show that the two's complement numeration system coincides with
the Dumont--Thomas complement numeration system associated with a two-sided fixed
point of 2-uniform substitution. 

\begin{proposition}\label{prop:rep_equality_2c}
    Let $\psi:A\to A^*$ be some 2-uniform substitution
    and let $\beta\in\Per(\psi)$ be some two-sided periodic point of period 1.
    Then $\rep_{\beta}$ is the two's complement numeration system, that is,
    $\rep_{\beta}=\rep_{2c}$.
\end{proposition}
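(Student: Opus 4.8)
The plan is to invoke the characterization of $\rep_\beta$ provided by Theorem~\ref{thm:dumont-thomas-characterization-increasing} rather than to compare the two maps value by value. Since $\psi$ is 2-uniform we have $\Dcal=\{\0,\1\}$, and since $\beta$ has period $1$ we take $p=1$. The theorem then says that $\rep_\beta$ is the unique map $f\colon\Z\to\{\0,\1\}\Dcal^*$ that is increasing for $\prec$, satisfies $f(0)=\0$, and has image $\bigcup_{\ell\in\N}\Lcal_{\ell+1}(\Acal_{\psi,s})\setminus\{\0\Wmin,\1\Wmax\}\Dcal^*$. It therefore suffices to check that $\rep_{2c}$ enjoys these three properties.

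First I would pin down the recognized language. Because $\psi$ is 2-uniform, every letter $c\in A$ has $|\psi(c)|=2$, so the transition function is total on $A\times\Dcal$: both $\delta(c,\0)$ and $\delta(c,\1)$ are defined and lie in $A$. Combined with the start transitions $\delta(\texttt{start},\0)=u_0$ and $\delta(\texttt{start},\1)=u_{-1}$, reading any nonempty word over $\{\0,\1\}$ from \texttt{start} ends in an accepting state, whence $\Lcal(\Acal_{\psi,s})=\Dcal^+$ and $\bigcup_{\ell\in\N}\Lcal_{\ell+1}(\Acal_{\psi,s})=\Dcal^+$.

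Next I would compute the neutral words. Since $p=1$, the map $\tail_{\psi,1,x}$ sends a length $n$ to the single digit $n$, so $\Wmin=\tail_{\psi,1,u_0}(0)=\0$ and $\Wmax=\tail_{\psi,1,u_{-1}}(|\psi(u_{-1})|-1)=\tail_{\psi,1,u_{-1}}(1)=\1$. Hence the excluded set is $\{\0\Wmin,\1\Wmax\}\Dcal^*=\0\0\Dcal^*\cup\1\1\Dcal^*$, and the image prescribed by the theorem is exactly $\Dcal^+\setminus(\0\0\Dcal^*\cup\1\1\Dcal^*)$, which is precisely the image of $\rep_{2c}$ recorded just before the statement.

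It then remains to use the already-noted facts that $\rep_{2c}$ is an increasing bijection onto $\Dcal^+\setminus(\0\0\Dcal^*\cup\1\1\Dcal^*)$ for $\prec$, and that $\rep_{2c}(0)=\0$ because $\valTwoC(\0)=0$. All three hypotheses of Theorem~\ref{thm:dumont-thomas-characterization-increasing} then hold for $\rep_{2c}$, and its uniqueness clause gives $\rep_{2c}=\rep_\beta$. The only step that is more than bookkeeping is the identification of the image: one must verify both that the automaton accepts all of $\Dcal^+$ (this is where 2-uniformity, and nothing weaker, is used) and that the abstract forbidden set $\{\0\Wmin,\1\Wmax\}\Dcal^*$ matches the concrete forbidden patterns $\0\0\Dcal^*\cup\1\1\Dcal^*$ of the two's complement system; I expect this reconciliation of the two descriptions of the language to be the crux of the argument.
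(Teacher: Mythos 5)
Your proposal is correct and follows essentially the same route as the paper: both proofs verify that $\rep_{2c}$ satisfies the three conditions of Theorem~\ref{thm:dumont-thomas-characterization-increasing} (increasing for $\prec$, value $\0$ at $0$, and image $\Dcal^+\setminus(\0\0\Dcal^*\cup\1\1\Dcal^*)$, obtained from Lemma~\ref{lem:repu-image} with $p=1$, $\Wmin=\0$, $\Wmax=\1$) and invoke the uniqueness clause. Your extra justification of why $2$-uniformity makes the automaton accept all of $\Dcal^+$ is a detail the paper states without proof, but the argument is the same.
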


\begin{proof}
    From Proposition~\ref{prop:rep-u-is-increasing},
    $\rep_{\beta}:\Z\to\{\0,\1\}\Dcal^*$
    is an increasing map with respect to the order $\prec$.
    Thus, it is an increasing bijection $\Z\to\rep_\beta(\Z)$.
    From Lemma~\ref{lem:repu-image}, we have 
    \begin{equation*}
    \begin{aligned}
        \rep_\beta(\Z) &= \bigcup_{\ell\in\N}
        \Lcal_{\ell p+1}(\Acal_{\psi,s}) \setminus \{\0\Wmin, \1\Wmax\} \Dcal^*
        = \Lcal_{\geq 1}(\Acal_{\psi,s}) \setminus \{\0\0,\1\1\}\Dcal^*\\
        &=\Dcal^+\setminus \left( \0\0\Dcal^* \cup \1\1\Dcal^* \right),
    \end{aligned}
    \end{equation*}
    since $p = 1$, $\Lcal(\Acal_{\psi,s}) = \Dcal^*$, $\Wmin = \0$ and $\Wmax = \1$.
    Also, $\rep_{\beta}(0)=\0$.
    On the other hand, the map $\rep_{2c}:\Z\to \Dcal^+\setminus \left( \0\0\Dcal^* \cup
    \1\1\Dcal^* \right)$ is an increasing bijection with respect to the
    order~$\prec$ and $\rep_{2c}(0)=\0$.
    From Theorem~\ref{thm:dumont-thomas-characterization-increasing},
    we conclude $\rep_{\beta}=\rep_{2c}$.
\end{proof}

Note that the Thue-Morse substitution has no two-sided fixed point, so the above
result does not hold for numeration systems based on fixed points of the
Thue-Morse substitution; see Table~\ref{tab:num_systems}.

\subsection{Fibonacci analogue of the two's complement numeration system}

In what follows, the Fibonacci sequence
$(F_n)_{n\geq 0}$,
$F_{n} = F_{n-1} + F_{n-2}$, for all $n \geq 2$,
is defined with the initial conditions
$ F_0 = 1$, $F_1 = 2$.
We denote $\Dcal = \{\0,\1\}$.

In \cite{zbMATH07799058}, a Fibonacci analogue of the two's complement numeration
system for nonnegative and negative integers was defined
from the value map $\valFc:\Dcal^*\to\Z$ 
by $\valFc(w) = \sum_{i=0}^{k-1}w_i F_{i} - w_{k-1}F_{k}$
for every binary word $w=w_{k-1}\cdots w_0\in\Dcal^{k}$.
It is an analog of the two's complement value map $\valTwoC$, 
using Fibonacci numbers instead of powers of 2.
It was proved in \cite{zbMATH07799058} that
for every $n\in\Z$ there exists a
unique odd-length word 
$w\in L=\Dcal(\Dcal\Dcal)^*\setminus
    \left( \Dcal^*11\Dcal^* \cup 000\Dcal^* \cup 101\Dcal^*\right)$ 
such that $n=\valFc(w)$.
It defines the map
$\repFc:\Z\to L$ by the rule $n\mapsto w$.


We show that the Dumont--Thomas complement numeration system obtained from the two-sided Fibonacci
word is the Fibonacci analogue of the two's complement numeration system
introduced in~\cite{zbMATH07799058}.

\begin{proposition}\label{prop:rep_equality}
	Let $\varphi:a\mapsto ab,b\mapsto a$ be the Fibonacci substitution
    and let $\gamma\in\Per(\varphi)$ be the periodic point of period 2 with
    seed $s = b|a$.
    Then $\rep_{\gamma}$ is the Fibonacci analogue of the two's complement
    numeration system, that is, $\rep_{\gamma}=\repFc$.
\end{proposition}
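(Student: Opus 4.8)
The plan is to mirror exactly the argument used for the two's complement case in Proposition~\ref{prop:rep_equality_2c}, invoking the characterization theorem (Theorem~\ref{thm:dumont-thomas-characterization-increasing}) rather than trying to match the two value maps $\valFc$ and the Dumont--Thomas recurrence directly. The strategy rests on the observation that both $\rep_\gamma$ and $\repFc$ are increasing bijections from $\Z$ onto the same regular language sending $0\mapsto\0$, so by the uniqueness clause of the characterization they must coincide. First I would record that $\gamma$ has period $p=2$ and growing seed $s=b|a$, and that $\Dcal=\{\0,\1\}$, so every representation $\rep_\gamma(n)$ has odd length $\ell p+1 = 2\ell+1$.

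The core computation is to identify the image $\rep_\gamma(\Z)$ given by Lemma~\ref{lem:repu-image} with the language $L=\Dcal(\Dcal\Dcal)^*\setminus(\Dcal^*\1\1\Dcal^*\cup\0\0\0\Dcal^*\cup\1\0\1\Dcal^*)$ from \cite{zbMATH07799058}. To do this I would first compute the neutral words: $\Wmin=\tail_{\varphi,2,a}(0)=\0\0$, and $\Wmax=\tail_{\varphi,2,b}(|\varphi^2(b)|-1)=\tail_{\varphi,2,b}(1)$. Since $\varphi^2:a\mapsto aba,\ b\mapsto ab$, we have $|\varphi^2(b)|=2$, so $\Wmax$ is the tail of the length-$1$ prefix of $\varphi^2(b)=ab$, giving $\Wmax=\0\1$. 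Next I would describe the language $\Lcal(\Acal_{\varphi,s})$ accepted by the automaton $\Acal_{\varphi,s}$ (the rightmost automaton in Figure~\ref{fig:automataZ}): a word $\ttd\odot v$ with $\ttd\in\{\0,\1\}$ is accepted iff $v$ contains no factor $\1\1$, because the only forbidden transition is reading $\1$ from state $b$. Intersecting the odd-length words $\bigcup_\ell\Lcal_{2\ell+1}(\Acal_{\varphi,s})$ with the complement of $\{\0\Wmin,\1\Wmax\}\Dcal^*=\{\0\0\0,\1\0\1\}\Dcal^*$ should reproduce exactly the constraints defining $L$: no $\1\1$ factor anywhere, odd length, and no prefix $\0\0\0$ or $\1\0\1$.

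The main obstacle, and the step requiring genuine care, is establishing this language equality $\rep_\gamma(\Z)=L$ cleanly, since it demands matching three separate forbidden-pattern conditions against the automaton-plus-neutral-word description. In particular I must verify that the ``no $\1\1$'' condition coming from the Fibonacci (Zeckendorf-type) admissibility of the automaton agrees with the factor $\Dcal^*\1\1\Dcal^*$ excluded in $L$, and that the two prefix exclusions $\0\0\0$ and $\1\0\1$ correspond precisely to $\0\Wmin$ and $\1\Wmax$ respectively. Once the language identity is settled, I would invoke the result from \cite{zbMATH07799058} that $\repFc:\Z\to L$ is an increasing bijection for $\prec$ with $\repFc(0)=\0$ (the diagram analogous to the two's complement one), note that $\rep_\gamma$ enjoys the identical three properties by Proposition~\ref{prop:rep-u-is-increasing}, Lemma~\ref{lem:repu-image}, and Definition~\ref{def:rep-not-prolongable}, and conclude $\rep_\gamma=\repFc$ by the uniqueness of such an increasing bijection asserted in Theorem~\ref{thm:dumont-thomas-characterization-increasing}.
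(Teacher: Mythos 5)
Your proposal follows the paper's proof essentially verbatim: compute $\Wmin=\0\0$, $\Wmax=\0\1$ and $\Lcal(\Acal_{\varphi,s})=\Dcal^*\setminus\Dcal^*\1\1\Dcal^*$, identify $\rep_\gamma(\Z)$ with $L$ via Lemma~\ref{lem:repu-image}, and conclude from the increasing-bijection characterization of Theorem~\ref{thm:dumont-thomas-characterization-increasing}. One small imprecision: acceptance by $\Acal_{\varphi,s}$ requires the \emph{whole} word $\ttd\odot v$ to avoid the factor $\1\1$ (since reading $\1$ then $\1$ from the start state also fails), not merely $v$; your later summary states the correct condition, so this does not affect the argument.
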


\begin{proof}
    From Proposition~\ref{prop:rep-u-is-increasing},
    $\rep_{\gamma}:\Z\to\{\0,\1\}\Dcal^*$
    is an increasing map with respect to the order $\prec$.
    Thus, it is an increasing bijection $\Z\to\rep_\gamma(\Z)$.
    Denote $L = \Dcal(\Dcal\Dcal)^*\setminus
	\left( \Dcal^*\1\1\Dcal^* \cup\0\0\0\Dcal^* \cup \1\0\1\Dcal^*\right)$.
    From Lemma~\ref{lem:repu-image}, we have 
    \begin{equation*}
    \begin{aligned}
        \rep_\gamma(\Z) &= \bigcup_{\ell\in\N}
        \Lcal_{\ell p+1}(\Acal_{\varphi,s}) \setminus \{\0\Wmin, \1\Wmax\} \Dcal^*\\
        &= (\Dcal(\Dcal\Dcal)^* \setminus \Dcal^*\1\1\Dcal^*) 
        \setminus \{\0\0\0, \1\0\1\} \Dcal^* = L,\\
    \end{aligned}
    \end{equation*}
    since $p = 2$,
    $\Lcal(\Acal_{\varphi,s}) = \Dcal^*\setminus \Dcal^*\1\1\Dcal^*$, 
    $\Wmin = \0\0$ and $\Wmax = \0\1$.
    From \cite{zbMATH07799058}, the map $\repFc$ is an increasing bijection $\Z \to
    L$ with respect to the order $\prec$.
    Moreover,
    $\rep_{\gamma}(0) = \0 = \rep_{\Fcal c}(0)$.
    From Theorem~\ref{thm:dumont-thomas-characterization-increasing},
    $\repFc = \rep_{\gamma}$.
\end{proof}

We leave open the following question.

\begin{question}
    If $\rep_{\gamma}$ is the Fibonacci analogue of the two's complement
    numeration system, then what is the meaning of $\rep_{\delta}$?
    Can we define it from some value map?
    Recall that $\delta\in\Per(\varphi)$ is the periodic point of period 2
    of the Fibonacci substitution $\varphi$ with seed $a|a$, see
    Table~\ref{tab:num_systems}.
\end{question}

\subsection*{Acknowledgments}
The authors would like to thank the reviewer for their valuable comments
leading in particular to a improved buildup in the introduction and clearer
notation for the concatenation of integers (not to be misinterpreted as
multiplication).
This work was partially funded in France by
ANR CODYS (ANR-18-CE40-0007) and ANR IZES (ANR-22-CE40-0011).
The second author acknowledges financial support by the Barrande fellowship program
and the Grant Agency of Czech Technical University in Prague
(SGS20/183/OHK4/3T/14).

\bibliographystyle{myalpha}

\bibliography{biblio}

\begin{thebibliography}{CLGR11}

\bibitem[AS03]{MR1997038}
J.-P. Allouche and J. Shallit.
\newblock {\em Automatic sequences}.
\newblock Cambridge University Press, Cambridge, 2003.
\newblock Theory, applications, generalizations.
\newblock
  \href{https://doi.org/10.1017/CBO9780511546563}{\texttt{doi:10.1017/CBO9780511546563}}.

\bibitem[BG13]{MR3136260}
M. Baake and U. Grimm.
\newblock {\em Aperiodic {O}rder. {V}ol. 1}, volume 149 of {\em Encyclopedia of
  Mathematics and its Applications}.
\newblock Cambridge University Press, Cambridge, 2013.
\newblock
  \href{https://doi.org/10.1017/CBO9781139025256}{\texttt{doi:10.1017/CBO9781139025256}}.

\bibitem[BH97]{MR1463527}
V. Bruy\`ere and G. Hansel.
\newblock Bertrand numeration systems and recognizability.
\newblock {\em Theoret. Comput. Sci.}, 181(1):17--43, 1997.
\newblock
  \href{https://doi.org/10.1016/S0304-3975(96)00260-5}{\texttt{doi:10.1016/S0304-3975(96)00260-5}}.

\bibitem[BR07]{MR2282862}
V. Berth\'{e} and M. Rigo.
\newblock Odometers on regular languages.
\newblock {\em Theory Comput. Syst.}, 40(1):1--31, 2007.
\newblock
  \href{https://doi.org/10.1007/s00224-005-1215-5}{\texttt{doi:10.1007/s00224-005-1215-5}}.

\bibitem[BR10]{MR2742574}
V. Berth\'e and M. Rigo, editors.
\newblock {\em Combinatorics, automata and number theory}, volume 135 of {\em
  Encyclopedia of Mathematics and its Applications}.
\newblock Cambridge University Press, Cambridge, 2010.
\newblock
  \href{https://doi.org/10.1017/CBO9780511777653}{\texttt{doi:10.1017/CBO9780511777653}}.

\bibitem[CLGR11]{MR2799274}
E. Charlier, M. Le~Gonidec, and M. Rigo.
\newblock Representing real numbers in a generalized numeration system.
\newblock {\em J. Comput. System Sci.}, 77(4):743--759, 2011.
\newblock
  \href{https://doi.org/10.1016/j.jcss.2010.07.002}{\texttt{doi:10.1016/j.jcss.2010.07.002}}.

\bibitem[Cob72]{MR457011}
A. Cobham.
\newblock Uniform tag sequences.
\newblock {\em Math. Systems Theory}, 6:164--192, 1972.
\newblock
  \href{https://doi.org/10.1007/BF01706087}{\texttt{doi:10.1007/BF01706087}}.

\bibitem[CS01a]{MR1879663}
V. Canterini and A. Siegel.
\newblock Automate des pr\'{e}fixes-suffixes associ\'{e} \`a une substitution
  primitive.
\newblock {\em J. Th\'{e}or. Nombres Bordeaux}, 13(2):353--369, 2001.

\bibitem[CS01b]{MR1852097}
V. Canterini and A. Siegel.
\newblock Geometric representation of substitutions of {P}isot type.
\newblock {\em Trans. Amer. Math. Soc.}, 353(12):5121--5144, 2001.
\newblock
  \href{https://doi.org/10.1090/S0002-9947-01-02797-0}{\texttt{doi:10.1090/S0002-9947-01-02797-0}}.

\bibitem[DT89]{MR1020484}
J.-M. Dumont and A. Thomas.
\newblock Systemes de numeration et fonctions fractales relatifs aux
  substitutions.
\newblock {\em Theoret. Comput. Sci.}, 65(2):153--169, 1989.
\newblock
  \href{https://doi.org/10.1016/0304-3975(89)90041-8}{\texttt{doi:10.1016/0304-3975(89)90041-8}}.

\bibitem[Fer97]{zbMATH01019492}
S. Ferenczi.
\newblock Systems of finite rank.
\newblock {\em Colloq. Math.}, 73(1):35--65, 1997.
\newblock
  \href{https://doi.org/10.4064/cm-73-1-35-65}{\texttt{doi:10.4064/cm-73-1-35-65}}.

\bibitem[Fog02]{MR1970385}
N.~P. Fogg.
\newblock {\em Substitutions in Dynamics, Arithmetics and Combinatorics},
  volume 1794 of {\em Lecture Notes in Mathematics}.
\newblock Springer-Verlag, Berlin, 2002.
\newblock Edited by V. Berth\'{e}, S. Ferenczi, C. Mauduit and A. Siegel.
\newblock \href{https://doi.org/10.1007/b13861}{\texttt{doi:10.1007/b13861}}.

\bibitem[Fra89]{MR992303}
A.~S. Fraenkel.
\newblock The use and usefulness of numeration systems.
\newblock {\em Inform. and Comput.}, 81(1):46--61, 1989.
\newblock
  \href{https://doi.org/10.1016/0890-5401(89)90028-X}{\texttt{doi:10.1016/0890-5401(89)90028-X}}.

\bibitem[Fro88]{MR942576}
C. Frougny.
\newblock Linear numeration systems of order two.
\newblock {\em Inform. and Comput.}, 77(3):233--259, 1988.
\newblock
  \href{https://doi.org/10.1016/0890-5401(88)90050-8}{\texttt{doi:10.1016/0890-5401(88)90050-8}}.

\bibitem[FS96]{MR1411227}
C. Frougny and B. Solomyak.
\newblock On representation of integers in linear numeration systems.
\newblock In {\em Ergodic theory of {${\bf Z}^d$} actions ({W}arwick,
  1993--1994)}, volume 228 of {\em London Math. Soc. Lecture Note Ser.}, pages
  345--368. Cambridge Univ. Press, Cambridge, 1996.
\newblock
  \href{https://doi.org/10.1017/CBO9780511662812.014}{\texttt{doi:10.1017/CBO9780511662812.014}}.

\bibitem[FT06]{MR2290784}
C. Fuchs and R. Tijdeman.
\newblock Substitutions, abstract number systems and the space filling
  property.
\newblock {\em Ann. Inst. Fourier (Grenoble)}, 56(7):2345--2389, 2006.
\newblock Num\'{e}ration, pavages, substitutions.

\bibitem[GLT95]{MR1322556}
P.~J. Grabner, P. Liardet, and R.~F. Tichy.
\newblock Odometers and systems of numeration.
\newblock {\em Acta Arith.}, 70(2):103--123, 1995.
\newblock
  \href{https://doi.org/10.4064/aa-70-2-103-123}{\texttt{doi:10.4064/aa-70-2-103-123}}.

\bibitem[Gou20]{zbMATH07214196}
F.~Q. Gouv{\^e}a.
\newblock {\em $p$-adic numbers. {An} introduction}.
\newblock Universitext. Cham: Springer, 3rd corrected and expanded edition
  edition, 2020.
\newblock
  \href{https://doi.org/10.1007/978-3-030-47295-5}{\texttt{doi:10.1007/978-3-030-47295-5}}.

\bibitem[Hol98]{MR1491655}
M. Hollander.
\newblock Greedy numeration systems and regularity.
\newblock {\em Theory Comput. Syst.}, 31(2):111--133, 1998.
\newblock
  \href{https://doi.org/10.1007/s002240000082}{\texttt{doi:10.1007/s002240000082}}.

\bibitem[HPS92]{MR1194074}
R.~H. Herman, I.~F. Putnam, and C.~F. Skau.
\newblock Ordered {B}ratteli diagrams, dimension groups and topological
  dynamics.
\newblock {\em Internat. J. Math.}, 3(6):827--864, 1992.
\newblock
  \href{https://doi.org/10.1142/S0129167X92000382}{\texttt{doi:10.1142/S0129167X92000382}}.

\bibitem[Int23]{intel-dev-manual-2023}
Intel.
\newblock {\em Intel 64 and IA-32 Architectures Software Developer's Manual:
  Basic Architecture}, volume~1.
\newblock 2023.

\bibitem[Knu98]{MR3077153}
D.~E. Knuth.
\newblock {\em The art of computer programming. {V}ol. 2}.
\newblock Addison-Wesley, Reading, MA, 1998.
\newblock Seminumerical algorithms, Third edition.

\bibitem[Lab19]{MR3978536}
S. Labb\'{e}.
\newblock A self-similar aperiodic set of 19 {W}ang tiles.
\newblock {\em Geom. Dedicata}, 201:81--109, 2019.
\newblock
  \href{https://doi.org/10.1007/s10711-018-0384-8}{\texttt{doi:10.1007/s10711-018-0384-8}}.

\bibitem[LL21]{MR4364231}
S. Labb\'{e} and J. Lep\v{s}ov\'{a}.
\newblock A numeration system for {F}ibonacci-like {W}ang shifts.
\newblock In {\em Combinatorics on words}, volume 12847 of {\em Lecture Notes
  in Comput. Sci.}, pages 104--116. Springer, Cham, 2021.
\newblock
  \href{https://doi.org/10.1007/978-3-030-85088-3_9}{\texttt{doi:10.1007/978-3-030-85088-3\_9}}.

\bibitem[LL23]{zbMATH07799058}
S. Labb{\'e} and J. Lep{\v{s}}ov{\'a}.
\newblock A {Fibonacci} analogue of the two's complement numeration system.
\newblock {\em RAIRO, Theor. Inform. Appl.}, 57:23, 2023.
\newblock
  \href{https://doi.org/10.1051/ita/2023007}{\texttt{doi:10.1051/ita/2023007}}.
\newblock Id/No 12.

\bibitem[Lot02]{MR1905123}
M. Lothaire.
\newblock {\em Algebraic Combinatorics on Words}, volume~90 of {\em
  Encyclopedia of Mathematics and its Applications}.
\newblock Cambridge University Press, Cambridge, 2002.

\bibitem[LR01]{MR1799066}
P.~B.~A. Lecomte and M. Rigo.
\newblock Numeration systems on a regular language.
\newblock {\em Theory Comput. Syst.}, 34(1):27--44, 2001.
\newblock
  \href{https://doi.org/10.1007/s002240010014}{\texttt{doi:10.1007/s002240010014}}.

\bibitem[LR02]{zbMATH01748010}
P. Lecomte and M. Rigo.
\newblock On the representation of real numbers using regular languages.
\newblock {\em Theory Comput. Syst.}, 35(1):13--38, 2002.
\newblock
  \href{https://doi.org/10.1007/s00224-001-1007-5}{\texttt{doi:10.1007/s00224-001-1007-5}}.

\bibitem[Put18]{MR3791491}
I.~F. Putnam.
\newblock {\em Cantor minimal systems}, volume~70 of {\em University Lecture
  Series}.
\newblock American Mathematical Society, Providence, RI, 2018.
\newblock
  \href{https://doi.org/10.1090/ulect/070}{\texttt{doi:10.1090/ulect/070}}.

\bibitem[Rau82]{Ra82}
G. Rauzy.
\newblock Nombres alg\'ebriques et substitutions.
\newblock {\em Bulletin de la Soci\'et\'e Math\'ematique de France},
  110:147--178, 1982.
\newblock
  \href{http://www.numdam.org/articles/10.24033/bsmf.1957/}{\texttt{doi:10.24033/bsmf.1957}}.

\bibitem[Rig14]{rigo_formal_2014}
M. Rigo.
\newblock {\em Formal {Languages}, {Automata} and {Numeration} {Systems}}.
\newblock Wiley-ISTE, 2014.

\bibitem[RM02]{zbMATH01916667}
M. {Rigo} and A. {Maes}.
\newblock {More on generalized automatic sequences}.
\newblock {\em {J. Autom. Lang. Comb.}}, 7(3):351--376, 2002.

\bibitem[Sur18]{MR3805464}
P. Surer.
\newblock Substitutions, coding prescriptions and integer representation.
\newblock {\em J. Number Theory}, 190:367--393, 2018.
\newblock
  \href{https://doi.org/10.1016/j.jnt.2018.03.003}{\texttt{doi:10.1016/j.jnt.2018.03.003}}.

\bibitem[Sur20]{MR4143682}
P. Surer.
\newblock Substitutive number systems.
\newblock {\em Int. J. Number Theory}, 16(8):1709--1751, 2020.
\newblock
  \href{https://doi.org/10.1142/S1793042120500906}{\texttt{doi:10.1142/S1793042120500906}}.

\end{thebibliography}

\end{document}